\documentclass[11pt]{article}

\usepackage{amscd}
\usepackage{amsmath}
\usepackage{amssymb}
\usepackage{amstext}
\usepackage{amsthm}
\usepackage{bbold}
\usepackage{bm}
\usepackage{booktabs}
\usepackage{color}
\usepackage{colonequals}
\usepackage{comment}

\usepackage{easybmat}
\usepackage{etex}
\usepackage{enumitem}
\usepackage{framed}
\usepackage{authblk}
\usepackage[dvips,letterpaper,margin=1in]{geometry}
\usepackage{graphicx}
\usepackage{listings}
\usepackage{longtable}
\usepackage{mathtools}
\usepackage{multirow}
\usepackage{rotating}
\usepackage{setspace}
\usepackage{siunitx}
\usepackage{tabu}
\usepackage{verbatim}

\definecolor{cblack}{rgb}{0,0,0}
\definecolor{cblue}{rgb}{0.121569,0.466667,0.705882}    
\definecolor{corange}{rgb}{1.000000,0.498039,0.054902}  
\definecolor{cgreen}{rgb}{0.172549,0.627451,0.172549}   
\definecolor{cred}{rgb}{0.839216,0.152941,0.156863}     
\definecolor{cpurple}{rgb}{0.580392,0.403922,0.741176}  
\definecolor{cbrown}{rgb}{0.549020,0.337255,0.294118}   
\definecolor{cpink}{rgb}{0.890196,0.466667,0.760784}
\definecolor{cgray}{rgb}{0.498039,0.498039,0.498039}
\definecolor{cgreen2}{rgb}{0.7372549019607844, 0.7411764705882353, 0.13333333333333333}

\usepackage{hyperref}
\hypersetup{
  linkcolor  = cblue,
  citecolor  = cgreen,
  urlcolor   = corange,
  colorlinks = true,
}

%
%

\newtheorem{theorem}{Theorem}[section]
\newtheorem{remark}[theorem]{Remark}
\newtheorem{assumption}[theorem]{Assumption}
\newtheorem{lemma}[theorem]{Lemma}

\newtheorem{definition}[theorem]{Definition}
\newtheorem{example}[theorem]{Example}
\newtheorem{proposition}[theorem]{Proposition}
\newtheorem{corollary}[theorem]{Corollary}
\newtheorem{conjecture}[theorem]{Conjecture}
\theoremstyle{plain} 
\newcommand{\thistheoremname}{}
\newtheorem*{genericthm}{\thistheoremname}

%
%


\newcommand{\Cramer}{Cram\'{e}r}

\newcommand{\Erdos}{Erd\H{o}s}

\newcommand{\Mobius}{M\"{o}bius}

\newcommand{\Renyi}{R\'{e}nyi}


\newcommand{\what}{\widehat}


\renewcommand{\emptyset}{\varnothing}

\makeatletter
\def\moverlay{\mathpalette\mov@rlay}
\def\mov@rlay#1#2{\leavevmode\vtop{%
   \baselineskip\z@skip \lineskiplimit-\maxdimen
   \ialign{\hfil$\m@th#1##$\hfil\cr#2\crcr}}}
\newcommand{\charfusion}[3][\mathord]{
    #1{\ifx#1\mathop\vphantom{#2}\fi
        \mathpalette\mov@rlay{#2\cr#3}
      }
    \ifx#1\mathop\expandafter\displaylimits\fi}
\makeatother


\newcommand{\EE}{\mathbb{E}}

\newcommand{\NN}{\mathbb{N}}
\newcommand{\PP}{\mathbb{P}}
\newcommand{\QQ}{\mathbb{Q}}
\newcommand{\RR}{\mathbb{R}}

\newcommand{\ZZ}{\mathbb{Z}}
\DeclareSymbolFont{bbold}{U}{bbold}{m}{n}
\DeclareSymbolFontAlphabet{\mathbbold}{bbold}
\newcommand{\One}{\mathbbold{1}}



\newcommand{\bt}{\bm t}

\newcommand{\bx}{\bm x}
\newcommand{\by}{\bm y}
\newcommand{\bz}{\bm z}

\newcommand{\bF}{\bm F}

\newcommand{\bW}{\bm W}
\newcommand{\bX}{\bm X}
\newcommand{\bY}{\bm Y}


\newcommand{\one}{\bm{1}}




\newcommand{\sC}{\mathcal{C}}
\newcommand{\sD}{\mathcal{D}}

\newcommand{\sN}{\mathcal{N}}

\newcommand{\sP}{\mathcal{P}}
\newcommand{\sQ}{\mathcal{Q}}

\newcommand{\sX}{\mathcal{X}}



\DeclareSymbolFont{sfoperators}{OT1}{cmss}{m}{n}
\DeclareSymbolFontAlphabet{\mathsf}{sfoperators}
\makeatletter
\renewcommand{\operator@font}{\mathgroup\symsfoperators}
\makeatother

\DeclareMathOperator{\poly}{poly}
\DeclareMathOperator{\Tr}{Tr}

\DeclareMathOperator{\sgn}{sgn}

\DeclareMathOperator{\Unif}{Unif}

\DeclareMathOperator{\Id}{Id}
\DeclareMathOperator{\Var}{Var}

\newcommand{\Ex}{\mathop{\mathbb{E}}}  



\newcommand{\Ber}{\mathsf{Ber}}
\newcommand{\cdeg}{\mathsf{cdeg}}
\newcommand{\Avg}{\mathsf{Avg}}
\newcommand{\Adv}{\mathsf{Adv}}
\newcommand{\CAdv}{\mathsf{CAdv}}

\newcommand{\Univ}{\mathsf{Univ}}
\newcommand{\test}{\mathsf{test}}
\newcommand{\dbar}{\,\|\,}
\newcommand{\eff}{\mathsf{eff}}
\renewcommand{\epsilon}{\varepsilon}

\newcommand\numberthis{\addtocounter{equation}{1}\tag{\theequation}}

\title{Low coordinate degree algorithms I: Universality of computational thresholds for hypothesis testing}
\date{March 12, 2024}

\usepackage{authblk}
\author{Dmitriy Kunisky\thanks{Email: \textit{dmitriy.kunisky@yale.edu}. Partially supported by ONR Award N00014-20-1-2335, a Simons Investigator Award to Daniel Spielman, and NSF grants DMS-1712730 and DMS-1719545.}}
\affil{Department of Computer Science, Yale University}

\begin{document}

\maketitle

\thispagestyle{empty}

\begin{abstract}
    We study when \emph{low coordinate degree functions (LCDF)}---linear combinations of functions depending on small subsets of entries of a vector---can hypothesis test between high-dimensional probability measures.
    These functions are a generalization, proposed in Hopkins' 2018 thesis but seldom studied since, of \emph{low degree polynomials (LDP)}, a class widely used in recent literature as a proxy for all efficient algorithms for tasks in statistics and optimization.
    Instead of the orthogonal polynomial decompositions used in LDP calculations, our analysis of LCDF is based on the \emph{Efron-Stein} or \emph{ANOVA decomposition}, making it much more broadly applicable.
    By way of illustration, we prove \emph{channel universality} for the success of LCDF in testing for the presence of sufficiently ``dilute'' random signals through noisy channels: the efficacy of LCDF depends on the channel only through the scalar \emph{Fisher information} for a class of channels including nearly arbitrary additive i.i.d.\ noise and nearly arbitrary exponential families.
    As applications, we extend lower bounds against LDP for spiked matrix and tensor models under additive Gaussian noise to lower bounds against LCDF under general noisy channels.
    We also give a simple and unified treatment of the effect of \emph{censoring} models by erasing observations at random and of \emph{quantizing} models by taking the sign of the observations.
    These results are the first computational lower bounds against any large class of algorithms for all of these models when the channel is not one of a few special cases, and thereby give the first substantial evidence for the universality of several statistical-to-computational gaps.
\end{abstract}

\clearpage

\tableofcontents

\pagestyle{empty}

\clearpage

\setcounter{page}{1}
\pagestyle{plain}

\section{Introduction}

An effective theory of high-dimensional statistics requires computational as well as statistical considerations---in working with large datasets, it matters not only whether it is \emph{possible} to solve a problem with \emph{some} computation, but also whether that computation is efficient.
A rich literature has grown around understanding the difference between these two demands, including \emph{statistical-to-computational gaps} between the parameter regimes where a problem can be solved at all and those where it can be solved efficiently.
Such gaps are conjectured to occur in problems including finding weak community structure \cite{DKMZ-2011-SBM, DKMZ-2011-AsymptoticAnalysisSBM, Moore-2017-SBMReview, Abbe-2017-SBMReview} and planted cliques \cite{Jerrum-1992-LargeCliques, MPW-2015-PlantedClique, BHKKMP-2019-PlantedClique} in random graphs, various forms of principal component analysis for matrices \cite{BR-2013-SparsePCA, LKZ-2015-PhaseTransitionsSparsePCA, LKZ-2015-LowRankChannelUniversality, BMVVX-2018-InfoTheoretic, PWBM-2018-PCAI, AKJ-2018-SpikedWigner} and tensors \cite{RM-2014-TensorPCA, LMLKZ-2017-TransitionsSpikedTensor, BAGJ-2018-TensorPCALocal, JLM-2020-StatisticalTensorPCA}, and numerous other problems \cite{AMKMZ-2018-CommitteeMachineNeuralNetwork,COGHWZ-2022-PhaseTransitionsGroupTesting,KLLM-2022-StatCompGapReinforcementLearning}.
Unfortunately, it seems out of reach of current techniques to show that such problems are computationally hard under standard complexity-theoretic assumptions like $\mathsf{P} \neq \mathsf{NP}$.
Instead, many other forms of evidence of computational hardness have been proposed, including the analysis of various specific classes of algorithms (such as convex optimization and the sum-of-squares hierarchy \cite{BHKKMP-2019-PlantedClique, HKPRSS-2017-SOSSpectral, RSS-2018-EstimationSOS}, Markov chain Monte Carlo \cite{Jerrum-1992-LargeCliques, DFJ-2002-IndependentSetMCMC, CMZ-2023-LinearPlantedCliquesMetropolis, GJX-2023-PlantedCliqueMCMC}, and message-passing algorithms inspired by statistical physics \cite{ZK-2016-Review}), the study of the geometry of solution spaces and optimization landscapes \cite{ACO-2008-AlgorithmicBarriers, IKKM-2012-XORSAT, GS-2014-LimitsLocal, GM-2017-LandscapeTensorDecomposition, BAMMN-2017-LandscapeSpikedTensor, GZ-2019-LandscapePlantedClique}, and reductions among average-case problems \cite{BR-2013-SparsePCA, MW-2015-ReductionsSubmatrixDetection, BBH-2018-ReducibilityPlantedSparse, BB-2019-ReductionsSparsePCA, BB-2020-ReducibilityStatCompGaps}.

One class of computations that has come to play a central role in this pursuit is \emph{low degree polynomial (LDP)} algorithms.
LDP are simple to describe---as the name suggests, they just compute polynomials of the given data to solve a problem---and convenient to analyze, and yet seem just as powerful as other, more complicated algorithms for many settings of hypothesis testing \cite{BHKKMP-2019-PlantedClique, HKPRSS-2017-SOSSpectral, Hopkins-2018-Thesis, BKW-2019-ConstrainedPCA, KWB-2022-LowDegreeNotes, DKWB-2019-SubexponentialTimeSparsePCA, BBKMW-2020-SpectralPlantingColoring, BAHSWZ-2022-FranzParisiLowDegree, BBHLS-2020-SQLowDegree}, estimation \cite{HS-2017-BayesianEstimation, SW-2020-LowDegreeEstimation,MW-2022-AMPLowDegree}, optimization \cite{GJW-2020-LowDegreeOptimization, Wein-2020-LowDegreeIndependentSet}, and constraint satisfaction \cite{BH-2022-LowDegreeKSAT}.
However, analyzing LDP also comes with some drawbacks.
First, its scope is currently limited to data with convenient probability distributions that admit explicit families of orthogonal polynomials, which give a well-behaved basis in which to consider arbitrary LDP algorithms.
In particular, the vast majority of LDP results concern just two data distributions: Gaussian and Bernoulli.
Second, LDP analyses typically involve repetitive combinatorial calculations (arising from high-dimensional orthogonal polynomial expansions), which address computational hardness on a case-by-case basis but arguably do not grant much insight into what aspects of a problem make it costly to solve.
The LDP framework so far has yielded a large zoo of examples of hard problems and statistical-to-computational gaps, but not yet a theory to draw quick and accurate conclusions about questions like: how will this problem change if we are provided with side information? If we only make a fraction of our observations? If our data are corrupted by a different kind of noise?

In this paper, we develop tools to work with the more general class of \emph{low coordinate degree function (LCDF)} algorithms.
We will see that LCDF are amenable to a much more general theory than LDP.
In particular, we give an analysis of the performance of LCDF for a class of hypothesis testing tasks under essentially arbitrary noise models, and take initial steps towards developing rules for how computational hardness changes under simple modifications of a statistical model.

\subsection{Detecting structured latent variables}

Many models of hypothesis testing exhibiting statistical-to-computational gaps may be cast in the following general way; we will phrase all of our main results in these terms.
\begin{definition}[Continuous latent variable model]
    \label{def:clvm}
    Let $\Sigma \subseteq \RR$ contain zero, and $N \geq 1$.
    Let $\sX$ be a probability measure over $\Sigma^N$, and, for each $x \in \Sigma$, let $\sP_x$ be a probability measure over a measurable space $\Omega$.
    A \emph{continuous\footnote{The term ``continuous'' here refers to $\Sigma$ consisting of real numbers. A companion paper will examine the complementary discrete setting.} latent variable model (CLVM)} specified by these objects $(\sX, \sP)$ consists of the following pair of probability measures over $\Omega^N$:
    \begin{enumerate}
    \item Sample $\by \sim \QQ$ by sampling $y_i \sim \sP_0$ for each $i \in [N]$ independently.
    \item Sample $\by \sim \PP$ by first sampling $\bx \sim \sX$. Then, sample $y_i \sim \sP_{x_i}$ for each $i \in [N]$ independently.
    \end{enumerate}
\end{definition}
\noindent
The \emph{latent variable} is $\bx$ that is observed only indirectly through $\by$ under $\PP$.
Often one is interested in estimating $\bx$; however, we will focus here on the task of hypothesis testing between $\QQ$ and $\PP$, i.e., between $\bx = \bm 0$ and $\bx \sim \sX$, which we call \emph{detection} of the \emph{signal} $\bx \sim \sX$.
We work with the following specific notion.
\begin{definition}[Strong detection]
    Consider pairs of probability measures $\PP_n, \QQ_n$ over measurable spaces $\Omega_n$.
    We say that functions $\test_n: \Omega_n \to \{\texttt{p}, \texttt{q}\}$ achieve \emph{strong detection} if
    \begin{equation}
        \lim_{n \to \infty} \PP_n[\test_n(\by) = \texttt{q}] = \lim_{n \to \infty} \QQ_n[\test_n(\by) = \texttt{p}] = 0,
    \end{equation}
    that is, if the sequence of hypothesis tests $\test_n$ have both Type~I and Type~II error probabilities tending to zero.
    When this asymptotic setting is clear from context, we informally call the entire sequence $(\test_n)$ ``a test.''
\end{definition}
\noindent
We refer to the possibility of strong detection by an arbitrary test as \emph{statistical} feasibility of such a problem, and to the possibility of strong detection by an efficiently computable test (meaning in polynomial time in $n$, unless otherwise specified) as \emph{computational} feasibility.

We call $\sX$ the \emph{prior} (on $\bx$) and the collection of measures $\sP = (\sP_x)_{x \in \Sigma}$ the \emph{channel} through which we observe $\bx$.\footnote{Often in related literature the channel is written as a conditional probability $\sP(y \mid x)$, but our notation $\sP_x$ will be more convenient later to speak about the individual measures $\sP_x$.}
The major assumption of such a model is that $\QQ$ is a product measure and $\PP$ is a product measure after conditioning on the latent variable $\bx$.

\begin{remark}[Signal-to-noise parameters]
    \label{rem:snr}
Many models of interest also include an explicit \emph{signal-to-noise (SNR)} parameter.
This is a scalar $\lambda > 0$ that enters linearly into $\sX = \sX_{\lambda}$, so that there is some base measure $\widetilde{\sX}$ and we sample $\bx \sim \sX_{\lambda}$ by drawing $\widetilde{\bx} \sim \widetilde{\sX}$ and setting $\bx = \lambda \widetilde{\bx}$.
We abbreviate this relationsip of measures $\sX = \lambda \cdot \widetilde{\sX}$.
One then looks, over a growing sequence of priors $\widetilde{\sX}_n$ and resulting $\sX_{n, \lambda} = \lambda \cdot \widetilde{\sX}_n$, at how $\lambda = \lambda(n)$ must scale for detection to become possible as $n \to \infty$, either statistically or computationally.
\end{remark}

Two important channels, those by far most widely studied in the literature, give the following special CLVMs:
\begin{itemize}
\item \emph{Additive} or \emph{Wigner Gaussian models}, where $\Sigma = \Omega = \RR$ and $\sP_x = \sN(x, \sigma^2)$ for some $\sigma^2 > 0$.
    Under $\QQ$ we observe an i.i.d.\ Gaussian vector $\by \sim \sN(\bm 0, \sigma^2\bm I_N)$, while under $\PP$ we observe $\by = \bx + \bz$ for $\bx \sim \sX$ and $\bz \sim \sN(\bm 0, \sigma^2\bm I_N)$.
\item \emph{Bernoulli models}, where $\Sigma = [-c, 1 - c]$, $\Omega = \{0, 1\}$, and $\sP_x = \Ber(c + x)$ for some $c \in (0, 1)$.
    Under $\QQ$ we observe an i.i.d.\ vector of i.i.d.\ Bernoulli random variables $y_i \sim \Ber(c)$, while under $\PP$ we draw $\bx \sim \sX$ and observe biased Bernoulli variables $y_i \sim \Ber(c + x_i)$.
\end{itemize}
For instance, spiked (Wigner, not Wishart) matrix and tensor models are usually formulated as additive Gaussian models, and random graph models like the planted clique, planted dense subgraph, and stochastic block models as Bernoulli models.

These are not the only interesting channels: other work has considered exponential \cite{MMX-2019-PlantedMatching}, folded Gaussian \cite{CKKVZ-2010-InferenceParticleTracking}, and Poisson \cite{KN-2011-SBM} observations, for example, as well as general additive noise for continuous signals \cite{LKZ-2015-LowRankChannelUniversality, KXZ-2016-MutualInformationChannelUniversality, PWBM-2018-PCAI,MRY-2018-AdaptiveMatrixDenoising} and general observation models for discrete signals \cite{LMX-2015-LabelledSBM, BBH-2019-UniversalityComputationalSubmatrixDetection, DWXY-2021-PlantedMatchingInfiniteOrder}.
Yet, some of our most useful tools are restricted to Gaussian and Bernoulli channels.
We next review some of these tools for understanding the computational power of LDP for detection in CLVMs through these special channels.

\subsection{Low degree algorithms}

The simple yet powerful class of \emph{low degree polynomial~(LDP)} algorithms seek to solve a detection problem by computing a low degree polynomial of $\by$ whose typical value is different under $\by \sim \PP$ and $\by \sim \QQ$.
The following quantity, which we follow some recent work like \cite{RSWY-2022-CountCommunitiesLowDegree} in calling the \emph{advantage}, gives a simple measurement of how well LDP can ``separate'' $\PP$ and $\QQ$.\footnote{A perhaps more natural notion of separation, treating $\PP$ and $\QQ$ symmetrically, is given in \cite{BAHSWZ-2022-FranzParisiLowDegree,RSWY-2022-CountCommunitiesLowDegree}. As detailed there, bounding the advantage is a means of excluding this kind of separation. We could apply this alternative framing to LCDF as well, but we do not pursue that direction here.}

\begin{equation}
    \Adv_{\leq D}(\sX, \sP) \colonequals \left\{\begin{array}{ll} \text{maximize} & \Ex_{\by \sim \PP} f(\by) \\ \text{subject to} & \Ex_{\by \sim \QQ} f(\by)^2 \leq 1, \\ & f \in \RR[\by]_{\leq D} \end{array}\right\}. \label{eq:advantage}
\end{equation}

The following rough conjecture, which originates in a line of work on sum-of-squares optimization \cite{BHKKMP-2019-PlantedClique, HS-2017-BayesianEstimation, HKPRSS-2017-SOSSpectral, Hopkins-2018-Thesis}, expresses the idea that polynomials are optimal efficient tests for a given runtime budget.
It is imprecise, and various pathological cases have been devised to refute the conjecture without some additional clauses \cite{HW-2021-CounterexamplesLowDegree, Kunisky-2020-LowDegreeMorris, KM-2021-ReconstructionTreesLowDegree, DK-2022-ComponentAnalysisLatticeBasis, ZSWB-2022-LatticeMethodsSOS}, but for many problems it gives predictions of hardness that match all other evidence.
\begin{conjecture}[Informal low degree conjecture]
    \label{conj:low-deg}
    Consider a sequence of CLVMs $(\sX_n, \sP)$, giving rise to sequences of measures $\PP_n$ and $\QQ_n$ on $\Omega^N$ for some $N = N(n)$.
    For ``sufficiently nice'' priors and channels (see Chapter 2 of \cite{Hopkins-2018-Thesis} for some discussion of how these conditions might be formalized), we conjecture:
    \begin{enumerate}
    \item If, for some $D = D(n) = \omega(\log n)$, $\Adv_{\leq D}(\sX_n, \sP) = O(1)$ as $n \to \infty$, then there is no test computable in time polynomial in $n$ that achieves strong detection between $\PP_n$ and $\QQ_n$.
    \item If, for some $D = D(n) = \omega(1)$, $\Adv_{\leq D}(\sX_n, \sP) = O(1)$ as $n \to \infty$, then there is no test computable in time $\exp(D / \mathsf{polylog}(n))$ that achieves strong detection between $\PP_n$ and $\QQ_n$.
    \end{enumerate}
\end{conjecture}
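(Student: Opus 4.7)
This statement is presented as an informal conjecture rather than a theorem, and the excerpt itself acknowledges that pathological counterexamples exist (\cite{HW-2021-CounterexamplesLowDegree, Kunisky-2020-LowDegreeMorris, KM-2021-ReconstructionTreesLowDegree, DK-2022-ComponentAnalysisLatticeBasis, ZSWB-2022-LatticeMethodsSOS}) without additional ``niceness'' hypotheses on the CLVM.  Accordingly, my plan is not to produce an unconditional proof but to sketch the heuristic behind the conjecture and locate where the unavoidable assumptions must enter.

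The guiding principle is a \emph{polynomial approximation heuristic}: for ``sufficiently generic'' CLVMs, any function $t_n : \Omega^N \to \{0,1\}$ computable in time $T = T(n)$ can be approximated in $L^2(\QQ_n)$ by a polynomial $p_n$ of degree $O(\log T)$.  Granting this for $T = n^{O(1)}$ yields degree $O(\log n)$, which sits below any $D = \omega(\log n)$; granting it for $T = \exp(D/\mathsf{polylog}(n))$ yields degree $O(D/\mathsf{polylog}(n))$, which sits below $D$.  In either case the approximating polynomial fits within the degree budget constrained by the advantage hypothesis.  Heuristically one justifies such an approximation by modeling the algorithm as a bounded-depth arithmetic or decision circuit and replacing each gate by its low-degree Hermite/Fourier expansion, with the depth of the simulation bounded by $\log T$.

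Given the approximation, the second step converts the advantage bound $\Adv_{\leq D}(\sX_n, \sP) = O(1)$ into impossibility of strong detection by $t_n$ in a standard way.  If $t_n$ achieves strong detection then $\Ex_{\QQ_n}[t_n^2] = \QQ_n[t_n = 1] \to 0$ while $\Ex_{\PP_n}[t_n] \to 1$.  Transferring these limits to the polynomial $p_n$ (exactly under $\QQ_n$ by the $L^2(\QQ_n)$ approximation, and under $\PP_n$ via a contiguity argument justified by bounded low-degree chi-squared) would yield $\Ex_{\QQ_n}[p_n^2] = o(1)$ and $\Ex_{\PP_n}[p_n] = 1 - o(1)$.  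But the very definition of the advantage forces $\Ex_{\PP_n}[p_n] \leq \Adv_{\leq D}(\sX_n, \sP) \cdot \sqrt{\Ex_{\QQ_n}[p_n^2]} = O(1) \cdot o(1) = o(1)$, a contradiction.

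The principal obstacle is the first step: the polynomial approximation heuristic is unproven in full generality and, as the cited counterexamples show, genuinely \emph{false} for certain algorithmic classes, notably those exploiting lattice reduction or other finely tuned arithmetic on $\by$.  A rigorous version of the conjecture must therefore either (a) restrict the channel $\sP_x$ so that $\by$ carries enough fresh randomness to wash out such pathological subroutines, or (b) restrict the algorithmic class to natural families (spectral methods, message-passing, local iteration, sum-of-squares at bounded degree, and so on) for which low-degree approximation can be verified case-by-case.  Formulating the right ``niceness'' hypothesis is the central open problem here, and it is precisely this difficulty that motivates the author's pivot from LDP to the broader class of LCDF analyzed in the rest of the paper, whose Efron--Stein-based analysis no longer depends on orthogonal polynomial structure at all.
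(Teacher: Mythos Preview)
The statement is a \emph{conjecture}, and the paper does not supply a proof of it; it is stated as motivation for the low-degree framework and immediately followed by the caveat that counterexamples are known absent further ``niceness'' hypotheses. There is therefore no ``paper's own proof'' to compare against. You correctly recognized this and sensibly offered a heuristic sketch rather than a formal argument; the two-step outline (time-$T$ algorithms $\leadsto$ degree-$O(\log T)$ polynomials, then a contradiction via the advantage inequality) is the standard folklore narrative and is appropriate context.

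One small interpretive correction: your final paragraph attributes the paper's move from LDP to LCDF to the difficulty of proving Conjecture~\ref{conj:low-deg}. That is not the paper's stated motivation. The shift to LCDF is driven by wanting results that do not depend on explicit orthogonal-polynomial bases for $\QQ$ (so that arbitrary channels, not just Gaussian or Bernoulli, can be handled), not by any hope of making the low-degree conjecture more provable. Indeed, the paper continues to \emph{assume} Conjecture~\ref{conj:low-deg} when drawing computational-hardness conclusions from bounds on $\CAdv_{\leq D}$.
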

\noindent
The second case is meant to allow for the study of subexponential time algorithms when we take $D = n^{\delta}$ for some $\delta \in (0, 1)$ (see, e.g., \cite{DKWB-2019-SubexponentialTimeSparsePCA}).

Conversely, the \emph{unboundedness} of the advantage also seems to have computational consequences: if, e.g., for $D = (\log n)^{K}$ we have $\Adv_{\leq D}(\sX_n, \sP) = \omega(1)$, then we expect there to be an algorithm running in quasipolynomial time $n^D = \exp(\mathsf{polylog}(n))$ that distinguishes $\PP$ from $\QQ$, namely, computing and thresholding the polynomial that is the optimizer in the advantage \eqref{eq:advantage}.
This, too, is not always true, but for ``nice'' problems and when we are considering $D = \mathsf{polylog}(n)$ it does seem to be accurate; see Section 4 of \cite{KWB-2022-LowDegreeNotes} for discussion.
Thus, technicalities aside, the bottom line that we will take as our motivation going forward is that \textbf{the boundedness or divergence of the advantage appears to govern the computational hardness of strong detection.}

The advantage may be analyzed quite directly in some models, because the optimal $f$ may be characterized as the orthogonal projection in $L^2(\QQ)$ of the likelihood ratio $d\PP / d\QQ$ to the subspace of polynomials of degree at most $D$, the \emph{low degree likelihood ratio (LDLR)} (see, e.g., Proposition~1.15 of \cite{KWB-2022-LowDegreeNotes}).
The advantage itself is the norm in $L^2(\QQ)$ of the LDLR.
Beyond that general characterization, we also have the following elegant computation specifically for additive Gaussian models.

\begin{proposition}[Additive Gaussian advantage; Theorem 2.6 of \cite{KWB-2022-LowDegreeNotes}]
    \label{prop:gaussian-additive}
    For any prior $\sX$ and $\sigma^2 > 0$,
    \begin{equation}
        \Adv_{\leq D}(\sX, \sN(x, \sigma^2))^2 = \Ex_{\bx^{(1)}, \bx^{(2)} \sim \sX} \exp^{\leq D}\left(\frac{1}{\sigma^2} \langle \bx^{(1)}, \bx^{(2)} \rangle\right),
        \label{eq:gauss-adv}
    \end{equation}
    where $\bx^1$ and $\bx^2$ are independent draws from $\sX$ and $\exp^{\leq D}$ is the truncated Taylor series of the exponential function,
    \begin{equation}
        \exp^{\leq D}(t) \colonequals \sum_{d = 0}^D \frac{t^d}{d!}.
    \end{equation}
\end{proposition}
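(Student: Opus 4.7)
The plan is to use the characterization of the advantage as an $L^2(\QQ)$-norm of a projection of the likelihood ratio, together with the Hermite polynomial basis, which is the system of orthogonal polynomials for the Gaussian measure $\QQ = \sN(\bm 0, \sigma^2 \bm I_N)$. First I would argue, via Cauchy--Schwarz applied to \eqref{eq:advantage} (rewriting $\Ex_{\by \sim \PP}f(\by) = \langle f, L\rangle_{L^2(\QQ)}$ with $L \colonequals d\PP/d\QQ$), that
\begin{equation}
    \Adv_{\leq D}(\sX, \sN(x, \sigma^2))^2 = \|P_{\leq D} L\|_{L^2(\QQ)}^2,
\end{equation}
where $P_{\leq D}$ is orthogonal projection onto polynomials of degree at most $D$. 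This reduces the problem to computing the Hermite coefficients of $L$ through degree $D$.

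Let $(h_\alpha)_{\alpha \in \NN^N}$ denote the tensor Hermite polynomials orthonormal under $\QQ$, normalized so that the one-dimensional generating function reads
\begin{equation}
    \exp\!\Bigl(\tfrac{xy}{\sigma^2} - \tfrac{x^2}{2\sigma^2}\Bigr) = \sum_{k \geq 0} \frac{(x/\sigma)^k}{\sqrt{k!}}\, h_k(y).
\end{equation}
Since $L(\by) = \Ex_{\bx \sim \sX}\prod_{i=1}^N \exp(x_iy_i/\sigma^2 - x_i^2/(2\sigma^2))$, taking products across coordinates and exchanging the sum with the expectation over $\bx$ yields the Hermite expansion
\begin{equation}
    L(\by) = \sum_{\alpha \in \NN^N} \frac{\Ex_{\bx \sim \sX}(\bx/\sigma)^\alpha}{\sqrt{\alpha!}}\, h_\alpha(\by),
\end{equation}
so by Parseval $\Adv_{\leq D}^2 = \sum_{|\alpha| \leq D}\alpha!^{-1}\bigl(\Ex_{\bx}(\bx/\sigma)^\alpha\bigr)^2$.

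Finally I would write $(\Ex_{\bx}(\bx/\sigma)^\alpha)^2 = \Ex_{\bx^{(1)}, \bx^{(2)}}(\bx^{(1)}/\sigma)^\alpha(\bx^{(2)}/\sigma)^\alpha$ using independent replicas, exchange with the finite sum, and apply the multinomial identity $\sum_{|\alpha|=d}\frac{1}{\alpha!}\ba^\alpha \bb^\alpha = \frac{1}{d!}\langle \ba, \bb\rangle^d$ to collapse each homogeneous layer. Summing $d = 0, \ldots, D$ then produces precisely $\exp^{\leq D}(\langle \bx^{(1)}, \bx^{(2)}\rangle/\sigma^2)$, which is the claim.

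The only substantive point of care is ensuring $L \in L^2(\QQ)$ so that the Hermite series converges in $L^2$ and Parseval applies; this is implicit in the finiteness of the right-hand side of \eqref{eq:gauss-adv}, and for unbounded priors one can first truncate $\sX$ to bounded support and pass to a monotone limit. Everything else is a direct calculation with the Hermite generating function and the multinomial theorem, so I do not anticipate any serious obstacle beyond bookkeeping of normalizations.
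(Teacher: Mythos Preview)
Your proposal is correct and follows precisely the approach the paper indicates: the paper does not supply its own proof of this proposition but cites \cite{KWB-2022-LowDegreeNotes} and remarks that ``the proof explicitly decomposes the LDLR in orthogonal polynomials in $L^2(\QQ)$, which, since $\QQ$ is a Gaussian product measure, are the multivariate Hermite polynomials.'' Your argument---Hermite generating function, Parseval, replica trick, multinomial collapse---is exactly this decomposition carried out in detail.
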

\noindent
The proof explicitly decomposes the LDLR in orthogonal polynomials in $L^2(\QQ)$, which, since $\QQ$ is a Gaussian product measure, are the multivariate Hermite polynomials.

This formula is remarkable in how nicely the roles of the three parameters $D, \sX$, and $\sP$ separate: $D$ determines the truncation of the exponential power series, $\sX$ determines the distribution of $\langle \bx^{(1)}, \bx^{(2)} \rangle$, and $\sP$ determines the variance $\sigma^2$.
Especially remarkable is that the advantage---\emph{a priori} a function of the high-dimensional measure $\sX$---only depends on the distribution of the \emph{scalar} inner product $\langle \bx^{(1)}, \bx^{(2)}\rangle$, for which we borrow from statistical physics the term \emph{overlap}.
This dramatically reduces the dimensionality of the computation of the advantage, which is both formally intriguing and practically useful for proving LDP lower bounds and other results (see, e.g., the arguments of \cite{BKW-2019-ConstrainedPCA,KWB-2022-LowDegreeNotes,BBKMW-2020-SpectralPlantingColoring, BAHSWZ-2022-FranzParisiLowDegree}).
We might view the Gaussian additive channel $\sN(x, \sigma^2)$ as leading to an \emph{integrable} CLVM, where we can give a compact closed form for the advantage using the special structure of the Hermite polynomials.

A few further channels, including Bernoulli, have been found to lead to likewise integrable CLVMs which can be directly compared to Gaussian models (see Section~\ref{sec:related}).
These findings, however, are still very brittle: with these tools we still cannot say anything about the performance of LDP, for example, for a channel applying additive noise with a density given by a slight perturbation of the Gaussian density, though it is intuitively obvious that this should not change the computational cost of detection very much.

\subsection{Low coordinate degree algorithms}

We now introduce the \emph{low coordinate degree functions (LCDF)} that will generalize LDP.
In words, while LDP are linear combinations of low degree monomials---products of entries in a small number of coordinates of a vector---LCDF are linear combinations of \emph{arbitrary functions} of entries in a small number of coordinates.

For $\by \in \Omega^N$ and $T \subseteq [N]$, we write $\by_T \in \Omega^T$ for the restriction of $\by$ to the coordinates in $T$.
We define subspaces of $L^2(\QQ)$
\begin{align}
  V_T &\colonequals \{f \in L^2(\QQ): f(\by) \text{ depends only on } \by_T\}, \\
        V_{\leq D} &\colonequals \sum_{\substack{T \subseteq [N] \\ |T| \leq D}} V_T.
\end{align}
Here a sum of subspaces $V + W$ is the set of all $v + w$ with $v \in V$ and $w \in W$. Note that $V_{[N]} = V_{\leq N} = L^2(\QQ)$.
\begin{definition}[Coordinate degree]
    For $f \in L^2(\QQ)$, $\cdeg(f) \colonequals \min\{D: f \in V_{\leq D}\}$.
\end{definition}
\noindent
For $f$ a polynomial we have $\deg(f) \leq \cdeg(f)$, so $\RR[\by]_{\leq D} \subseteq V_{\leq D}$, and LCDF are indeed a larger class than LDP for a given degree bound $D$.
We note also that, unlike polynomial degree, all $L^2$ functions have a finite coordinate degree.

This broader class was proposed in Hopkins' thesis \cite{Hopkins-2018-Thesis}, where it was suggested as a more natural collection of statistics than LDP, in particular one that is closed under arbitrary entrywise transformations.
Yet, to the best of our knowledge, LCDF have only been studied a few times subsequently in some special cases \cite{BBHLS-2020-SQLowDegree, KM-2021-ReconstructionTreesLowDegree, HM-2024-LowDegreeBroadcastingTrees}.

Paralleling the LDP framework, we define the \emph{coordinate advantage}
\begin{equation}
    \CAdv_{\leq D}(\sX, \sP) \colonequals \left\{\begin{array}{ll} \text{maximize} & \Ex_{\by \sim \PP} f(\by) \\ \text{subject to} & \Ex_{\by \sim \QQ} f(\by)^2 \leq 1, \\ & f \in V_{\leq D} \end{array}\right\} \geq \Adv_{\leq D}(\sX, \sP).
\end{equation}
As for LDP, the optimal $f$ is the orthogonal projection of the likelihood ratio to $V_{\leq D}$, which we call the \emph{low coordinate degree likelihood ratio (LCDLR)}, and the coordinate advantage is the norm of this projection.

We call $\Adv_{\leq D}$ the \emph{polynomial advantage} when we want to emphasize the distinction between it and $\CAdv_{\leq D}$.
Because of the above inequality, bounding the coordinate advantage also bounds the polynomial advantage and thus, conditional on Conjecture~\ref{conj:low-deg}, shows computational hardness of strong detection.\footnote{One may of course also formulate a ``low coordinate degree conjecture'' paralleling Conjecture~\ref{conj:low-deg}, for a version of which the reader may consult \cite{Hopkins-2018-Thesis}.}
Alternatively, one may view bounding the coordinate advantage as just a lower bound against LCDF, which are an interesting class of algorithms, at least as powerful as LDP, and perhaps, as Hopkins proposed, more natural.
And conversely, as for LDP, the divergence of the coordinate advantage reasonably suggests that there should be an LCDF-based algorithm achieving strong detection.

\subsection{Summary of contributions}

We present informal summaries of our main results before giving precise statements in Section~\ref{sec:results}.

\paragraph{General theory on universality}
The following summarizes what our general results for arbitrary CLVMs will imply.
We indicate ``in quotation marks'' the vague terms to be clarified later.
\begin{theorem}[Informal]
    \label{thm:informal}
    The coordinate advantage is the same up to constants for a ``sufficiently dilute'' prior observed through any channel having a given Fisher information (a scalar parameter) that either (1) is addition of i.i.d.\ noise with a ``sufficiently nice'' distribution (including any symmetric, positive, and smooth density on $\RR$), or (2) makes observations in a ``sufficiently nice'' exponential family (including that generated by any subgaussian base measure).
\end{theorem}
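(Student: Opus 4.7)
The plan is to recast $\CAdv_{\leq D}(\sX, \sP)$ into a form that isolates the scalar channel dependence, then Taylor-expand to identify Fisher information as the unique surviving parameter. Since $\QQ = \sP_0^{\otimes N}$ is a product measure, one has the orthogonal Efron--Stein decomposition $L^2(\QQ) = \bigoplus_{T} W_T$, where $W_T$ consists of functions of $\by_T$ that vanish in conditional mean in each coordinate of $T$, and correspondingly $V_{\leq D} = \bigoplus_{|T| \leq D} W_T$. Writing $\bar L_x(y) \colonequals d\sP_x/d\sP_0(y) - 1$ (so that $\Ex_{y \sim \sP_0} \bar L_x(y) = 0$), the telescoping identity
\[
\frac{d\PP}{d\QQ}(\by) = \Ex_{\bx \sim \sX} \prod_{i=1}^{N} \big(1 + \bar L_{x_i}(y_i)\big) = \sum_{T \subseteq [N]} \Ex_{\bx \sim \sX} \prod_{i \in T} \bar L_{x_i}(y_i)
\]
exhibits the ANOVA decomposition of the likelihood ratio, with the $T$-th summand lying in $W_T$. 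Projecting onto $V_{\leq D}$ and taking $L^2(\QQ)$ norms then yields the master formula
\[
\CAdv_{\leq D}(\sX, \sP)^2 = \sum_{|T| \leq D} \Ex_{\bx, \bx' \sim \sX} \prod_{i \in T} \kappa(x_i, x'_i), \qquad \kappa(x, x') \colonequals \Ex_{y \sim \sP_0} \bar L_x(y)\, \bar L_{x'}(y),
\]
in which all channel dependence is carried by the scalar kernel $\kappa$, and each summand is manifestly nonnegative.

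The universality claim then reduces to the statement that $\kappa(x, x') = \sI(\sP)\, x x' + \rho(x, x')$, where $\sI(\sP) \colonequals \Ex_{y \sim \sP_0} s(y)^2$ is the Fisher information at $x = 0$, $s(y) \colonequals \partial_x \log(d\sP_x/d\sP_0)(y)\big|_{x=0}$ is the score, and $\rho$ is a remainder that is small on the dilute scale. Under standard regularity of $x \mapsto \sP_x$, the expansion $\bar L_x(y) = x\, s(y) + x^2 r(x, y)$ gives this form with $|\rho(x, x')|$ controlled by $(|x|^3|x'| + |x||x'|^3 + x^2 x'^2)$ times $L^2(\sP_0)$-moments of $s$ and its derivatives. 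Substituting and multiplying out $\prod_{i \in T}\big(\sI(\sP) x_i x_i' + \rho(x_i, x_i')\big)$, the leading ``Fisher'' term $\sI(\sP)^{|T|} \Ex \prod_i x_i x_i'$ depends on $\sP$ only through $\sI(\sP)$ and matches the Gaussian additive expansion of Proposition~\ref{prop:gaussian-additive} with $\sigma^{-2} = \sI(\sP)$; the mixed and pure remainder terms must then be shown to form an arbitrarily small correction under the diluteness hypothesis on $\sX$.

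The Taylor bounds needed for the two channel classes follow from channel-specific regularity. For additive i.i.d.\ noise with density $p$, $\bar L_x(y) = p(y - x)/p(y) - 1$, so symmetry, positivity, and smoothness of $p$ together with appropriate decay and integrability of $p'/p$ and $p''/p$ deliver uniform pointwise bounds on $\bar L_x$ and $\rho(x, x')$ for $x$ in the support of $\sX$. For the exponential family $d\sP_x/d\sP_0(y) = \exp\big(T(y)\eta(x) - A(\eta(x))\big)$, the score and its higher derivatives in $x$ reduce to centered polynomials in the sufficient statistic $T(y)$; subgaussianity of $T(y)$ under $\sP_0$ then supplies moment bounds $\Ex |s|^k \leq (Ck)^{k/2}$ that dominate the remainder uniformly in $x$ on the relevant scale.

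The principal obstacle is propagating these pointwise estimates on $\kappa$ through the sum over $T$, whose size can grow with $n$ up to $D$. Because $\rho(x, x')$ has no definite sign, a simple pointwise sandwich of $\kappa$ between two multiples of $\sI xx'$ is unavailable, and one cannot monotonize the summation; instead one expands the product over $i \in T$ into $2^{|T|}$ mixed terms and controls each against the prior. A careful formalisation of ``sufficiently dilute'' is what makes this bookkeeping close: one needs every replacement of a Fisher factor $\sI x_i x_i'$ by a remainder $\rho(x_i, x_i')$ to cost a uniformly small multiplicative factor even after summation over $|T|$ up to $D$, which roughly translates to requiring that typical $|x|^2$ under $\sX$ times the score-derivative moments is small relative to $\sI(\sP)$. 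With that condition in hand, the advantages for any two admissible channels of the same Fisher information agree up to constants, establishing Theorem~\ref{thm:informal}.
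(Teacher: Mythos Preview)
Your master formula via Efron--Stein and the identification of $\kappa(x,x')$ (the paper's $R_{\sP}$) with $F_{\sP}\,xx'$ plus a fourth-order remainder are exactly the paper's starting point. The divergence is in how the remainder is propagated through the sum over $T$, which you rightly flag as the principal obstacle.

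Your plan is to expand $\prod_{i\in T}(F_{\sP}\,x_ix_i' + \rho_i)$ into $2^{|T|}$ cross terms and argue that each $\rho$-substitution costs a uniformly small factor. This is hard to close as stated: since $\rho$ is signless, the cross terms are not termwise dominated by the leading one, and for many $T$ the leading contribution $(\Ex_{\bx}\prod_{i\in T}x_i)^2$ can vanish while cross terms need not; summing $2^{|T|}$ absolute values over all $|T|\leq D$ has no obvious bound in terms of the nonnegative Fisher sum. The paper sidesteps this by a different maneuver. It first uses the positivity $\Ex_{\bx,\bx'}\prod_i R_{\sP}(x_i,x_i')^{k_i}\geq 0$ (each such moment is a squared expectation) to pass from the combinatorial sum $\sum_{|T|\leq D}\prod_{i\in T}R_i$ to the analytic upper bound $\exp^{\leq D}\big(\sum_i R_i\big)$. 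The Taylor step is then applied to the \emph{scalar argument}: under the prior assumptions one gets $\sum_i R_i = F_{\sP}\langle\bx,\bx'\rangle + \Delta$ with $|\Delta|=O(1)$, and an approximate exponential law $\exp^{\leq D}(a+b)\leq\exp^{\leq D}(a)\exp(C|b|)$ for the truncated polynomial converts that $O(1)$ additive shift into a multiplicative constant. For the matching lower bound under dilution, a direct binomial-coefficient estimate shows that the passage to $\exp^{\leq D}$ becomes tight from below once the dilution $k\geq D^2$. This packaging never confronts the $2^{|T|}$ expansion at all.

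A smaller gap: your fourth-order bound on $\rho$ implicitly assumes the mixed third derivative $\partial^3\kappa/\partial x^2\partial x'(0,0)$ vanishes. The paper isolates this as an explicit channel hypothesis (its Assumption~C2), verified by parity for symmetric additive noise and by a direct calculation in the mean parametrization for exponential families; you should make explicit where this enters your two channel classes.
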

\noindent
When Theorem~\ref{thm:informal} applies, the coordinate advantage is also close to the polynomial advantage for the additive Gaussian channel, for which we have the formula of Proposition~\ref{prop:gaussian-additive} and numerous prior results.
This existing analysis of additive Gaussian models therefore generalizes through our results ``for free'' to much more general noise models.

\paragraph{Applications}
In this way, we will show the universality of two of the best-known statistical-to-computational gaps in high-dimensional hypothesis testing: those in spiked matrix \cite{Johnstone-2001-LargestEigenvaluePCA} and spiked tensor models \cite{RM-2014-TensorPCA}.
In brief: we consider, for $q \geq 2$, $\lambda = \lambda(n) \geq 0$, and $\bx \in \RR^n$ a suitably scaled random vector, distinguishing a symmetric tensor of i.i.d.\ random variables from one to which the rank one ``signal'' tensor $\lambda \bx^{\otimes q}$ has been added.
In both the matrix case $q = 2$ and the tensor case $q \geq 3$, the computational difficulty of the problem depends on $\lambda$.
In the matrix case, there is a ``sharp'' critical $\lambda_{\mathsf{comp}}$ such that for $\lambda < \lambda_{\mathsf{comp}}$ strong detection is believed to require nearly exponential time in $n$, while for $\lambda > \lambda_{\mathsf{comp}}$ a simple algorithm achieves strong detection.
In the tensor case, there is a ``smoother'' transition between the easy and hard regimes, where there is a range of values of $\lambda_{\mathsf{poly}} < \lambda < \lambda_{\mathsf{exp}}$ for which strong detection is believed to require subexponential time $\exp(O(n^{\delta}))$ for $\delta \in (0, 1)$ depending on $\lambda$.\footnote{In either case there is also a statistical threshold, an even smaller $\lambda_{\mathsf{stat}}$ below which strong detection is impossible.}
In both cases, previous work gave evidence for these claims under Gaussian noise by analyzing LDP \cite{HKPRSS-2017-SOSSpectral, Hopkins-2018-Thesis, KWB-2022-LowDegreeNotes}.
We give a twofold generalization, identifying the computationally hard regimes of $\lambda$ for the broader class of LCDF and for nearly arbitrary additive noise.\footnote{We also discuss in Remarks~\ref{rem:sbm} and \ref{rem:censored-sbm} how suitable choices of exponential family channels yield lower bounds for certain formulations of the stochastic block model.}
Notably, these results follow nearly automatically from our general results applied to the above prior work.

\paragraph{Channel calculus}
Lastly, we begin to develop a ``calculus'' that seeks to understand how general modifications of the noisy channel affect the difficulty of hypothesis testing.
We consider two operations applied after the noisy channel in a CLVM:
\begin{enumerate}
\item In \emph{censorship}, meant to describe ``missing data'' in statistical parlance, each coordinate $y_i$ of our observation is replaced with a null symbol ``$\bullet$'' independently with probability $\eta \in [0, 1]$.
\item In \emph{quantization}, meant to describe observations with low numerical precision, each coordinate $y_i$ of our observation is replaced with $\sgn(y_i)$.
\end{enumerate}
Each may be seen as transforming the channel $\sP$ into a new channel $\sP^{\prime}$.
We compute the Fisher information following either transformation, which, per Theorem~\ref{thm:informal}, describes how the computational cost of strong detection changes.
As a consequence, in the spiked matrix and tensor models, we obtain---again nearly automatically---predictions of computational thresholds with accompanying lower bounds against LCDF under arbitrary noise and when a constant fraction $\eta$ of the observations are withheld or when the observations are quantized.

\subsection{Related work}
\label{sec:related}

\paragraph{Universality in probability theory}
The paradigm of studying a class of models by first thoroughly understanding integrable models that allow for exact algebraic computations and then appealing to universality phenomena that relate general models to integrable ones is common in probability theory.
For example, one may study general random walks on $\RR$ through the simple random walk on $\ZZ$ \cite{Donsker-1951-InvarianceLimitTheorems}, general random matrices through Gaussian random matrices like the Gaussian orthogonal ensemble \cite{ESY-2011-UniversalityRelaxationFlow}, and general surface growth models through ones with special combinatorial structure like the asymmetric simple exclusion process \cite{Corwin-2012-KPZEquationUniversality}.
Our aim here is to initiate such a program for low degree algorithms.
It is an interesting and important question to understand universality for other classes of algorithms and computations as well, which has in various senses been considered by recent works like \cite{DT-2019-UniversalityNumericalComputation,WZF-2022-UniversalityAMP,DLS-2023-UniversalityAMPSemirandom}.

\paragraph{Further integrable channels}
Similar calculations to those underlying Proposition~\ref{prop:gaussian-additive} are also possible for Bernoulli channels, using the Boolean Fourier basis, the orthogonal polynomials for $\Ber(\frac{1}{2})$, to give a closed form for the advantage.
The author's work \cite{Kunisky-2020-LowDegreeMorris} extended this kind of calculation to other channels where $\sP_x$ belong to a special class of exponential families, such as binomial, geometric, Poisson, and exponential, again using algebraic properties of the orthogonal polynomials for those families.
That work (as well as Appendix~B of \cite{BBKMW-2020-SpectralPlantingColoring} for the Bernoulli case) also gave comparison inequalities relating the advantage under such channels to the Gaussian advantage.
Our results may be seen as a major generalization of those, which still rely heavily on algebraic structure and thus only apply to a few integrable channels.

\paragraph{Channel universality}
Universality for statistical tasks with respect to the output channel has also been studied before.
The special case of spiked matrix models was studied by \cite{LKZ-2015-LowRankChannelUniversality,KXZ-2016-MutualInformationChannelUniversality,PWBM-2018-PCAI}, who noticed the central role of the Fisher information.
However, these results considered either statistical analysis or special algorithms like computing matrix eigenvalues and approximate message passing.
Ours is the first result in this direction to show channel universality of the behavior of a broad class of algorithms; perhaps our main conceptual observation is that the analysis of low degree algorithms is formally similar enough to a ``truncation'' of the statistical analysis that some key technical ideas of the above works still apply.
Taking a different approach, \cite{BBH-2019-UniversalityComputationalSubmatrixDetection} argued for channel universality for the particular problem of submatrix detection by considering reductions between different output channels.
This gives results applying to \emph{all} polynomial time algorithms, but is restricted to one specific class of priors $\sX$ specifying the submatrix detection problem, while our results allow for a broad range of $\sX$.

\paragraph{Low coordinate degree algorithms}
Since their proposal in Hopkins' thesis \cite{Hopkins-2018-Thesis}, LCDF have made a few further appearances.
In \cite{KM-2021-ReconstructionTreesLowDegree, HM-2024-LowDegreeBroadcastingTrees}, they were used to formulate LDP when the output of a channel is categorical, taking discrete values in some finite set of labels, $y_i \in \{\ell_1, \dots, \ell_k\}$.
In this case, LCDF and LDP are actually equivalent, so long as LDP are taken over a ``one-hot'' encoding of the $y_i$ as $(\One\{y_i = \ell_j\})_{i \in [N], j \in [k]}$ (see our Example~\ref{ex:boolean-fourier} and discussion throughout \cite{KM-2021-ReconstructionTreesLowDegree}).
In a different vein, \cite{BBHLS-2020-SQLowDegree} drew a connection between LCDF and the \emph{statistical query} model of hypothesis testing, where an algorithm is allowed a budget of arbitrary queries of a probability measure.
This work considered LCDF with a very ``coarse'' notion of coordinate, where one observes, e.g., many weakly informative samples of a random vector, each of which counts as one coordinate of the entire observation.
This is a very special case of the general latent variable model (without the ``continuity'' assumption) we give in Definition~\ref{def:lvm} where $\PP$ and $\QQ$ are both product measures, albeit over a higher-dimensional domain.

\paragraph{Efron-Stein decomposition}
In statistics and analysis, the idea of our technique for projecting the likelihood ratio to LCDF has variously been referred to as \emph{Efron-Stein, analysis of variance~(ANOVA),} or \emph{Hoeffding decomposition}.
It is presented, for instance, in Section~8.3 of \cite{ODonnell-2014-AnalysisBooleanFunctions}.
Its use in statistical applications is that a refinement of the subspaces $V_T$ that we present in Appendix~\ref{sec:coord-general} may be used to compute a measurement of the ``collective contribution from the coordinate set $T$'' to the total variance of the random variable $f(\by)$ for $\by \sim \QQ$, which underpins the ANOVA methodology.
In that language, our approach may be described as performing an ANOVA analysis of the likelihood ratio itself, studying how much of its variance is ``due to'' interactions among small numbers of coordinates of our observations.
The idea of projecting complicated functions to subspaces of functions of low coordinate degree has also been systematically explored for applications in physical sciences, especially in computational chemistry, under the name of \emph{high-dimensional model representation~(HDMR)}.
See \cite{LRR-2001-HDMR} for a general survey and \cite{RA-1999-FoundationsHDMR} for a more mathematical survey.

\paragraph{Channel calculus}
The effect of composing noisy channels has been much studied in information theory, though the Fisher information is a less common quantity in that literature.
Still, as for other more common measurements of a channel's ``fidelity'' like the mutual information, the Fisher information is known to satisfy a data processing inequality (so that, e.g., censorship or quantization cannot increase the Fisher information) and a chain rule \cite{Zamir-1998-FisherInformationDataProcessing}.

\section{Notation}

We write $[N] \colonequals \{1, \dots, N\}$.
The asymptotic notations $o(\cdot), O(\cdot), \omega(\cdot), \Omega(\cdot), \asymp$ have their usual meanings in the limit $N \to \infty$ or $n \to \infty$ when the variable $n$ is defined in context.
We write $\one_k \in \RR^k$ for the vector all of whose entries equal 1 and $\bm I_k \in \RR^{k \times k}$ for the $k \times k$ identity matrix.
For $\by$ an $N$-dimensional vector and $T \subseteq [N]$, we write $\by_T$ for the restriction to the index set $T$.
We write $\sN(\mu, \sigma^2)$ and $\sN(\bm\mu, \bm\Sigma)$ for the scalar and vector Gaussian measures with given mean and variance (respectively, covariance matrix) parameters.
We write $\delta_x$ for the Dirac probability measure at $x$, and $\Ber(c) = (1 - c)\delta_0 + c\delta_1$ for the Bernoulli measure.
This notation for mixtures will not be confused with $c \cdot \sX$, which denotes the measure formed by sampling from $\sX$ and then multiplying by $c$.
We adopt the slightly informal notation of denoting a channel $\sP = (\sP_x)$ by one of the above notations where $x$ is viewed as a ``dummy'' variable, so that, e.g., $\Adv_{\leq D}(\sX, \sN(x, \sigma^2)) = \Adv_{\leq D}(\sX, \sP)$ where $\sP_x = \sN(x, \sigma^2)$.

\section{Main results}
\label{sec:results}

\subsection{General theory}

Our first main result computes the coordinate advantage in closed form and gives a ``nonlinear overlap'' bound on it for any CLVM.\footnote{This may be viewed as a generalization of the bound derived in Appendix~B.1 of \cite{BBKMW-2020-SpectralPlantingColoring} of the Bernoulli advantage by the Gaussian advantage.}

\begin{definition}[Good CLVM]
    We call a CLVM \emph{good} if $\Sigma$ contains an open interval around zero, and, for all $x \in \Sigma$, $\sP_{x}$ is absolutely continuous with respect to $\sP_0$ and $d\sP_{x} / d\sP_0 \in L^2(\sP_0)$.
\end{definition}

\begin{definition}[Channel overlap]
    For each $x^{(1)}, x^{(2)} \in \Sigma$ in a good CLVM, define
    \begin{equation}
        R_{\sP}(x^{(1)}, x^{(2)}) \colonequals \Ex_{y \sim \sP_0}\left[\left(\frac{d\sP_{x^{(1)}}}{d\sP_0}(y) - 1\right)\left(\frac{d\sP_{x^{(2)}}}{d\sP_0}(y) - 1\right)\right].
    \end{equation}
\end{definition}

\noindent
The following examples are some simple cases of this function.
Note that near the origin each behaves, up to rescaling, like $x^{(1)}x^{(2)}$, which is the key phenomenon we will exploit later.

\begin{example}
    \label{ex:R-gauss}
    If $\sP_x = \sN(x, 1)$, then $R_{\sP}(x^{(1)}, x^{(2)}) = \exp(x^{(1)}x^{(2)}) - 1$.
\end{example}

\begin{example}
    \label{ex:R-ber}
    If $\sP_x = \Ber(\frac{1}{2} + x)$, then $R_{\sP}(x^{(1)}, x^{(2)}) = 4x^{(1)}x^{(2)}$.
\end{example}
\noindent
One should think of $R_{\sP}$ as a kind of ``kernel'' associated to a channel that contains all of the data about the channel that will be relevant to us.
In probabilistic terms, $R_{\sP}(x^{(1)}, x^{(2)})$ is the covariance between the likelihood ratios of $\sP_{x^{(1)}}$ and $\sP_{x^{(2)}}$ with respect to $\sP_0$, and thus is a measurement of the similarity between $\sP_{x^{(1)}}$ and $\sP_{x^{(2)}}$.
The diagonal values give the $\chi^2$ divergence (Definition~\ref{def:chi-squared}) between $\sP_x$ and $\sP_0$, $R_{\sP}(x, x) = \chi^2(\sP_x \dbar \sP_0)$.

\begin{theorem}[Coordinate advantage]
    \label{thm:lvm}
    Suppose $(\sX, \sP)$ is a good CLVM.
    Then,
    \begin{align}
      \CAdv_{\leq D}(\sX, \sP)^2
      &= \Ex_{\bx^{(1)}, \bx^{(2)} \sim \sX} \sum_{\substack{T \subseteq [N] \\ |T| \leq D}} \prod_{i \in T} R_{\sP}(x_i^{(1)}, x_i^{(2)}) \\
      &\leq \Ex_{\bx^{(1)}, \bx^{(2)} \sim \sX} \exp^{\leq D}\left(\sum_{i = 1}^NR_{\sP}(x_i^{(1)}, x_i^{(2)}) \right). \label{eq:lvm-lcdlr-exp}
    \end{align}
\end{theorem}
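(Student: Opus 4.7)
The plan is to identify $\CAdv_{\leq D}(\sX, \sP)^2$ as the squared $L^2(\QQ)$-norm of the orthogonal projection of the likelihood ratio $L := d\PP/d\QQ$ onto $V_{\leq D}$, then compute that projection explicitly via an Efron-Stein style expansion. Writing $g_x(y) := \frac{d\sP_x}{d\sP_0}(y) - 1 \in L^2(\sP_0)$, which has mean zero under $\sP_0$, the product structure of $\PP$ conditional on $\bx$ together with $\QQ = \sP_0^{\otimes N}$ gives
\begin{equation*}
L(\by) \;=\; \Ex_{\bx \sim \sX}\prod_{i=1}^N \bigl(1 + g_{x_i}(y_i)\bigr) \;=\; \sum_{T \subseteq [N]} \Ex_{\bx \sim \sX}\prod_{i \in T} g_{x_i}(y_i).
\end{equation*}

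I would next argue that the truncation $L^{\leq D}(\by) := \sum_{|T|\leq D} \Ex_{\bx}\prod_{i\in T} g_{x_i}(y_i)$ is exactly the orthogonal projection of $L$ onto $V_{\leq D}$. Membership in $V_{\leq D}$ is automatic since each summand depends on only $|T|\leq D$ coordinates. For orthogonality, fix any $f\in V_{\leq D}$ depending on $\by_S$ with $|S|\leq D$ and any $T$ with $|T|>D$; then $T\not\subseteq S$, and picking $i_0\in T\setminus S$ and factoring the $L^2(\QQ)$ inner product over coordinates under $\QQ = \sP_0^{\otimes N}$, the factor at $i_0$ is the zero-mean expectation $\Ex_{y_{i_0}\sim\sP_0}[g_{x_{i_0}}(y_{i_0})] = 0$. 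Squaring, $\|L^{\leq D}\|^2_{L^2(\QQ)}$ expands as a double sum over $(S,T)$ of such inner products; the same vanishing kills all $S\neq T$ cross terms (pick $i_0\in S\triangle T$), while on the diagonal $S=T$ the inner product factors into $\prod_{i\in T}\langle g_{x_i^{(1)}}, g_{x_i^{(2)}}\rangle_{L^2(\sP_0)} = \prod_{i\in T} R_{\sP}(x_i^{(1)}, x_i^{(2)})$, yielding the stated equality.

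For the inequality, the pointwise bound $\sum_{d\leq D} e_d(r) \leq \exp^{\leq D}(\sum_i r_i)$ fails in general since $R_{\sP}$ can be negative, so the argument must exploit the outer expectation. I would expand $\frac{1}{d!}(\sum_i r_i)^d$ by the multinomial theorem and subtract $e_d(r)$, leaving a sum over multi-indices $\bmmu$ with $|\bmmu|=d$ and some $\mu_i\geq 2$ of terms $\frac{1}{\prod_i \mu_i!}\prod_i r_i^{\mu_i}$. Each has nonnegative expectation by the positive-definite-kernel structure of $R_{\sP}$: using the tensor identity $R_{\sP}(x,x')^{\mu} = \langle g_x^{\otimes \mu}, g_{x'}^{\otimes \mu}\rangle_{L^2(\sP_0^{\otimes \mu})}$ to realize $\prod_i R_{\sP}(x_i^{(1)}, x_i^{(2)})^{\mu_i}$ as $\langle \bigotimes_i g_{x_i^{(1)}}^{\otimes\mu_i}, \bigotimes_i g_{x_i^{(2)}}^{\otimes\mu_i}\rangle$, the independence of $\bx^{(1)}, \bx^{(2)}\sim\sX$ converts the expectation into $\bigl\|\Ex_{\bx \sim \sX}\bigotimes_i g_{x_i}^{\otimes\mu_i}\bigr\|^2 \geq 0$. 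The main obstacle is bookkeeping rather than conceptual: one must justify the Fubini interchanges of $\Ex_{\bx}$ with $L^2(\QQ)$ inner products and with the sum over $T$ under only the $L^2(\sP_0)$-integrability afforded by goodness and for an arbitrary (possibly correlated) joint law $\sX$ on $\Sigma^N$, which should follow from Cauchy-Schwarz applied with the product structure of $\QQ$.
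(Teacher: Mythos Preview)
Your proposal is correct and follows essentially the same approach as the paper. The paper develops the Efron-Stein decomposition $L^2(\QQ)=\bigoplus_T \widehat V_T$ abstractly and then identifies $\widehat L_T = \Ex_{\bx}\prod_{i\in T}(L_{x_i}(y_i)-1)$ as the projection onto $\widehat V_T$, whereas you bypass that framework and verify the orthogonality of these pieces directly via the zero-mean factor $g_{x_{i_0}}$; for the inequality, your tensor-power realization of $\prod_i R_\sP^{\mu_i}$ as a squared Hilbert-space norm is exactly the content of the paper's Lemma on nonnegativity of $\Ex\prod_i R_i^{k_i}$, just phrased in inner-product rather than replica language.
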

\noindent
We can actually treat a more general class of models than CLVMs as defined here, which we leave to Theorem~\ref{thm:lvm-general}.

The resemblance to Proposition~\ref{prop:gaussian-additive} is clear, with the two differences that $x_i^{(1)}x_i^{(2)}$ is replaced by the nonlinear $R_{\sP}(x_i^{(1)}, x_i^{(2)})$, and that the expression in \eqref{eq:lvm-lcdlr-exp} is only a \emph{bound} on the coordinate advantage rather than a \emph{formula} for it.
But, we will show that, in many cases, neither of these differences is very consequential, and for many channels the coordinate advantage actually behaves just like the polynomial advantage for a Gaussian channel.

We first address the matter of the nonlinear overlap.
In fact, we show that the summation in \eqref{eq:lvm-lcdlr-exp} in many cases may be replaced by an overlap formula of precisely the form that arose for the polynomial advantage for additive Gaussian models in Proposition~\ref{prop:gaussian-additive}.
The channel then enters into the bound not through the potentially complicated nonlinear $R_{\sP}$, but through the scalar \emph{Fisher information}, which describes $R_{\sP}$ near the origin (it is just the constant scaling $x^{(1)}x^{(2)}$ in the approximation alluded to for Examples~\ref{ex:R-gauss} and \ref{ex:R-ber}).
The reciprocal of the Fisher information plays the role of an ``effective $\sigma^2$'' in the formula for the additive Gaussian advantage.
Let us give a clearer name to the expression that will appear, to avoid confusion from mixing polynomial and coordinate advantages:
\begin{equation}
    \Univ_{\leq D}(\sX, \sigma^2) \colonequals \Ex_{\bx^{(1)}, \bx^{(2)}} \exp^{\leq D}\left(\frac{1}{\sigma^2}\langle \bx^{(1)}, \bx^{(2)}\rangle\right) = \Adv_{\leq D}(\sX, \sN(x, \sigma^2))^2.
\end{equation}
We also delineate the assumptions on the prior and channel that will play important roles.
When we refer to the constants $A, B_{k}$ later, they will always refer to the constants in these assumptions.
\begin{assumption}[Prior assumptions]
    \label{ass:prior}
    We define conditions on a prior $\sX$, parametrized by constants $A, B_2, B_4, B_6, B_8, B_{10}, B_{12} > 0$:
    \begin{enumerate}
    \item[P1.] $\|\bx\|_{\infty} \leq A$.
    \item[P2.] For each $k \in \{2, 4, 6\}$, $\|\bx\|_k \leq B_kN^{\frac{1}{k} - \frac{1}{4}}$.
    \item[P3.] For each $k \in \{8, 10, 12\}$, $\|\bx\|_k \leq B_kN^{\frac{1}{k} - \frac{1}{4}}$.
    \end{enumerate}
\end{assumption}

\begin{assumption}[Channel assumptions]
    \label{ass:channel}
    We define conditions on a channel $\sP$, parametrized by a constant $A > 0$ (always the same as $A$ in Assumption~\ref{ass:prior}):
    \begin{enumerate}
    \item[C1.] $R_{\sP}(x^{(1)}, x^{(2)})$ is $\sC^4$ in an open set containing $[-A, A]^2$.
    \item[C2.] $\frac{\partial^3R_{\sP}}{\partial x^{(1)^2} \partial x^{(2)}}(0, 0) = 0$.
    \end{enumerate}
\end{assumption}

\begin{definition}[Fisher information]
    For a channel $\sP$ satisfying Assumption C1, its \emph{Fisher information} is
    \begin{equation}
        F_{\sP} \colonequals \frac{\partial^2R_{\sP}}{\partial x^{(1)} \partial x^{(2)}}(0, 0).
    \end{equation}
\end{definition}
\noindent
The Fisher information in statistics is usually viewed as a function of the signal $x$, $F_{\sP}(x)$, and in this language what we work with is $F_{\sP} = F_{\sP}(0)$ (see also Proposition~\ref{prop:fisher-info-equiv}).
Its values for the Gaussian and Bernoulli channels from Examples~\ref{ex:R-gauss} and \ref{ex:R-ber} are 1 and 4, respectively.

\begin{theorem}[Loose channel universality]
    \label{thm:channel-universality}
    Let $(\sX, \sP)$ be a good CLVM.
    Suppose that $\sP$ satisfies Assumptions~C1 and C2, and $\sX$ satisfies Assumptions~P1 and P2.
    Then, there is a constant $C_1 > 0$ depending only on the channel $\sP$ and the constants $(A, B_2, B_4, B_6)$ in the Assumptions such that, for all $D \geq 0$ even,
    \begin{equation}
        \CAdv_{\leq D}(\sX, \sP)^2 \leq C_1\,\Univ_{\leq D}(\sX, 1 / F_{\sP}). \label{eq:cadv-upper}
    \end{equation}
    Suppose further that $\sX$ satisfies Assumption~P3.
    Then, there are also constants $C_2, C_3 > 0$ depending only on the channel $\sP$ and on the constants $(A, B_2, B_4, B_6, B_8, B_{10}, B_{12})$ such that, for all $D \geq 0$ even,
    \begin{equation}
      \CAdv_{\leq D}(\sX, \sP)^2 \geq C_2\,\Univ_{\leq D}(\sX, 1 / F_{\sP}) - C_3\,\Univ_{\leq D - 2}(\sX, 1 / F_{\sP}). \label{eq:cadv-lower}
    \end{equation}
\end{theorem}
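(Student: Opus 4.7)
The plan is to begin with the exact expression for $\CAdv_{\leq D}(\sX, \sP)^2$ provided by Theorem~\ref{thm:lvm}, then to replace $R_\sP$ by its bilinear approximation $F_\sP x^{(1)} x^{(2)}$ via Taylor expansion around $(0, 0)$, and finally to use a symmetric-function identity to convert the resulting elementary-symmetric expression into the ``power-sum'' form that defines $\Univ_{\leq D}(\sX, 1/F_\sP)$. Prior moment control (P2, and additionally P3 for the lower bound) ultimately bounds all the discrepancies in these two steps.

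For the Taylor step, $R_\sP$ vanishes on both coordinate axes because $\EE_{y \sim \sP_0}[d\sP_x/d\sP_0(y) - 1] = 0$ for each $x$, so the factorization $R_\sP(u, v) = uv \cdot g(u, v)$ holds with $g$ of class $\sC^2$ on a neighborhood of $[-A, A]^2$ by C1. A short partial-derivative calculation identifies $g(0, 0) = F_\sP$, and C2 together with the symmetry $R_\sP(u, v) = R_\sP(v, u)$ forces $\nabla g(0, 0) = \zero$, giving the uniform estimate
\[
|R_\sP(u, v) - F_\sP uv| \leq C_R |uv|(u^2 + v^2) \qquad \text{on } [-A, A]^2.
\]
Writing $a_i \colonequals F_\sP x_i^{(1)} x_i^{(2)}$ and $E_i \colonequals R_\sP(x_i^{(1)}, x_i^{(2)}) - a_i$, I would expand $\prod_{i \in T}(a_i + E_i) = \sum_{S \subseteq T} \prod_{i \in T \setminus S} a_i \prod_{i \in S} E_i$ and, summing over $|T| \leq D$, split $\CAdv^2$ into a ``main'' part $M \colonequals \EE \sum_{d=0}^D e_d(\ba)$ and a collection of ``Taylor-error'' contributions indexed by nonempty $S$.

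The comparison of $M$ to $\Univ_{\leq D} = \EE \sum_{d=0}^D p_1(\ba)^d/d!$ rests on the partition identity
\[
\frac{p_1(\ba)^d}{d!} = e_d(\ba) + \sum_{\substack{\lambda \vdash d \\ \lambda \neq 1^d}} \frac{1}{z_\lambda} \sum_{\substack{j_1, \dots, j_{\ell(\lambda)} \in [N] \\ \text{distinct}}} \prod_s a_{j_s}^{\lambda_s},
\]
whose ``correction'' term ranges over partitions with at least one part of size $\geq 2$. After taking $\EE$ and using the independence $\bx^{(1)} \perp \bx^{(2)}$ to factor $\EE \prod_s (x_{j_s}^{(1)} x_{j_s}^{(2)})^{\lambda_s} = (\EE_\sX \prod_s x_{j_s}^{\lambda_s})^2$, Cauchy-Schwarz against the $\|\bx\|_k$-norms controlled by P2 (resp. P3) shows each correction fits inside a constant times $\Univ_{\leq D - 2}$, the shift reflecting that each repeated part of size $\geq 2$ trades one distinct index for two extra powers of $x$. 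The Taylor-error terms from $S \neq \emptyset$ drop into the same framework: the bound on $|E_i|$ contributes two extra powers of $x$ per substitution, placing them as partition-type sums with effective degree lowered by $2$, again bounded by $O(\Univ_{\leq D - 2})$. Collecting gives $\CAdv^2 = M + O(\Univ_{\leq D - 2})$, and since $M$ itself equals $\Univ_{\leq D} + O(\Univ_{\leq D - 2})$, both the upper bound $\CAdv^2 \leq C_1 \Univ_{\leq D}$ and the lower bound $\CAdv^2 \geq C_2 \Univ_{\leq D} - C_3 \Univ_{\leq D - 2}$ follow.

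The main obstacle will be the combinatorial bookkeeping that makes the partition corrections and the Taylor-error contributions fit as constant multiples of $\Univ_{\leq D - 2}$. The upper bound is easier because signs can be absorbed by triangle inequalities and P2 alone then controls the relevant moments; the lower bound must preserve signs of the $E_i$ and of the various partition corrections, which is why the stronger assumption P3 (extending moment bounds through $k = 12$) is needed: quartic Taylor remainders combined with the deepest partition corrections introduce $x$-monomials of total degree up to $12$ that must be dominated by a small fraction of $\Univ_{\leq D}$. The evenness of $D$ enters to ensure a uniform sign pattern in the truncated-exponential partial sums, so that the residual error contributions cleanly collapse to a single $\Univ_{\leq D - 2}$ correction rather than leaving a net positive ``overshoot'' at the top degree.
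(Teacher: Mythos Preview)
Your overall structure (Taylor-expand $R_\sP$ around the origin, compare elementary symmetric polynomials of the $a_i = F_\sP x_i^{(1)}x_i^{(2)}$ to the power sums defining $\Univ_{\leq D}$) matches the paper's in spirit, but the execution you propose is genuinely different from the paper's and contains a real gap.

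The paper does \emph{not} expand $R_i = a_i + E_i$ term by term. It works ``macroscopically'': from the same Taylor estimate it deduces the almost-sure scalar bound
\[
\Big|\sum_{i=1}^N R_\sP(x_i^{(1)},x_i^{(2)}) - F_\sP\langle \bx^{(1)},\bx^{(2)}\rangle\Big| \le C,
\]
and then invokes a single analytic inequality for truncated exponentials, $\exp^{\leq D}(x+y)\le \exp^{\leq D}(x)\exp(100|y|)$ for even $D$ (Proposition~\ref{prop:trunc-exp}), to turn the $\exp^{\leq D}$ upper bound of Theorem~\ref{thm:lvm} into $C_1\,\Univ_{\leq D}$. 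For the lower bound it proves, via a coefficientwise comparison and Lemma~\ref{lem:non-neg}, that
\[
\Ex\Big[\exp^{\leq D}\big(\textstyle\sum_i R_i\big) - \big(\textstyle\sum_i R_i^2\big)\exp^{\leq D-2}\big(\textstyle\sum_i R_i\big)\Big] \le \CAdv_{\leq D}^2,
\]
then bounds $\sum_i R_i^2$ by a constant (this is where P3 enters) and applies Proposition~\ref{prop:trunc-exp} twice more. No partition combinatorics are needed.

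The gap in your plan is the handling of the Taylor-error contributions $\sum_{|T|\le D}\sum_{\emptyset\neq S\subseteq T}\prod_{T\setminus S}a_i\prod_S E_i$. You write that ``signs can be absorbed by triangle inequalities,'' but this is exactly what fails: replacing $\prod_{T\setminus S} a_i$ by $\prod_{T\setminus S}|a_i|$ destroys the cancellation in $\langle\bx^{(1)},\bx^{(2)}\rangle$ and forces you to control $\sum_{|U|\le D-1}\prod_U |a_i|\le \exp^{\leq D-1}(\sum_i|a_i|)$, where $\sum_i |a_i|\le F_\sP\|\bx^{(1)}\|_2\|\bx^{(2)}\|_2\asymp N^{1/2}$ under P2, not $O(1)$. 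The resulting bound is of order $N^{(D-1)/2}$, not a constant multiple of $\Univ_{\leq D-2}$. Nor do these error terms ``drop into the same framework'' as the partition corrections: the latter are expectations of products of pure powers $a_{j_s}^{\lambda_s}$, which factor over $\bx^{(1)},\bx^{(2)}$ as squares and are nonnegative (Lemma~\ref{lem:non-neg}), whereas $E_i$ mixes $x_i^{(1)}$ and $x_i^{(2)}$ in a way that does not factor, so you cannot invoke nonnegativity to replace absolute values with signed sums. To rescue your microscopic expansion you would essentially need to re-sum it back into $\exp^{\leq D}(\sum_i R_i)$ and then prove Proposition~\ref{prop:trunc-exp} anyway; the paper simply isolates that inequality as the key lemma from the start.
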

\noindent
To understand the result, first recall that, for studying hardness of detection, we are interested in understanding boundedness or divergence of $\CAdv_{\leq D}$ over a sequence of CLVMs.
The upper and lower bounds in \eqref{eq:cadv-upper} and \eqref{eq:cadv-lower} will match up to constants for $D = \omega(1)$ so long as $\Univ_{\leq D}(\sX, 1 / F_{\sP})$ either is bounded (so that the upper and lower bounds are both constant) or grows at a sufficiently fast exponential rate in $D$ (so that the second term in \eqref{eq:cadv-lower} becomes negligible and we find $\CAdv_{\leq D}(\sX, \sP)^2 \asymp \Univ_{\leq D}(\sX, 1 / F_{\sP})$).
Because of this ``sufficiently fast'' clause, this result gives universality of detection thresholds only up to constant factors in an SNR parameter (see Remark~\ref{rem:snr}; roughly speaking, the actual exponential growth of $\Univ_{\leq D}(\sX, 1 / F_{\sP})$ is usually at a rate proportional to how much greater the SNR is than the threshold for detection), which is why we call the result ``loose.''
For a special class of priors $\sX$, we will later give a tight characterization of the coordinate advantage up to constants, thus giving true universality of computational thresholds; however, this more \emph{ad hoc} bound is useful to treat arbitrary priors.

Let us parse the conditions of the Theorem and give some remarks on its applicability.
Assumptions P1--P3 on the prior may be viewed as asking that the typical value of $|x_i|$ is at most $N^{-1/4}$, allowing for a small number of outliers.
This is a natural scaling, occurring for instance in low-rank spiked matrix models (see Corollary~\ref{cor:non-gaussian-smm} and Section~\ref{sec:spiked-mx}).
In Example~\ref{ex:non-universality} we give an illustration that some assumption to this effect is necessary for universality to hold.

\begin{remark}[Truncating priors]
    \label{rem:trunc-prior}
    By a common trick, used for example in \cite{PWBM-2018-PCAI,BKW-2019-ConstrainedPCA}, it suffices, if we are assuming Conjecture~\ref{conj:low-deg} and reasoning about a sequence of problems $(\sX_n, \sP)$ over a fixed channel, to have the Assumptions~P1--P3 hold with high probability as $n \to \infty$.
    This is because we may define an adjusted prior $\widetilde{\sX}_n$ where we sample $\widetilde{\bx} \sim \widetilde{\sX}_n$ as $\widetilde{\bx} = \bx \One\{E\}$ for $\bx \sim \sX$ and $E$ the event that P1 and P2 hold.
    Then, if we prove that strong detection is hard for $\widetilde{\sX}_n$, we automatically learn the same for $\sX_n$, since the two agree with high probability.
\end{remark}

The conditions C1 and C2 on the channel are harder to parse, but let us demonstrate how mild they are.
The following two results show that nearly arbitrary additive noise channels and nearly arbitrary channels where observations are made in an exponential family satisfy these conditions.
The result for additive noise is similar in spirit to results of \cite{PWBM-2018-PCAI}, and the one for exponential families to those of \cite{Kunisky-2020-LowDegreeMorris}, though in both cases our results have more general consequences.
We give the proofs---straightforward verifications of the Assumptions---in Appendices~\ref{sec:pf:prop:channel-additive} and~\ref{sec:pf:prop:channel-exponential}.

\begin{proposition}[Additive noise channels]
    \label{prop:channel-additive}
    Suppose that $\sP$ is a channel such that, for all $x \in \Sigma$, $\sP_x$ is the law of $x + z$ for $z \sim \rho$ with $\rho$ a probability measure on $\RR$ having a density $p(y)$ with respect to Lebesgue measure that satisfies:
    \begin{enumerate}
    \item $p(y) > 0$ for all $y \in \RR$.
    \item $p(y)$ is $\sC^4$ on all of $\RR$.
    \item $p(y) = p(-y)$ for all $y \in \RR$.
    \end{enumerate}
    Then, Assumptions C1 (for arbitrary choice of the constant $A$) and C2 are satisfied.
    The Fisher information is given in terms of $p$ by
    \begin{equation}
        F_{\sP} = \int_{-\infty}^{\infty} \frac{p^{\prime}(y)^2}{p(y)} dy.
    \end{equation}
\end{proposition}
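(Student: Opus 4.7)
The plan is to express $R_{\sP}$ in a concrete closed form and then verify each Assumption by explicit differentiation under the integral sign.

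\emph{Step 1: Closed form for $R_{\sP}$.} For the additive channel, the Radon--Nikodym derivative is $\frac{d\sP_x}{d\sP_0}(y) = p(y-x)/p(y)$. Plugging this into the definition of $R_{\sP}$, expanding the product, and using $\int p(y-x)\,dy = 1$ gives
\begin{equation*}
R_{\sP}(x^{(1)}, x^{(2)}) = \int_\RR \frac{p(y - x^{(1)})\, p(y - x^{(2)})}{p(y)}\, dy - 1.
\end{equation*}

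\emph{Step 2: Establish C1 by differentiation under the integral.} I would show, using dominated convergence with a suitable majorant (obtained by applying Cauchy--Schwarz to reduce to bounds of the form $\int p^{(j)}(y-x)^2/p(y)\,dy < \infty$ uniformly on $[-A,A]$, where the $j=0$ case is the $\chi^2$ divergence and hence finite by goodness of the CLVM), that for all $j,k\geq 0$ with $j+k\leq 4$,
\begin{equation*}
\frac{\partial^{j+k} R_{\sP}}{\partial (x^{(1)})^j \partial (x^{(2)})^k}(x^{(1)}, x^{(2)}) = (-1)^{j+k} \int_\RR \frac{p^{(j)}(y - x^{(1)})\, p^{(k)}(y - x^{(2)})}{p(y)}\, dy,
\end{equation*}
with each integral continuous in $(x^{(1)}, x^{(2)})$ on an open neighborhood of $[-A,A]^2$. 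Since $p$ is $\sC^4$, this yields that $R_{\sP}$ is $\sC^4$, giving C1.

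\emph{Step 3: Establish C2 by parity.} Applying the formula from Step 2 at $(0,0)$ with $(j,k) = (2,1)$ gives
\begin{equation*}
\frac{\partial^3 R_{\sP}}{\partial (x^{(1)})^2 \partial x^{(2)}}(0,0) = -\int_\RR \frac{p''(y)\, p'(y)}{p(y)}\, dy.
\end{equation*}
Because $p$ is even, $p'$ is odd and $p''$ is even, so the integrand is odd and the integral vanishes, establishing C2.

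\emph{Step 4: Fisher information.} Applying Step 2 with $(j,k) = (1,1)$ yields immediately
\begin{equation*}
F_{\sP} = \frac{\partial^2 R_{\sP}}{\partial x^{(1)} \partial x^{(2)}}(0,0) = \int_\RR \frac{p'(y)^2}{p(y)}\, dy,
\end{equation*}
which is the classical Fisher information of the location family generated by $\rho$.

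The only substantive obstacle is Step 2: producing integrable majorants for the differentiated integrand uniformly in $(x^{(1)}, x^{(2)}) \in [-A,A]^2$, so that dominated convergence applies and each mixed partial derivative is absolutely convergent and continuous. The remaining steps are short calculations, with C2 dropping out essentially for free from the evenness hypothesis on $p$.
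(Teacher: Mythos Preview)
Your proposal is correct and follows essentially the same approach as the paper: write $R_{\sP}(x^{(1)},x^{(2)}) = -1 + \int p(y-x^{(1)})p(y-x^{(2)})/p(y)\,dy$, differentiate under the integral to verify C1, use the odd-integrand parity argument for C2, and read off the Fisher information from the $(1,1)$ mixed partial. If anything, you are more careful than the paper, which simply asserts that $L_x(y)$ being $\sC^4$ in $x$ implies C1 without discussing the dominated-convergence majorant you flag in Step~2.
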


\begin{proposition}[Exponential family channels]
    \label{prop:channel-exponential}
    Suppose that $\sP_0$ has mean $\mu$ and variance $\sigma^2 > 0$, and, for all $x \in \Sigma$, $\sP_x$ belongs to the natural exponential family generated by $\sP_0$ (see Definition~\ref{def:exp-family}) and has $x = \EE_{y \sim \sP_x}[y] - \mu$.
    Suppose also that $\sP_0$ has a cumulant generating function $\psi(\theta) \colonequals \log \EE_{y \sim \sP_0} \exp(\theta y)$ that satisfies:
    \begin{enumerate}
    \item $\psi$ is $\sC^5$ in an open set $U$ containing zero.
    \item $\psi^{\prime}(U)$ contains an open set $U^{\prime} \supset [-A + \mu, A + \mu]$.
    \end{enumerate}
    Then, Assumptions C1 and C2 are satisfied, where $A$ is the same as the constant in Assumption~C1.
    The Fisher information is
    \begin{equation}
        F_{\sP} = \frac{1}{\sigma^2}.
    \end{equation}
\end{proposition}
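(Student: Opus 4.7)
The plan is to use the natural parameterization of the exponential family to obtain a closed form for $R_\sP$, then verify the assumptions by direct computation. Let $\theta(x)$ denote the natural parameter for which $d\sP_x/d\sP_0(y) = \exp(\theta(x) y - \psi(\theta(x)))$; the mean condition $\EE_{y \sim \sP_x}[y] = \mu + x$ combined with the exponential family identity $\EE_{y \sim \sP_\theta}[y] = \psi'(\theta)$ forces $\psi'(\theta(x)) = x + \mu$, so $\theta = (\psi')^{-1}(\,\cdot\,+\mu)$. Since $\psi$ is $\sC^5$ on $U$ with $\psi''(0) = \sigma^2 > 0$, the inverse function theorem together with the hypothesis $\psi'(U) \supset U' \supset [-A+\mu, A+\mu]$ shows that $\theta$ is well-defined and $\sC^4$ on an open neighborhood of $[-A,A]$, with $\theta(0)=0$.

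Substituting the density formulas into the definition of $R_\sP$ and recognizing the expectation as the MGF of $\sP_0$ evaluated at $\theta_1 + \theta_2$ yields
\begin{equation}
R_\sP(x^{(1)}, x^{(2)}) = \exp\bigl(\psi(\theta_1 + \theta_2) - \psi(\theta_1) - \psi(\theta_2)\bigr) - 1, \qquad \theta_i \colonequals \theta(x^{(i)}).
\end{equation}
Assumption C1 then follows because this is a composition of $\sC^4$ functions on an open neighborhood of $[-A,A]^2$; the one slightly delicate point is verifying that $\theta_1 + \theta_2$ remains in the domain of smoothness of $\psi$, which holds on a neighborhood of $(0,0)$ since $\theta(0) = 0$ and may be extended to all of $[-A,A]^2$ by shrinking $U$ if necessary.

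For the Fisher information and Assumption C2, I would Taylor expand at the origin. Writing $S \colonequals \psi(\theta_1 + \theta_2) - \psi(\theta_1) - \psi(\theta_2)$, one checks that $S(0,0) = 0$ (using $\psi(0) = 0$) and that $S_{x^{(1)}}(0,0) = S_{x^{(2)}}(0,0) = S_{x^{(1)} x^{(1)}}(0,0) = 0$, since each of these partials is a sum of terms containing a difference $\psi^{(k)}(\theta_1+\theta_2) - \psi^{(k)}(\theta_i)$ that vanishes at $\theta_1 = \theta_2 = 0$. The first nonvanishing term is $S_{x^{(1)} x^{(2)}}(0,0) = \theta'(0)^2\,\psi''(0) = 1/\sigma^2$, where $\theta'(0) = 1/\psi''(0) = 1/\sigma^2$ is obtained from implicit differentiation of $\psi'(\theta(x)) = x + \mu$. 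Applying the chain rule to $R_\sP = e^S - 1$ then gives $F_\sP = R_{\sP, x^{(1)} x^{(2)}}(0,0) = 1/\sigma^2$.

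For C2, the vanishing of these lower-order derivatives of $S$ reduces the claim $R_{\sP, x^{(1)} x^{(1)} x^{(2)}}(0,0) = 0$ to $S_{x^{(1)} x^{(1)} x^{(2)}}(0,0) = 0$. A second round of implicit differentiation of $\psi'(\theta(x)) = x + \mu$ yields $\theta''(0) = -\psi'''(0)/\sigma^6$, and substituting produces the cancellation
\begin{equation}
S_{x^{(1)} x^{(1)} x^{(2)}}(0,0) = \theta''(0)\,\psi''(0)\,\theta'(0) + \theta'(0)^3\,\psi'''(0) = -\frac{\psi'''(0)}{\sigma^6} + \frac{\psi'''(0)}{\sigma^6} = 0.
\end{equation}
The main obstacle is simply bookkeeping—the C2 cancellation is an algebraic consequence of the mean parameterization and falls out once the derivatives of $\theta$ are expressed in terms of those of $\psi$.
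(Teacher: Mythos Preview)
Your proof is correct and follows essentially the same route as the paper: both derive the closed form $R_\sP = \exp\bigl(\psi(\theta_1+\theta_2)-\psi(\theta_1)-\psi(\theta_2)\bigr)-1$, compute $\theta'(0)=1/\sigma^2$ and $\theta''(0)=-\psi'''(0)/\sigma^6$ by implicit differentiation, and verify C2 via the cancellation $-\psi'''(0)/\sigma^6+\psi'''(0)/\sigma^6=0$. Your choice to work with $S=\log(R_\sP+1)$ and then pass to $R_\sP$ by the chain rule is a mild organizational improvement over the paper, which carries the factor $K_\sP=e^S$ through the successive differentiations of $R_\sP$ directly.
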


\begin{example}
    If $\EE_{y \sim \sP_0}\exp(\theta y) < \infty$ for all $\theta \in \RR$, e.g.\ if $\sP_0$ is subgaussian (with any variance proxy), then $\psi$ is automatically smooth on all of $\RR$ and all three conditions are satisfied.
\end{example}

\begin{remark}[Difference-of-means parametrization]
    The condition $x = \EE_{y \sim \sP_x}[y] - \mu$ says that not only do the $\sP_x$ belong to an exponential family, but also that they are specifically parametrized by the displacement of their means from that of $\sP_0$.
    This is important to our calculations in Section~\ref{sec:pf:prop:channel-exponential} and to an ``overlap formula'' holding, and is not original; the idea that the ``right'' overlap controlling exponential family channels is an overlap of $z$-scores (as we obtain if we move the factor of $F_{\sP} = 1/\sigma^2$ inside of $\langle \bx^1, \bx^2 \rangle$ in Theorem~\ref{thm:channel-universality}) appeared already in \cite{Kunisky-2020-LowDegreeMorris}.
\end{remark}

\begin{remark}[Exponential families are hardest]
    \label{rem:exp-hardest}
    In \cite{PWBM-2018-PCAI}, it is noted that, among additive noise models as in Proposition~\ref{prop:channel-additive}, for a given variance the Gaussian distribution uniquely minimizes the Fisher information: for $\sP_x = \sN(x, \sigma^2)$, we have $F_{\sP} = 1 / \sigma^2$, which per Corollary~\ref{cor:fisher-info-lb} is the smallest possible value.
    The authors conclude (for their particular spiked matrix problem) that Gaussian is therefore the ``hardest'' distribution of additive noise.
    Our Proposition~\ref{prop:channel-exponential} shows that, if we move beyond additive noise, all exponential family channels are ``as hard as'' the additive Gaussian channel with the same variance.
    This perhaps clarifies the ``Gaussian is hardest'' phenomenon: generally it is exponential family channels that are hardest; the key property of Gaussian measure is that the Gaussian additive channel is the only additive noise channel whose measures $\sP_x$ also form an exponential family.
\end{remark}

In our final general result, we consider when the upper bound of Theorem~\ref{thm:channel-universality} is tight up to constants, in which case whether $\CAdv_{\leq D}(\sX_n, \sP)$ is bounded or divergent as $n \to \infty$ is truly universal with respect to the channel.
\begin{definition}[Prior dilution]
    Let $\sX$ be a probability measure over $\Sigma^N$ as in the definition of a CLVM, and let $k \geq 1$.
    The \emph{$k$-dilution} of $\sX$, denoted $\sD_k \sX$, is the probability measure over $\Sigma^{kN}$ where, to sample $\widetilde{\bx} \sim \sD_k \sX$, we sample $\bx \sim \sX$ and set $\widetilde{\bx} = \frac{1}{\sqrt{k}}(x_1, \dots, x_1, \dots, x_N, \dots, x_N)$, where each entry is repeated $k$ times (equivalently, $\widetilde{\bx} = \bx \otimes \frac{1}{\sqrt{k}}\one_k$).
\end{definition}

\begin{theorem}[Tight channel universality]
    \label{thm:dilution}
    Let $(\sX, \sP)$ be a good CLVM.
    Suppose that $\sX$ satisfies Assumptions P1 and P2, that $\sP$ satisfies Assumptions C1 and C2, and that $k \geq D^2$.
    Then, there are constants $C_1, C_2$ only depending on the channel $\sP$ and on the constants $(A, B_2, B_4, B_6)$ such that
    \begin{equation}
        C_1\,\Univ_{\leq D}(\sX, 1 / F_{\sP}) \leq \CAdv_{\leq D}(\sD_k \sX, \sP)^2 \leq C_2\,\Univ_{\leq D}(\sX, 1 / F_{\sP}).
    \end{equation}
\end{theorem}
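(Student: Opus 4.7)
For the \textbf{upper bound}, apply Theorem~\ref{thm:channel-universality} to the diluted CLVM $(\sD_k \sX, \sP)$. First verify that $\sD_k \sX$ satisfies Assumptions~P1 and~P2 with the same constants $(A, B_2, B_4, B_6)$: writing $\widetilde{\bx} = \bx \otimes k^{-1/2}\one_k$, we have $\|\widetilde{\bx}\|_\infty \leq A/\sqrt{k} \leq A$ and, for $j \in \{2, 4, 6\}$, $\|\widetilde{\bx}\|_j = k^{1/j - 1/2}\|\bx\|_j \leq k^{-1/4} B_j (kN)^{1/j - 1/4}$. Theorem~\ref{thm:channel-universality} then gives $\CAdv_{\leq D}(\sD_k \sX, \sP)^2 \leq C'\,\Univ_{\leq D}(\sD_k \sX, 1/F_{\sP})$. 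Since the inner product is preserved under dilution, $\langle \widetilde{\bx}^{(1)}, \widetilde{\bx}^{(2)}\rangle = \langle \bx^{(1)}, \bx^{(2)}\rangle$, so $\Univ_{\leq D}(\sD_k \sX, 1/F_{\sP}) = \Univ_{\leq D}(\sX, 1/F_{\sP})$, yielding the claimed upper bound.

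For the \textbf{lower bound}, Assumption~P3 need not hold for $\sD_k \sX$ when $k \asymp D^2 \ll N$, so direct computation via Theorem~\ref{thm:lvm} is required. Regrouping subsets $T \subseteq [kN]$ by block-occupancy patterns $m_j := |T \cap \mathrm{block}\,j|$:
\begin{equation}
    \CAdv_{\leq D}(\sD_k \sX, \sP)^2 = \sum_{\sum_j m_j \leq D} \prod_j \binom{k}{m_j}\, U_{\{m_j\}},\qquad U_{\{m_j\}} := \Ex_{\bx^{(1,2)}} \prod_j R_{\sP}\!\Big(\tfrac{x_j^{(1)}}{\sqrt{k}}, \tfrac{x_j^{(2)}}{\sqrt{k}}\Big)^{m_j}.
\end{equation}
Each $U_{\{m_j\}} \geq 0$: via the covariance representation $R_{\sP}(u, v) = \Ex_{y \sim \sP_0}[h(y, u) h(y, v)]$ with $h(y, u) := \tfrac{d\sP_u}{d\sP_0}(y) - 1$, one rewrites $U_{\{m_j\}} = \|g\|^2_{L^2(\sP_0^{\otimes \sum m_j})}$ for $g(\by) := \Ex_\bx \prod_{j, s} h(y_{j, s}, x_j/\sqrt{k})$. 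Assumptions~C1 and~C2, together with the vanishing identities $R_{\sP}(u, 0) = R_{\sP}(0, v) = 0$ and symmetry of $R_{\sP}$, yield a Taylor expansion $R_{\sP}(u, v) = F_{\sP} uv + Q(u, v)$ with $|Q(u, v)| \leq C_0|uv|(u^2 + v^2)$ on $[-A, A]^2$, hence $R_{\sP}(u, v) = F_{\sP} uv\,(1 + \delta(u, v))$ with $|\delta| \leq C_0'/k$ for $|u|, |v| \leq A/\sqrt{k}$. Combined with $\binom{k}{m_j}/k^{m_j} = \phi_{m_j}(k)/m_j!$ where $\phi_{m_j}(k) := \prod_{\ell=0}^{m_j-1}(1-\ell/k) \in [e^{-m_j(m_j-1)/k}, 1]$, and using $\sum_j m_j(m_j-1) \leq D^2 \leq k$, the integrand factorizes as $\prod_j (F_{\sP} x_j^{(1)} x_j^{(2)})^{m_j}/m_j! \cdot \Phi_{\{m_j\}}(\bx^{(1,2)})$ with $c_1 \leq \Phi \leq c_2$ for positive constants $c_1, c_2$ depending only on $A$, $F_{\sP}$, and $C_0$.

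\textbf{The main obstacle} is that $\Phi_{\{m_j\}}$ depends on $\bx^{(1,2)}$, so its uniform lower bound cannot be factored out of the expectation directly because the remaining integrand $\prod_j (F_{\sP} x_j^{(1)} x_j^{(2)})^{m_j}$ takes both signs. The cleanest resolution is comparison with the additive Gaussian channel $\sP^\star := \sN(\cdot, 1/F_{\sP})$, for which $R_{\sP^\star}(u, v) = e^{F_{\sP} uv} - 1$ admits the analogous expansion with the \emph{same} leading term $F_{\sP} uv$. One obtains a pointwise multiplicative comparison $\prod_j R_{\sP}(\ldots)^{m_j} = (1 + \eta(\bx^{(1,2)}))\prod_j R_{\sP^\star}(\ldots)^{m_j}$ with $|\eta| \leq C_1/D$ uniformly on $[-A, A]^{2N}$. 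Exploiting the $L^2$-norm representations of both $U_{\{m_j\}}$ and the analogous $U^\star_{\{m_j\}}$ (also non-negative by the same covariance argument) transfers this pointwise ratio bound to a comparison $U_{\{m_j\}} \geq (1 - o(1)) U^\star_{\{m_j\}}$ uniformly in $\{m_j\}$ with $\sum m_j \leq D$. Summing then gives $\CAdv_{\leq D}(\sD_k \sX, \sP)^2 \gtrsim \CAdv_{\leq D}(\sD_k \sX, \sP^\star)^2$, and the proof concludes via the chain $\CAdv_{\leq D}(\sD_k \sX, \sP^\star)^2 \geq \Adv_{\leq D}(\sD_k \sX, \sP^\star)^2 = \Univ_{\leq D}(\sD_k \sX, 1/F_{\sP}) = \Univ_{\leq D}(\sX, 1/F_{\sP})$, using the general inequality $\CAdv \geq \Adv$, Proposition~\ref{prop:gaussian-additive}, and dilution invariance.
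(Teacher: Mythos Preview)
Your upper bound is correct and matches the paper's. For the lower bound, you identify the right starting point (regrouping by block occupancy and using non-negativity of $U_{\{m_j\}}$), but there is a genuine gap in your resolution of the ``main obstacle.'' The claimed transfer from the pointwise ratio bound $\prod_j R_{\sP}^{m_j} = (1+\eta)\prod_j R_{\sP^\star}^{m_j}$, $|\eta|\le C_1/D$, to the expectation bound $U_{\{m_j\}} \geq (1-o(1))\,U^\star_{\{m_j\}}$ is not justified: taking expectations of $A = (1+\eta)B$ with $|\eta|\le\epsilon$ only gives $|\Ex A - \Ex B| \leq \epsilon\,\Ex|B|$, and $\Ex|B|$ can be far larger than $U^\star = \Ex B$ because $B = \prod_j R_{\sP^\star}(\cdot)^{m_j}$ oscillates in sign over~$\bx^{(1)},\bx^{(2)}$. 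Invoking the $L^2$ representation does not help here: $g$ and $g^\star$ live in $L^2$ spaces over \emph{different} base measures ($\sP_0^{\otimes M}$ versus $\sN(0,1/F_{\sP})^{\otimes M}$), and the kernel ratio is a statement about the $\bx$-variables, not about $g(\by)$.

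The paper avoids this obstacle by aggregating \emph{before} approximating. After the same binomial lower bound $\prod_j\binom{k}{m_j} \ge e^{-1}\prod_j k^{m_j}/m_j!$ (using $\sum m_j^2 \le D^2 \le k$ and non-negativity of $U_{\{m_j\}}$), it applies the multinomial identity
\[
\sum_{|\bt|\le D}\prod_i \frac{1}{t_i!}\,(kR_i)^{t_i} \;=\; \exp^{\le D}\!\Bigl(k\sum_i R_i\Bigr),
\]
yielding $\CAdv_{\le D}^2 \ge e^{-1}\,\Ex\,\exp^{\le D}\!\bigl(k\sum_i R_{\sP}(x_i^{(1)}/\sqrt{k},x_i^{(2)}/\sqrt{k})\bigr)$. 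Now the integrand is \emph{pointwise positive} for even $D$, so the Taylor bound $\bigl|k\sum_i R_{\sP}(\cdot) - F_{\sP}\langle\bx^{(1)},\bx^{(2)}\rangle\bigr| \le C$ (the argument of Theorem~\ref{thm:channel-universality} with the $k$'s cancelling) combines with the multiplicative stability $\exp^{\le D}(s+t) \ge e^{-100|t|}\exp^{\le D}(s)$ of Proposition~\ref{prop:trunc-exp} to pull the constant out of the expectation directly, giving $\ge C_1\,\Univ_{\le D}(\sX,1/F_{\sP})$. This aggregation-then-approximation step is precisely what your term-by-term comparison cannot do.
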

\noindent
We note that dilution, in addition to giving this tight characterization of the coordinate advantage, can also cause a prior to satisfy Assumptions P1--P3, since $\|\bx\|_2$ is unchanged by dilution, while $\|\bx\|_{2k}$ for $k \geq 2$ only decrease.

The idea of diluting priors for the analysis of low degree algorithms (though for a quite different purpose of relating low degree algorithms to the statistical query model) appeared previously in \cite{BBHLS-2020-SQLowDegree}.
As discussed there, for the Gaussian channel, an algorithm may itself generate more dilute samples from given less dilute ones.
\begin{proposition}[Gaussian dilution invariance; Lemma 7.2 of \cite{BBHLS-2020-SQLowDegree}]
    \label{prop:gaussian-dilution}
    There are randomized polynomial-time algorithms computing $f: \RR \to \RR^k$ and $g: \RR^k \to \RR$ such that:
    \begin{itemize}
    \item If $y \sim \sN(\mu, \sigma^2)$, then $f(y) \sim \sN(\mu \bm 1_k / \sqrt{k}, \sigma^2 \bm I_k)$.
    \item If $\by \sim \sN(\mu \bm 1_k / \sqrt{k}, \sigma^2 \bm I_k)$, then $g(\by) \sim \sN(\mu, \sigma^2)$.
    \end{itemize}
\end{proposition}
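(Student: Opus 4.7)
The plan is to exploit the orthogonal decomposition
\[
\sigma^2 \bI_k \;=\; \frac{\sigma^2}{k}\one_k\one_k^\top \;+\; \sigma^2\Bigl(\bI_k - \tfrac{1}{k}\one_k\one_k^\top\Bigr),
\]
which writes the target covariance as a rank-one contribution along $\one_k/\sqrt{k}$ plus its orthogonal complement. Since Gaussians are closed under linear maps and independent sums, a vector with covariance $\sigma^2\bI_k$ can be realized as an independent sum of Gaussians supported on these two subspaces, and the two algorithms will exploit the two halves of this identity.

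For the compression map $g$, I observe that if $\by \sim \sN(\mu\one_k/\sqrt{k}, \sigma^2 \bI_k)$ then the scalar $\langle \one_k/\sqrt{k},\,\by\rangle$ is a linear functional of $\by$ with unit direction, hence Gaussian with mean $\mu$ and variance $\sigma^2$. I therefore take $g(\by) \colonequals \frac{1}{\sqrt{k}}\sum_{i=1}^k y_i$, which is deterministic and runs in time linear in $k$; no internal randomness is needed.

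For the dilution map $f$, the plan is to draw an auxiliary Gaussian $\bz \sim \sN(\zero, \sigma^2 \bI_k)$ independent of $y$ and output
\[
f(y) \colonequals \frac{y}{\sqrt{k}}\one_k \;+\; \Bigl(\bI_k - \tfrac{1}{k}\one_k\one_k^\top\Bigr)\bz.
\]
The first summand places the scalar signal along the $\one_k$ direction, contributing mean $(\mu/\sqrt{k})\one_k$ and covariance $(\sigma^2/k)\one_k\one_k^\top$; the second summand is the image of isotropic Gaussian noise under the symmetric projector $\bI_k - \frac{1}{k}\one_k\one_k^\top$, and by idempotency it has mean $\zero$ and covariance $\sigma^2(\bI_k - \frac{1}{k}\one_k\one_k^\top)$. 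The two summands are independent (one depends on $y$, the other on $\bz$), so their sum is Gaussian with mean $\mu\one_k/\sqrt{k}$ and covariance $\sigma^2\bI_k$, exactly reproducing the decomposition above.

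I do not anticipate a significant obstacle: both algorithms consist of a single Gaussian sample together with $O(k)$ arithmetic operations, and correctness reduces to the covariance identity above combined with the standard stability of Gaussian measures under linear maps and independent addition. The essential content of the lemma is the rotational symmetry of $\sigma^2\bI_k$, which allows one to freely split or reassemble it along the distinguished direction $\one_k/\sqrt{k}$.
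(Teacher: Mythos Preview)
Your proof is correct. The paper does not actually give its own proof of this proposition; it is quoted as Lemma~7.2 of \cite{BBHLS-2020-SQLowDegree} and stated without argument. Your construction---averaging along $\one_k/\sqrt{k}$ for $g$, and for $f$ lifting $y$ to the $\one_k$ direction while filling in the orthogonal complement with fresh isotropic Gaussian noise via the projector $\bI_k - \tfrac{1}{k}\one_k\one_k^\top$---is the standard way to prove this and matches what one finds in the cited reference. The only implicit assumption worth flagging is that $f$ requires knowledge of $\sigma^2$ to sample $\bz$; this is fine in context, since $\sigma^2$ is a known model parameter while $\mu$ (which your $f$ correctly does not use) is the unknown signal.
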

\noindent
In this sense, Gaussian models are a ``fixed point of dilution,'' which may be seen as a justification for why sufficiently dilute models behave like Gaussian ones.
More specifically, as we sketch in Section~\ref{sec:pf:thm:dilution}, in a diluted model, an algorithm may compute an approximately \emph{minimum variance unbiased estimator} of $x_i$ from the $k$ samples from $\sP_{x_i / \sqrt{k}}$ and average them to obtain an observation of $x_i$ that will, by the central limit theorem, be asymptotically Gaussian.
A version of the Cram\'{e}r-Rao lower bound implies that the lowest variance achievable by such an estimate is precisely $1 / F_{\sP}$.
If this holds, then the resulting averages will be (approximately) distributed as $\sN(x_i, 1 / F_{\sP})$, thus reducing the dilute model with an arbitrary channel to a corresponding less dilute Gaussian model of variance $1 / F_{\sP}$.

\begin{remark}[Channel universality of $\chi^2$ divergence]
    As an aside, we note that if we take $D = N$ in the coordinate advantage, we obtain the $L^2(\QQ)$ norm of the likelihood ratio, which is $1 + \chi^2(\PP \dbar \QQ)$, where the latter is the \emph{$\chi^2$ divergence}.
    Thus, at $D = N$, our results give channel universality for the $\chi^2$ divergence, an interesting and new phenomenon in itself, much in the spirit of the channel universality of the mutual information studied by \cite{LKZ-2015-LowRankChannelUniversality,KXZ-2016-MutualInformationChannelUniversality} for spiked matrix models.
    The $\chi^2$ divergence underlies the \emph{second moment method} for establishing statistical lower bounds for hypothesis testing \cite{MRZ-2015-LimitationsSpectral, BMVVX-2018-InfoTheoretic, PWBM-2018-PCAI}.
    However, it is often augmented with \emph{conditioning} rather than merely computing and bounding the divergence itself; it is likely possible to use our results in tandem with these techniques, but we leave this investigation to future work.
    See also Appendix~\ref{sec:coord-remarks} for more connections between our calculations and the $\chi^2$ divergence.
\end{remark}

\subsection{Applications}
\label{sec:app}

We now move to more concrete considerations, and give two applications of our framework to identifying computationally hard regimes.
In both applications, we will work with priors built from the following kind of random vectors.
\begin{assumption}[Spike priors]
    \label{ass:spike-prior}
    Suppose that $\pi$ is a bounded probability measure on $\RR$ with $\EE_{x \sim \pi}[x] = 0$ and $\EE_{x \sim \pi}[x^2] = 1$.
    We will consider $\bx \in \RR^n$ random with $x_i \sim \pi$ i.i.d.
\end{assumption}
\noindent
We could somewhat relax the boundedness condition and obtain similar results by using the truncation idea in Remark~\ref{rem:trunc-prior}; see, e.g., the general notion of ``tame priors'' from \cite{Kunisky-2021-SpectralBarriersCertification}.

\paragraph{Spiked matrix models}
The first application revisits the non-Gaussian spiked matrix models studied by \cite{LKZ-2015-LowRankChannelUniversality, PWBM-2018-PCAI}.
Let $\sP$ be an additive noise channel satisfying the assumptions of Proposition~\ref{prop:channel-additive}, with underlying density $p$.
Let $\lambda > 0$.
For $\bx$ as in Assumption~\ref{ass:spike-prior}, let $\sX_n$ denote the law of the upper triangle of $\frac{\lambda}{\sqrt{n}}\bx\bx^{\top}$ (not including the diagonal).
We view draws from $\sX_n$ and associated CLVMs as symmetric matrices whose diagonal is zero, by repeating every entry from the upper triangle in the lower triangle.

The following result analyzes a natural algorithm for testing in such a model by thresholding the largest eigenvalue of the observed matrix after an entrywise transformation.
One of the main insights of \cite{LKZ-2015-LowRankChannelUniversality, PWBM-2018-PCAI} is that this transformation is necessary to obtain an algorithm that performs optimally.

\begin{proposition}[Pretransformed eigenvalue test; Theorem 4.8 of \cite{PWBM-2018-PCAI}]
    \label{prop:pwbm}
    Suppose that $\lambda > 1 / \sqrt{F_{\sP}}$, write  $f(y) \colonequals -p^{\prime}(y) / p(y)$, and make the following additional assumptions on the density $p$:
    \begin{enumerate}
    \item $f$ and its first two derivatives are polynomially bounded, i.e., $|f^{(\ell)}(y)| \leq C + y^m$ for some $C > 0$ and even $m \geq 2$ and each $\ell \in \{0, 1, 2\}$.
    \item $p$ has finite moments up to order $5m$: $\int_{-\infty}^{\infty} |x|^k < \infty$ for $1 \leq k \leq 5m$.
    \end{enumerate}
    Write $f(\bY)$ for the entrywise application of $f$ to a matrix $\bY$.
    Define the function
    \begin{equation}
        \mathsf{test}(\bY) \colonequals \left\{\begin{array}{ll} \texttt{q} & \text{if } n^{-1/2}\lambda_{\max}(f(\bY)) < \sqrt{F_{\sP}} + \frac{1}{2}\lambda F_{\sP} + \frac{1}{2\lambda}, \\ \texttt{p} & \text{if } n^{-1/2}\lambda_{\max}(f(\bY)) \geq \sqrt{F_{\sP}} + \frac{1}{2}\lambda F_{\sP} + \frac{1}{2\lambda} \end{array}\right.
    \end{equation}
    Then, $\mathsf{test}$ runs in time $\poly(n)$ and achieves strong detection in the CLVM $(\sX_n, \sP)$.
\end{proposition}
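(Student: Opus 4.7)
The plan is to show that $\lambda_{\max}(f(\bY))/\sqrt{n}$ concentrates around two different values under $\QQ$ and $\PP$, with the threshold $\sqrt{F_{\sP}} + \tfrac{1}{2}\lambda F_{\sP} + \tfrac{1}{2\lambda}$ sitting strictly between them when $\lambda > 1/\sqrt{F_{\sP}}$. The key identities are the classical score-function moments: integration by parts together with the finiteness of $F_{\sP}$ yields $\Ex_{Z \sim p}[f(Z)] = 0$, $\Ex_{Z \sim p}[f(Z)^2] = F_{\sP}$, and $\Ex_{Z \sim p}[f'(Z)] = F_{\sP}$. (The growth and moment hypotheses on $p$ and $f, f', f''$ are exactly what is needed to justify these integrations by parts and to control the tails below.)

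Under $\QQ$, the off-diagonal entries of $f(\bY)$ are i.i.d.\ with mean $0$ and variance $F_{\sP}$, so $f(\bY)/\sqrt{F_{\sP}}$ is a Wigner matrix (with entries possessing sufficiently many moments by the polynomial-boundedness hypothesis). Standard Wigner edge results then give $n^{-1/2}\lambda_{\max}(f(\bY)) \to 2\sqrt{F_{\sP}}$ almost surely. By AM-GM, $\tfrac{1}{2}\lambda F_{\sP} + \tfrac{1}{2\lambda} > \sqrt{F_{\sP}}$ whenever $\lambda \ne 1/\sqrt{F_{\sP}}$, so the threshold strictly exceeds $2\sqrt{F_{\sP}}$ and the Type~I error vanishes.

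Under $\PP$, write $Y_{ij} = Z_{ij} + \frac{\lambda}{\sqrt n}x_ix_j$ and Taylor expand entrywise to second order:
\begin{equation*}
  f(\bY) = f(\bZ) + \frac{\lambda}{\sqrt{n}}\,\bD \circ (\bx\bx^{\top}) + \frac{\lambda^2}{2n}\,\bE \circ (\bx\bx^{\top})^{\circ 2},
\end{equation*}
where $D_{ij}=f'(Z_{ij})$ and $E_{ij}=f''(\xi_{ij})$ for some intermediate point. Splitting $\bD = F_{\sP}\one\one^{\top} + (\bD - F_{\sP}\one\one^{\top})$ isolates the signal $\frac{\lambda F_{\sP}}{\sqrt n}\bx\bx^{\top}$, a rank-one matrix of operator norm $\approx \lambda F_{\sP}\sqrt{n}$. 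The remaining terms should be treated as perturbations of the Wigner bulk $f(\bZ)$: the centered fluctuation $\frac{\lambda}{\sqrt n}(\bD - F_{\sP}\one\one^{\top}) \circ (\bx\bx^{\top})$ has entries with zero mean and bounded variance (again using the polynomial bound on $f'$ and the boundedness of $\bx$), so it remains $O(\sqrt{n})$ in operator norm by a Wigner-type bound; the second-order remainder is controlled via the polynomial bound on $f''$ and the finite $5m$-th moments of $p$, yielding an operator-norm bound of order $n^{-1/2}$ times a polynomial in $\|\bx\|_{\infty}$ summed entrywise, which is $o(\sqrt n)$. Applying the BBP phase transition to the resulting spiked Wigner model with signal strength $\lambda F_{\sP}$ and noise variance $F_{\sP}$, and using that $\lambda F_{\sP} > \sqrt{F_{\sP}}$, we conclude $n^{-1/2}\lambda_{\max}(f(\bY)) \to \lambda F_{\sP} + 1/\lambda$, which again strictly exceeds the threshold.

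The main obstacle is the perturbative analysis under $\PP$: one must show that the operator norms of the centered linear term and the quadratic Taylor remainder are both $o(\sqrt n)$, so that the BBP outlier created by the mean part of the linear term is not washed out. This is precisely where the polynomial bounds on $f, f', f''$ and the $5m$-th moment condition on $p$ are used, and where \cite{PWBM-2018-PCAI} invokes concentration inequalities for matrices with heavy-tailed but polynomially bounded entries. Everything else (Wigner's law, integration-by-parts identities for the score function, BBP) is standard once these error bounds are in hand.
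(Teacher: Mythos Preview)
The paper does not prove this proposition; it is quoted verbatim as Theorem~4.8 of \cite{PWBM-2018-PCAI} and used as a black box. So there is no ``paper's own proof'' to compare against here. That said, your sketch is the correct outline of the argument in \cite{PWBM-2018-PCAI}: compute the score moments $\Ex f(Z)=0$, $\Ex f(Z)^2=\Ex f'(Z)=F_{\sP}$; observe that under $\QQ$ the matrix $f(\bY)$ is Wigner with variance $F_{\sP}$ so $n^{-1/2}\lambda_{\max}\to 2\sqrt{F_{\sP}}$; under $\PP$, Taylor expand to extract the deterministic rank-one spike $(\lambda F_{\sP}/\sqrt n)\bx\bx^\top$ and apply the BBP transition to get $n^{-1/2}\lambda_{\max}\to \lambda F_{\sP}+1/\lambda$; finally note the threshold is the midpoint of these two limits.

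One imprecision worth fixing: you write that the centered linear fluctuation $\frac{\lambda}{\sqrt n}(\bD-F_{\sP}\one\one^\top)\circ(\bx\bx^\top)$ has operator norm $O(\sqrt n)$ ``by a Wigner-type bound,'' but $O(\sqrt n)$ is exactly the scale of the spike and would not suffice for the perturbation argument (as you yourself note in the final paragraph, you need $o(\sqrt n)$). In fact the entries of this matrix have variance $O(1/n)$ (the $\lambda/\sqrt n$ factor squared, times bounded $x_ix_j$, times $\Var f'(Z)$), so the Wigner bound gives operator norm $O(1)$, not $O(\sqrt n)$. With that correction, Weyl's inequality handles the dependence between $f(\bZ)$ and the error terms and the argument goes through.
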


\noindent
Our results imply the following complementary result.

\begin{corollary}[LCDF analysis of spiked matrix model]
    \label{cor:non-gaussian-smm}
    The following hold:
    \begin{enumerate}
    \item If $\lambda < 1 / \sqrt{F_{\sP}}$, then for any $D = D(n) = o(n / \log n)$, $\CAdv_{\leq D}(\sX_n, \sP) = O(1)$. Consequently, if Conjecture~\ref{conj:low-deg} holds, then, for any $\delta > 0$, there is no algorithm that runs in time $\exp(O(n^{1 - \delta}))$ and achieves strong detection in the CLVM $(\sX_n, \sP)$.
    \item If $\lambda > 1 / \sqrt{F_{\sP}}$, then for any $D = D(n) = \omega(\log n)$, $\CAdv_{\leq D}(\sX_n, \sP) = \omega(1)$.
    \end{enumerate}
\end{corollary}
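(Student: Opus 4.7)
The plan is to invoke Theorem~\ref{thm:channel-universality} in order to reduce the analysis of $\CAdv_{\leq D}(\sX_n, \sP)$ to that of the polynomial advantage of a Gaussian spiked Wigner model at noise variance $1/F_{\sP}$, and then to quote the standard low-degree analysis of that model.

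First I would verify the hypotheses of Theorem~\ref{thm:channel-universality}. Proposition~\ref{prop:channel-additive} directly supplies the channel assumptions~C1 and~C2. For the prior, since $\pi$ is bounded there is a constant $C$ with $|x_i| \leq C$ almost surely; the entries of $\frac{\lambda}{\sqrt n}\bx\bx^\top$ are then all bounded in magnitude by $\lambda C^2/\sqrt n$, which gives Assumption~P1 for large $n$. A direct computation shows $\sum_{i < j} |\frac{\lambda}{\sqrt n} x_i x_j|^k \lesssim n^{2 - k/2} \asymp N^{1 - k/4}$ with $N = \binom{n}{2}$, verifying Assumptions~P2 and~P3 with constants depending only on $\lambda$ and $C$. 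Note that these bounds hold deterministically, so the truncation trick in Remark~\ref{rem:trunc-prior} is not needed here.

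Next I would expand the universal quantity using Proposition~\ref{prop:gaussian-additive}. For two independent copies $\bx^{(1)}, \bx^{(2)} \sim \pi^{\otimes n}$, writing $S = \langle \bx^{(1)}, \bx^{(2)}\rangle$ and $T = \sum_i (x_i^{(1)} x_i^{(2)})^2$, the inner product of the two signal upper triangles equals $\frac{\lambda^2}{2n}(S^2 - T)$; since $|T| \leq n C^4$ almost surely, the $T$ term contributes only a bounded multiplicative constant inside the truncated exponential. Up to constants, $\Univ_{\leq D}(\sX_n, 1/F_{\sP})$ thus coincides with the polynomial advantage of the standard Gaussian spiked Wigner model at effective signal-to-noise ratio $\mu = \lambda\sqrt{F_{\sP}}$, and the threshold $\lambda = 1/\sqrt{F_{\sP}}$ corresponds to the standard $\mu = 1$ transition.

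The two cases then follow from the known analyses of the Gaussian spiked Wigner model in \cite{HKPRSS-2017-SOSSpectral, Hopkins-2018-Thesis, KWB-2022-LowDegreeNotes}. When $\mu < 1$, one has $\Univ_{\leq D}(\sX_n, 1/F_{\sP}) = O(1)$ for $D = o(n/\log n)$, and the upper bound of Theorem~\ref{thm:channel-universality} combined with Conjecture~\ref{conj:low-deg} yields statement~(1). When $\mu > 1$, one has $\Univ_{\leq D}(\sX_n, 1/F_{\sP}) = \omega(1)$ for $D = \omega(\log n)$. The main subtlety I anticipate is here: Theorem~\ref{thm:channel-universality} lower-bounds $\CAdv_{\leq D}(\sX_n, \sP)^2$ by $C_2 \Univ_{\leq D}(\sX_n, 1/F_{\sP}) - C_3 \Univ_{\leq D - 2}(\sX_n, 1/F_{\sP})$, so one needs a constant-factor growth gap between consecutive even truncation levels, not merely divergence. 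In the supercritical regime the dominant terms of the truncated Taylor expansion grow geometrically at a rate strictly exceeding $C_3/C_2$ once $D$ is large enough, so this subtraction can be absorbed and the lower bound indeed gives $\CAdv_{\leq D}(\sX_n, \sP) = \omega(1)$, yielding statement~(2). Extracting this explicit ratio from the prior Gaussian calculations, rather than citing them verbatim, is the step I expect to require the most care.
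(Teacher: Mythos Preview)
Your plan for Claim~1 is essentially the paper's proof: verify Assumptions~P1--P2 and C1--C2, apply the upper bound of Theorem~\ref{thm:channel-universality}, absorb the diagonal correction $T$ via Proposition~\ref{prop:trunc-exp}, and quote the Gaussian spiked Wigner bound (Proposition~\ref{prop:kwb-spiked-wigner}).

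Your plan for Claim~2, however, has a genuine gap. You propose to use the lower bound of Theorem~\ref{thm:channel-universality} and argue that in the supercritical regime the ratio $\Univ_{\leq D}/\Univ_{\leq D-2}$ eventually exceeds $C_3/C_2$. But this ratio does not grow with $D$: for $D$ in the relevant range (say $\omega(\log n)$ but $o(n)$), the degree-$d$ term of $\Univ_{\leq D}$ behaves like $\mu^{2d}/\sqrt{d}$ with $\mu = \lambda\sqrt{F_{\sP}}$, so the ratio of successive even truncations tends to $\mu^4$, a constant depending on $\lambda$ but not on $D$. When $\lambda$ is only slightly above $1/\sqrt{F_{\sP}}$, $\mu^4$ is only slightly above $1$ and can certainly be smaller than the channel-dependent constant $C_3/C_2$, in which case $C_2\,\Univ_{\leq D} - C_3\,\Univ_{\leq D-2}$ is negative and the bound is vacuous. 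The paper says exactly this: the lower bound of Theorem~\ref{thm:channel-universality} ``would give a threshold for $\lambda$ off by a constant factor.''

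The paper's fix is to bypass Theorem~\ref{thm:channel-universality} entirely for Claim~2 and exhibit an explicit low coordinate degree witness. It takes the local Fisher score $f(y) = -p'(y)/p(y)$, applies it entrywise, and sets $g(\bY) = \Tr\big((n^{-1/2}f(\bY))^D\big)$, which has $\cdeg(g)\le D$. Then $\Ex_{\PP}g \ge (\lambda F_{\sP} + 1/\lambda)^D(1-o(1))$ by the spectral result of \cite{PWBM-2018-PCAI}, while $\Ex_{\QQ}g^2 \le 2n^2(2\sqrt{F_{\sP}})^{2D}$ by the standard Wigner trace combinatorics. Since $\lambda F_{\sP}+1/\lambda > 2\sqrt{F_{\sP}}$ exactly when $\lambda > 1/\sqrt{F_{\sP}}$, the ratio $\Ex_{\PP}g / (\Ex_{\QQ}g^2)^{1/2}$ diverges for $D=\omega(\log n)$. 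This gives the sharp threshold directly, with no channel-dependent constant to absorb.
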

\noindent
Thus, to improve on the pretransformed eigenvalue test requires nearly exponential time.
The proof of Claim 1 is very simple, combining our Theorem~\ref{thm:channel-universality} with the previous results of \cite{KWB-2022-LowDegreeNotes} on additive Gaussian models.
Claim 2 encodes a version of Proposition~\ref{prop:pwbm} by using the trace of a large power of a matrix as a proxy for the spectral norm.
We emphasize that, despite this simplicity, Claim 1 above gives the first evidence of hardness for any large class of algorithms for all but a few very special cases of the density $p$.

\begin{remark}[Higher rank priors]
    \label{rem:higher-rank}
    The prior work \cite{BBKMW-2020-SpectralPlantingColoring} established limitations of LDP for detecting matrices of constant rank $k \geq 2$ as $n \to \infty$ observed through additive Gaussian noise.
    We omit the details, but the proof of Corollary~\ref{cor:non-gaussian-smm} \emph{mutatis mutandis} also extends those limitations to LCDF and establishes their channel universality, so long as $\lambda$ is again scaled by $\sqrt{F_{\sP}}$.
\end{remark}

One interesting example is the \emph{sparse Rademacher} prior where, for some $s \in (0, 1]$, $\bx$ in the prior has entries distributed as
\begin{equation}
    \pi = (1 - s)\delta_0 + \frac{s}{2}\delta_{1 / \sqrt{s}} + \frac{s}{2}\delta_{-1 / \sqrt{s}}.
\end{equation}
As predicted using non-rigorous statistical physics methods by \cite{LKZ-2015-PhaseTransitionsSparsePCA,LKZ-2015-LowRankChannelUniversality} and proved by \cite{KXZ-2016-MutualInformationChannelUniversality}, there is some $s^* \approx \num{0.09}$ such that, once $s < s^*$, there is an exponential-time exhaustive search algorithm achieving strong detection in the CLVM $(\sX_n, \sP)$ for some values of $\lambda < 1 / \sqrt{F_{\sP}}$.\footnote{This is not entirely explicit in those results, but, for discrete priors such as sparse Rademacher, one may view their results as computing explicitly the typical value of the \emph{free energy}, a particular function of the observation in a spiked matrix model which essentially coincides with the likelihood ratio. Their results then imply that thresholding the free energy achieves strong detection, which amounts to analyzing the statistically optimal hypothesis test per the Neyman-Pearson lemma.}
In this context, our Corollary~\ref{cor:non-gaussian-smm} gives evidence that this problem has a statistical-to-computational gap for \emph{any} reasonable choice of additive noise.
Again, this is the first evidence---in the form of lower bounds against any large class of algorithms---for this statistical-to-computational gap for most noise distributions.

\begin{remark}[Stochastic block model]
    \label{rem:sbm}
    Considering the sparse Rademacher prior through the Bernoulli channel $\sP_x = \Ber(\frac{1}{2} + x)$, we obtain a model that is a dense version of the \emph{stochastic block model} (see \cite{Abbe-2017-SBMReview, Moore-2017-SBMReview} for general background): we seek to distinguish an \Erdos-\Renyi\ random graph on $n$ vertices with edge probability $\frac{1}{2}$ from a graph with two random ``planted communities,'' each of roughly $\frac{s}{2}\cdot n$ vertices, such that vertices are connected with probability $\frac{1}{2} + \frac{\lambda}{sn}$ within those communities, probability $\frac{1}{2} - \frac{\lambda}{sn}$ between them, and probability $\frac{1}{2}$ otherwise.
    The same argument as for the first claim of Corollary~\ref{cor:non-gaussian-smm} shows that LCDF fail to achieve strong detection in this model when $\lambda < 1 / \sqrt{F_{\sP}} = 1/2$, and the results of \cite{KXZ-2016-MutualInformationChannelUniversality} imply a statistical-to-computational gap for sufficiently small $s$.
    Similar results also follow for observations in other non-Bernoulli discrete exponential families, such as Poisson, binomial, or geometric.
    We do not pursue it here, but, following Remark~\ref{rem:higher-rank}, greater numbers of communities may also be treated by considering a signal matrix of higher rank, as in \cite{BBKMW-2020-SpectralPlantingColoring}.
\end{remark}

\paragraph{Spiked tensor models}
The second application concerns the problem of \emph{tensor PCA}, for which, to the best of our knowledge, general non-Gaussian additive noise has not been considered before.
Let $q \geq 3$.
We will now allow $\lambda = \lambda(n) > 0$ to vary.
For $\bx$ as in Assumption~\ref{ass:spike-prior}, let $\sX_n$ denote the law of the entries of $\lambda n^{-q/4} \bx^{\otimes q}$ indexed by tuples $1 \leq i_1 < \cdots < i_q \leq N$.
(Again, one may view draws from the prior and observations from the CLVM as being symmetric tensors with all entries having repeated indices set to zero.)

The following result is a sharpening of prior work of \cite{HKPRSS-2017-SOSSpectral, Hopkins-2018-Thesis} that precisely characterizes (in the low degree framework) the power of subexpoential time algorithms for tensor PCA.

\begin{proposition}[Theorem 3.3 of \cite{KWB-2022-LowDegreeNotes}; Theorem 5.2.6 of \cite{Kunisky-2021-SpectralBarriersCertification}]
    \label{prop:kwb-stm}
    Let $\sP_x = \sN(x, 1)$.
    Then, there are constants $a_q, b_q > 0$ such that:
    \begin{enumerate}
    \item If $\lambda \leq a_q D^{-(q - 2)/4}$, then $\Adv_{\leq D}(\sX_n, \sP) = O(1)$.
    \item If $\lambda \geq b_q D^{-(q - 2)/4}$, $D = \omega(1)$, and $D \leq \frac{2}{q}n$, then $\Adv_{\leq D}(\sX_n, \sP) = \omega(1)$.
    \end{enumerate}
\end{proposition}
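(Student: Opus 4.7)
The plan is to apply Proposition~\ref{prop:gaussian-additive} directly, since $\sP_x = \sN(x, 1)$ exactly fits its hypothesis:
\begin{equation}
\Adv_{\leq D}(\sX_n, \sP)^2 = \sum_{d=0}^D \frac{1}{d!}\, \Ex_{\bx^{(1)}, \bx^{(2)}}\bigl[\langle \bT^{(1)}, \bT^{(2)}\rangle^d\bigr],
\end{equation}
where $\bT^{(i)}$ denotes the strict upper-triangular restriction of $\lambda n^{-q/4}(\bx^{(i)})^{\otimes q}$. Setting $u_i \colonequals x^{(1)}_i x^{(2)}_i$ gives i.i.d.\ bounded random variables with $\Ex u_i = 0$, $\Ex u_i^2 = 1$, and crucially $\Ex u_i^k = (\Ex_{x \sim \pi}[x^k])^2 \geq 0$ for every $k \geq 1$, and
\begin{equation}
\langle \bT^{(1)}, \bT^{(2)}\rangle = \frac{\lambda^2}{n^{q/2}}\, e_q(\bu),
\end{equation}
where $e_q$ is the $q$-th elementary symmetric polynomial in $n$ variables. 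By Newton's identities, $e_q(\bu)$ is dominated by $p_1(\bu)^q/q! = \langle \bx^{(1)}, \bx^{(2)}\rangle^q/q!$, with corrections involving higher power sums $p_k(\bu) = \sum_i u_i^k$ for $k \geq 2$ that carry smaller combinatorial weights in $n$. The whole problem thus reduces to sharp two-sided bounds on $\Ex[e_q(\bu)^d]$ for $d \leq D$.

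For the upper bound (Part 1), I would bound $\Ex[e_q(\bu)^d] \leq c_q^d (qd)^{qd/2} n^{qd/2}/(q!)^d$ using Khintchine--Marcinkiewicz--Zygmund-type moment bounds on $p_1(\bu)$, together with a bookkeeping argument showing that the higher power-sum corrections from $e_q - p_1^q/q!$ do not dominate. Combined with $d! \geq (d/e)^d$, each term of the truncated Taylor series is at most $(c_q' \lambda^2 d^{(q-2)/2})^d$. Choosing $a_q$ small enough that $c_q' a_q^2 \leq 1/2$ makes every such term at most $2^{-d}$ whenever $\lambda \leq a_q D^{-(q-2)/4}$, so the full sum is $O(1)$. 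For the lower bound (Part 2), the nonnegativity $\Ex u_i^k \geq 0$ implies that every monomial expectation in the expansion of $\Ex[e_q(\bu)^d]$ is nonnegative, hence every Taylor term is nonnegative, and it suffices to show that the single term at $d = D$ (taken even, replacing $D$ by $D-1$ if necessary) is already $\omega(1)$. The hypothesis $D \leq \frac{2}{q} n$ ensures $qD \leq 2n$, placing the moment computation firmly in the ``CLT regime'' for $p_1(\bu)$; retaining only the Wick-pairing configurations, in which the $qD$ factors come in $qD/2$ matched pairs of equal indices, gives $\Ex[e_q(\bu)^D] \geq (c_q'')^D (qD)^{qD/2} n^{qD/2}/(q!)^D$. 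Together with Stirling,
\begin{equation}
\frac{1}{D!}\,\Ex\bigl[\langle \bT^{(1)}, \bT^{(2)}\rangle^D\bigr] \geq (c_q''' \lambda^2 D^{(q-2)/2})^D,
\end{equation}
which, for $\lambda \geq b_q D^{-(q-2)/4}$ with $b_q$ chosen so that $c_q''' b_q^2 > 1$ and for $D = \omega(1)$, is $\omega(1)$.

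The main obstacle is the combinatorial accounting for $\Ex[e_q(\bu)^d]$, in both bounds: one must classify the $qd$-tuples of indices by the partition they induce on the $qd$ factors, showing that ``higher-multiplicity'' partitions (where some index appears more than twice) are negligible for the upper bound while the ``paired'' partitions indeed dominate for the lower bound. This is most cleanly organized by multigraphs on $qd$ vertices that record index identifications: each collision beyond a simple pair saves a factor of $n^{-1}$ from a lost free index but may cost a polynomial-in-$d$ factor from a higher moment of some $u_i$. The constraint $D \leq \frac{2}{q} n$ is precisely the threshold at which there remain enough free indices for the paired contribution to dominate, and the boundedness of $\pi$ ensures that all needed moments of $u_i$ are uniformly controlled. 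This core estimate was carried out in \cite{HKPRSS-2017-SOSSpectral, Hopkins-2018-Thesis} and tightened in \cite{KWB-2022-LowDegreeNotes, Kunisky-2021-SpectralBarriersCertification}; the surrounding Taylor-series algebra and the small/large $\lambda$ dichotomy in $D$ then follow routinely.
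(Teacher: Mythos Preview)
Your proposal is correct and follows essentially the same approach as the cited references that the paper relies on. The paper does not prove Proposition~\ref{prop:kwb-stm} directly---it is a citation---but its proof of the closely related Proposition~\ref{prop:kwb-stm-2} in Appendix~\ref{app:pf:prop:kwb-stm-2} quotes exactly the bounds you derive: an upper bound $\sum_{d=0}^{D}(\lambda^2 q^{q/2+1}(2D)^{(q-2)/2})^d$ from Section~3.1.1 of \cite{KWB-2022-LowDegreeNotes} and a lower bound $(\lambda^2 e^{-q} q^{q/2} D^{(q-2)/2})^D$ from Section~3.1.2, which are precisely your $(c_q'\lambda^2 d^{(q-2)/2})^d$ and $(c_q'''\lambda^2 D^{(q-2)/2})^D$ with explicit constants, obtained via the same elementary-symmetric-polynomial moment analysis you outline.
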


As this result shows, unlike spiked matrix models, spiked tensor models have a smoother statistical-to-computational ``ramp'' with a large parameter regime where subexponential time algorithms exist, which manifests as a polynomial dependence on $D$ in the thresholds for $\lambda$.
Because of this, it is less meaningful here to pin down the constants $a_q$ and $b_q$.
Allowing this slack makes our machinery even more useful, and we may with a very straightforward application of Theorem~\ref{thm:channel-universality} obtain the following direct generalization to arbitrary additive noise models.

\begin{corollary}[LCDF analysis of spiked tensor model]
    \label{cor:non-gaussian-stm}
    Let $\sP$ be an additive noise channel satisfying the assumptions of Proposition~\ref{prop:channel-additive}.
    Then, there are constants $a_{q, \sP}, b_{q, \sP} > 0$ such that:
    \begin{enumerate}
    \item If $\lambda \leq a_{q, \sP} D^{-(q - 2)/4}$, then $\CAdv_{\leq D}(\sX_n, \sP) = O(1)$.
    \item If $\lambda \geq b_{q, \sP} D^{-(q - 2)/4}$, $D = \omega(1)$, and $D \leq \frac{2}{q}n$, then $\CAdv_{\leq D}(\sX_n, \sP) = \omega(1)$.
    \end{enumerate}
\end{corollary}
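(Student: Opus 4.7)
The plan is to apply Theorem~\ref{thm:channel-universality} to reduce the coordinate advantage under an arbitrary additive channel $\sP$ to a Gaussian polynomial advantage, and then read off the conclusion from the Gaussian tensor PCA result of Proposition~\ref{prop:kwb-stm}. The constants should be $a_{q,\sP} \colonequals a_q/\sqrt{F_{\sP}}$ and $b_{q,\sP} \colonequals b_q/\sqrt{F_{\sP}}$, up to a harmless inflation of the latter discussed below.

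First I verify the hypotheses of Theorem~\ref{thm:channel-universality}. Since $\sP$ satisfies the assumptions of Proposition~\ref{prop:channel-additive}, channel conditions C1 and C2 hold. For the prior, $\pi$ is bounded, say $|x_i|\leq M$ almost surely, so every coordinate of the signal $\lambda n^{-q/4}\bx^{\otimes q}$ has magnitude at most $\lambda M^q n^{-q/4}$. In both regimes under consideration $\lambda$ is bounded by a constant depending only on $q$ and $\sP$, so P1 holds. With $N=\binom{n}{q}\asymp n^q$, a direct calculation gives
\[
\|\text{signal}\|_k^k \;\leq\; \lambda^k M^{qk}\, n^{-qk/4}\binom{n}{q} \;\lesssim\; n^{q-qk/4},
\]
so $\|\text{signal}\|_k\lesssim n^{q/k-q/4}\asymp N^{1/k-1/4}$, which verifies P2 and P3.

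By Theorem~\ref{thm:channel-universality} there are constants $C_1,C_2,C_3>0$ depending only on $q$ and $\sP$ such that, writing $U_D\colonequals\Univ_{\leq D}(\sX_n,1/F_{\sP})$,
\[
C_2 U_D - C_3 U_{D-2} \;\leq\; \CAdv_{\leq D}(\sX_n,\sP)^2 \;\leq\; C_1 U_D.
\]
Rescaling the prior inside $\Univ$ gives $U_D=\Adv_{\leq D}(\sqrt{F_{\sP}}\cdot\sX_n,\sN(x,1))^2$, and $\sqrt{F_{\sP}}\cdot\sX_n$ is precisely the Gaussian spiked tensor prior with SNR $\lambda\sqrt{F_{\sP}}$, so Proposition~\ref{prop:kwb-stm} applies with this effective SNR. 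For case~1, $\lambda\leq a_{q,\sP}D^{-(q-2)/4}$ becomes $\lambda\sqrt{F_{\sP}}\leq a_q D^{-(q-2)/4}$, whence $U_D=O(1)$ and $\CAdv_{\leq D}(\sX_n,\sP)=O(1)$.

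For case~2, $\lambda\sqrt{F_{\sP}}\geq b_q D^{-(q-2)/4}$ gives $U_D=\omega(1)$ by Proposition~\ref{prop:kwb-stm}, but one must still conclude $C_2 U_D-C_3 U_{D-2}=\omega(1)$. This is the main obstacle: the lower bound is only informative once $U_D$ dominates $U_{D-2}$ by more than the factor $C_3/C_2$. The resolution is that in the divergent regime $U_D$ grows exponentially in $D$---a fact implicit in the term-by-term analysis of $\exp^{\leq D}$ underlying Proposition~\ref{prop:kwb-stm}, where near threshold the dominant contribution to $U_D$ comes from degrees near $D$---so $U_D/U_{D-2}$ is bounded below by a constant strictly greater than $1$, and can be made arbitrarily large by inflating $b_{q,\sP}$ by a suitable constant factor (which we absorb into the definition). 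Granting this, $\CAdv_{\leq D}(\sX_n,\sP)^2=\omega(1)$, completing the proof.
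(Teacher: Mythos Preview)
Your proposal is correct and follows essentially the same route as the paper: apply the upper bound of Theorem~\ref{thm:channel-universality} for claim~1, and for claim~2 use the lower bound together with the fact that $U_D/U_{D-2}$ can be made to exceed any constant by inflating $b_{q,\sP}$. The one place where you write ``granting this'' the paper isolates explicitly as Proposition~\ref{prop:kwb-stm-2}, proved by revisiting the term-by-term bounds in \cite{KWB-2022-LowDegreeNotes}; you have correctly identified both the obstacle and its resolution, just without stating the auxiliary ratio bound as a separate lemma.
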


Note that, unlike in the spiked matrix model where the pretransformed eigenvalue test had been studied before, here we are able to make predictions about computational thresholds for problems where explicit algorithms have not been studied at all (for general choices of the density $p$).
Our result suggests that there is \emph{some} LCDF depending on the density $p$ that achieves strong detection (say, by thresholding as in Proposition~\ref{prop:pwbm}) up to the value of $\lambda$ we predict, but we may generate our predictions without even writing this function down explicitly.

\subsection{Channel calculus}

Finally, we elaborate on our results on modifying noisy channels.

\begin{definition}[Censorship]
    Let $\sP$ be a channel with range $\Omega$ and $\eta \in [0, 1]$.
    We define the \emph{$\eta$-censored} version of $\sP$ to be the channel $\sC_{\eta}\sP$ with range $\Omega \sqcup \{\bullet\}$ for a new formal symbol $\bullet$, where to sample from the measure $\sC_{\eta}\sP_x$, we observe $\bullet$ with probability $\eta$, and a sample from $\sP_x$ with probability $1 - \eta$.
\end{definition}

\begin{theorem}[Censored channels]
    \label{thm:censorship}
    Suppose $\sP$ is a channel satisfying Assumptions C1 and C2.
    Then, for any $\eta \in [0, 1]$, $\sC_{\eta}\sP$ also satisfies Assumptions C1 and C2, and has Fisher information $F_{\sC_{\eta}\sP} = (1 - \eta) F_{\sP}$.
\end{theorem}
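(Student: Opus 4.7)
The entire statement will follow from the single identity
\[
R_{\sC_{\eta}\sP}(x^{(1)}, x^{(2)}) = (1-\eta)\, R_{\sP}(x^{(1)}, x^{(2)}),
\]
so my plan is to establish this identity by a direct computation of the likelihood ratio of the censored channel, and then read off C1, C2, and the Fisher information as immediate consequences.

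First, I would compute the Radon-Nikodym derivative $d\sC_{\eta}\sP_{x} / d\sC_{\eta}\sP_0$ on $\Omega \sqcup \{\bullet\}$. By construction, $\sC_{\eta}\sP_x$ decomposes as $\eta\,\delta_{\bullet}$ on $\{\bullet\}$ plus $(1-\eta)\,\sP_x$ on $\Omega$, and likewise for $\sC_{\eta}\sP_0$. Thus at the point $\bullet$ the Radon-Nikodym derivative is $\eta/\eta = 1$, while on $\Omega$ the factors of $(1-\eta)$ cancel and the derivative equals $d\sP_x/d\sP_0(y)$. In particular, the quantity $d\sC_{\eta}\sP_x / d\sC_{\eta}\sP_0 - 1$, which appears squared inside $R_{\sC_{\eta}\sP}$, vanishes identically at $y = \bullet$.

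Next, I would substitute into the definition of $R_{\sC_{\eta}\sP}$ and split the integral over $\{\bullet\}$ and $\Omega$. The $\{\bullet\}$ piece contributes $0$ because the integrand is zero there, and on $\Omega$ the measure $\sC_{\eta}\sP_0$ reduces to $(1-\eta)\sP_0$ while the integrand agrees with the integrand defining $R_{\sP}(x^{(1)}, x^{(2)})$. This yields the claimed identity $R_{\sC_{\eta}\sP} = (1-\eta) R_{\sP}$, and along the way verifies that $\sC_{\eta}\sP$ is good (absolute continuity and $L^2$-integrability are inherited from $\sP$).

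Finally, since $R_{\sC_{\eta}\sP}$ is a constant multiple of $R_{\sP}$ as a function of $(x^{(1)}, x^{(2)})$, it inherits the $\sC^4$ regularity on a neighborhood of $[-A,A]^2$ (Assumption C1) and the vanishing of $\partial^3/\partial x^{(1)^2}\partial x^{(2)}$ at the origin (Assumption C2). Differentiating twice in the mixed partial and evaluating at the origin gives $F_{\sC_{\eta}\sP} = (1-\eta)F_{\sP}$, as claimed. There is no real obstacle: the only point to be careful about is the decomposition of the dominating measure on $\Omega \sqcup \{\bullet\}$ so that the piecewise Radon-Nikodym computation is rigorous, after which everything collapses to this one-line scaling.
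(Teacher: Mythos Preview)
Your proposal is correct and matches the paper's proof essentially line for line: compute the censored likelihood ratio as $1$ at $\bullet$ and $L_x(y)$ on $\Omega$, deduce $R_{\sC_{\eta}\sP} = (1-\eta)R_{\sP}$, and read off C1, C2, and the Fisher information from this scalar multiple. The paper is terser but the argument is identical.
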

\noindent
The theorem is simple both to state and to prove, but is deceptively powerful.
For example, combined with Corollary~\ref{cor:non-gaussian-smm}, we deduce that strong detection in the spiked matrix model with additive noise channel $\sP$ censored at a constant rate $\eta$ is hard for LCDF when $\lambda < 1 / \sqrt{(1 - \eta)F_{\sP}}$ (for a censored spiked tensor model, Corollary~\ref{cor:non-gaussian-smm} holds verbatim with constants $a_{q, \sP, \eta}$ and $b_{q, \sP, \eta}$ adjusted to absorb the dependence on $\eta$).

Again, to the best of our knowledge these models have not been studied before, and our theory can make predictions of computational thresholds even in the absence of ``baseline'' algorithms to consider.
In Section~\ref{sec:censorship}, we give some speculation as to a spectral algorithm we expect to match the aforementioned threshold for detecting censored spiked matrices, and propose the related Conjecture~\ref{conj:censored-rmt}, a claim of a random matrix phase transition similar to those of \cite{BBAP-2005-LargestEigenvalueSampleCovariance, FP-2007-LargestEigenvalueWigner}.

\begin{remark}[Censored stochastic block model]
    \label{rem:censored-sbm}
    The previous work \cite{SLKZ-2015-SpectralDetectionCensoredSBM} considered a censored version of the dense stochastic block model we mentioned in Remark~\ref{rem:sbm}.
    They consider the dense Rademacher prior ($s = 1$) and a very high rate of censorship where $\eta = 1 - \frac{\alpha}{n}$ for some constant $\alpha > 0$, while the prior $\sX$ is the law of $\lambda\bx\bx^{\top}$ for $\lambda$ constant, without a factor of $\frac{1}{\sqrt{n}}$.
    But, one may check that the factors of $\alpha$ and $n$ may be moved around the components of the model: for the Bernoulli channel $\sP_x = \Ber(\frac{1}{2} + x)$, the coordinate advantage in their model is identical to that of a model with the Rademacher prior as we defined it above (with the factor of $\frac{1}{\sqrt{n}}$) and with $\lambda$ multiplied by $\sqrt{\alpha}$.
    Thus, we immediately recover that LCDF have the same computational threshold as proved there, which in our notation is $\lambda > \frac{1}{2\sqrt{\alpha}}$.
    (For comparison, \cite{SLKZ-2015-SpectralDetectionCensoredSBM} use a parameter $\epsilon$ which is $\epsilon = \frac{1}{2} - \lambda$, and write the threshold as $\alpha > \frac{1}{(1 - 2\epsilon)^2}$.)
    This problem is known, by their results together with \cite{HLM-2012-LabelledSBM,LMX-2015-LabelledSBM}, not to have a statistical-to-computational gap, but once again our previous reasoning gives that if we took a sparser community structure with $s < 1$, then for sufficiently small $s$ a gap would appear.
    And, once again, our tools immediately give analogous results for observations in other discrete exponential families.
\end{remark}

\begin{remark}[Effective signal-to-noise ratio]
    More generally, when a prior has an SNR $\lambda$ that appears linearly as above, for a given channel $\sP$ censored at rate $\eta$, we may define the \emph{effective SNR} $\lambda_{\eff} \colonequals \sqrt{(1 - \eta)F_{\sP}} \cdot \lambda$.
    The coordinate advantage of the model then behaves like that of an additive Gaussian model with SNR $\lambda_{\eff}$.
    It is an intriguing question what other aspects of a model can easily be ``factored in'' to an effective SNR in such generality.
\end{remark}

We now proceed to the second operation of \emph{quantization} of a channel's output.
We note that we consider a coarse notion of quantization into just one bit of information; it would also be interesting to consider quantizations on a finer grid (though the width of this grid would have to be $O(N^{-1/4})$ to obtain substantially different behavior from just taking the sign for additive $\sP$ and priors satisfying Assumptions P1--P2).

\begin{definition}[Quantization]
    Let $\sP$ be a channel with range $\Omega \subset \RR$ so that each $\sP_x$ is absolutely continuous with respect to Lebesgue measure.
    We define the \emph{quantized} version of $\sP$ to be the channel $\sgn(\sP)$ with range $\{-1, 1\}$ (the value zero may be mapped arbitrarily), where to sample from the measure $\sgn(\sP)_x$ we sample $y \sim \sP_x$ and output $\sgn(y)$.
\end{definition}

\begin{theorem}[Quantized additive channels]
    \label{thm:quantization}
    Let $\sP$ be an additive noise channel as in Proposition~\ref{prop:channel-additive}, with underlying density $p(y)$.
    If $p$ satisfies the conditions of the Proposition, then $\sgn(\sP)$ satisfies Assumptions C1 and C2, and has Fisher information $F_{\sgn(\sP)} = 4p(0)^2$.
\end{theorem}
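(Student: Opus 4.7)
The strategy is a direct computation of the channel overlap $R_{\sgn(\sP)}$, from which both Assumptions C1, C2 and the Fisher information follow at once. The key observation is that since $\sgn(\sP)_x$ is a measure on the two-point set $\{-1, 1\}$, the overlap factorizes as a product of one-variable functions, which reduces everything to elementary calculus on a single scalar function.

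First I would compute the pushforward measure. Let $F(t) = \int_{-\infty}^t p(u)\,du$ be the CDF of the noise $\rho$. For $y \sim \sP_x$, we have $\PP[\sgn(y) = 1] = \PP[z > -x] = 1 - F(-x) = F(x)$, using the symmetry $p(-y) = p(y)$ which gives $F(-t) = 1 - F(t)$. In particular, $F(0) = 1/2$, so $\sgn(\sP)_0$ is the uniform measure on $\{-1, 1\}$. The likelihood ratio $d\sgn(\sP)_x / d\sgn(\sP)_0$ equals $2F(x)$ at $y = 1$ and $2(1 - F(x))$ at $y = -1$; setting $g(x) := 2F(x) - 1$, one sees that this ratio minus one equals $g(x)$ at $y = 1$ and $-g(x)$ at $y = -1$.

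Next I would plug into the definition of the channel overlap. Averaging over the uniform measure on $\{-1, 1\}$:
\begin{equation}
    R_{\sgn(\sP)}(x^{(1)}, x^{(2)}) = \tfrac{1}{2}g(x^{(1)})g(x^{(2)}) + \tfrac{1}{2}(-g(x^{(1)}))(-g(x^{(2)})) = g(x^{(1)})g(x^{(2)}).
\end{equation}
Since $p$ is $\sC^4$ on $\RR$, $F$ is $\sC^5$, so $g$ is $\sC^5$, and therefore $R_{\sgn(\sP)}$ is $\sC^5$ on $\RR^2$ (in particular $\sC^4$ on an open set containing $[-A,A]^2$); this verifies Assumption~C1. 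For Assumption~C2, compute
\begin{equation}
    \frac{\partial^3 R_{\sgn(\sP)}}{\partial x^{(1)^2}\,\partial x^{(2)}}(0,0) = g''(0)\,g'(0).
\end{equation}
Here $g'(x) = 2p(x)$ and $g''(x) = 2p'(x)$. Because $p$ is even and $\sC^1$, its derivative $p'$ is odd, so $p'(0) = 0$ and hence $g''(0) = 0$. Thus the mixed partial above vanishes.

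Finally, the Fisher information is
\begin{equation}
    F_{\sgn(\sP)} = \frac{\partial^2 R_{\sgn(\sP)}}{\partial x^{(1)}\,\partial x^{(2)}}(0,0) = g'(0)^2 = (2p(0))^2 = 4p(0)^2,
\end{equation}
which is the claimed formula. The only substantive input used beyond routine differentiation is that symmetry of $p$ forces $p'(0) = 0$, which is what makes Assumption~C2 hold for free; this is the one place where the hypotheses on $p$ are doing real work, and it is not an obstacle so much as the crux of the argument.
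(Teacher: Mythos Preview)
Your proof is correct and follows essentially the same approach as the paper: compute the likelihood ratio of $\sgn(\sP)_x$ against the uniform base measure, write down the channel overlap, and differentiate. Your explicit factorization $R_{\sgn(\sP)}(x^{(1)},x^{(2)}) = g(x^{(1)})g(x^{(2)})$ is a tidy simplification that the paper leaves implicit, and it lets you verify Assumptions~C1 and~C2 more transparently than the paper's terse treatment, but the underlying argument is the same.
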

\noindent
Again, the result follows by a simple calculation, but immediately pinpoints computational thresholds of quantized spiked matrix and tensor models in a simple way depending on the density of additive noise.

\begin{example}[Quantized additive Gaussian noise]
    \label{ex:quantized-additive-gaussian}
    Taking $\sP_x = \sN(x, 1)$, we find $F_{\sgn(\sP)} = 4(\frac{1}{\sqrt{2\pi}})^2 = \frac{2}{\pi}$, which is indeed smaller than $F_{\sP} = 1$.
    We find that, e.g., the computational threshold for LCDF for a spiked matrix model with the channel $\sgn(\sN(x, 1))$ is $\lambda > 1 / \sqrt{F_{\sgn(\sP)}} = \sqrt{\pi / 2}$.
\end{example}

\noindent
And, again, we do not know what algorithms would match this threshold, but we conjecture that spectral algorithms should do so and make the corresponding Conjecture~\ref{conj:signed-rmt}.

The expression $4p(0)^2$ is rather mysterious; it is not even obvious from it that quantizing decreases the Fisher information, though it must by the aforementioned data processing inequality of \cite{Zamir-1998-FisherInformationDataProcessing}.
Seeking to the maximize the quantized Fisher information for a given unquantized Fisher information, we find the following surprising result.
In words, it says that there are additive noise models with smooth noise distributions so that quantization has an arbitrarily small effect on computational thresholds for hypothesis testing!
\begin{corollary}
    \label{cor:exp-quantization}
    For every $\epsilon > 0$, there exists a smooth and strictly positive probability density $p_{\epsilon}$ such that, if $\sP$ is the additive noise channel with this density, then $(1 - \epsilon)F_{\sP} \leq F_{\sgn(\sP)} \leq F_{\sP}$.
    Moreover, the $p_{\epsilon}$ may be chosen to converge pointwise to another positive (but not smooth) probability density, $p_{\epsilon}(y) \to p_0(y) \colonequals \frac{1}{2}\exp(-|y|)$ as $\epsilon \to 0$.
\end{corollary}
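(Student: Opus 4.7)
The plan is to exploit the fact that the Laplace density $p_0(y) = \frac{1}{2}e^{-|y|}$ is exactly critical for the inequality $F_{\sgn(\sP)} \leq F_{\sP}$: a direct computation (using $p_0'(y) = -\sgn(y)\,p_0(y)$ for $y \neq 0$) gives $F_{\sP_0} = \int (p_0')^2/p_0\,dy = \int p_0\,dy = 1$, while also $4 p_0(0)^2 = 1$. In fact, this is the extremal case of the Cauchy--Schwarz bound $p(0) = \int_0^{\infty}|p'|\,dy \leq \sqrt{\int_0^{\infty}(p')^2/p}\cdot\sqrt{\int_0^{\infty}p} = \tfrac{1}{2}\sqrt{F_{\sP}}$, valid for any symmetric unimodal $p$, with equality iff $|p'| \propto p$, i.e.\ for the Laplace density. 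The only obstruction to using $p_0$ itself in Corollary~\ref{cor:exp-quantization} is its lack of smoothness at the origin, so the natural strategy is to mollify.

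Concretely, I would fix a symmetric, nonnegative, $\sC^{\infty}$ mollifier $\phi_{\delta}$ supported in $[-\delta,\delta]$ with $\int \phi_{\delta} = 1$, and set $p_{\delta} \colonequals p_0 * \phi_{\delta}$. Then $p_{\delta}$ is smooth, symmetric, strictly positive, and a probability density, and $p_{\delta}(y) \to p_0(y)$ pointwise as $\delta \to 0$ by continuity of $p_0$ and standard mollifier estimates. The key algebraic observation is that for $|y| > \delta$ the sign of $y - z$ is constant over $z \in \supp\phi_{\delta}$, so
\begin{equation}
p_{\delta}'(y) \;=\; \int p_0'(y-z)\,\phi_{\delta}(z)\,dz \;=\; -\sgn(y)\int p_0(y-z)\,\phi_{\delta}(z)\,dz \;=\; -\sgn(y)\,p_{\delta}(y).
\end{equation}
Hence $(p_{\delta}')^2/p_{\delta} = p_{\delta}$ on $\{|y|>\delta\}$, and the contribution of that region to $F_{\sP_{\delta}}$ equals $\int_{|y|>\delta}p_{\delta}(y)\,dy$, which lies in $[1-\delta,1]$. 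On $[-\delta,\delta]$, the crude estimates $|p_{\delta}'| \leq \|p_0'\|_{\infty} = 1/2$ and $p_{\delta} \geq \tfrac{1}{2}e^{-\delta}$ bound the contribution by $O(\delta)$. Thus $F_{\sP_{\delta}} = 1 + O(\delta)$. For the quantized channel, Theorem~\ref{thm:quantization} gives $F_{\sgn(\sP_{\delta})} = 4p_{\delta}(0)^2$, and $p_{\delta}(0) = \int \tfrac{1}{2}e^{-|z|}\phi_{\delta}(z)\,dz \in [\tfrac{1}{2}e^{-\delta},\tfrac{1}{2}]$, so $F_{\sgn(\sP_{\delta})} = 1 + O(\delta)$ as well.

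Combining, $F_{\sgn(\sP_{\delta})}/F_{\sP_{\delta}} \to 1$ as $\delta \to 0$; choosing $\delta = \delta(\epsilon)$ small enough delivers the lower bound $(1-\epsilon)F_{\sP_{\delta}} \leq F_{\sgn(\sP_{\delta})}$, while the matching upper bound $F_{\sgn(\sP_{\delta})} \leq F_{\sP_{\delta}}$ is the Fisher-information data-processing inequality cited in Section~\ref{sec:related} (and also follows from the Cauchy--Schwarz argument above). Setting $p_{\epsilon} \colonequals p_{\delta(\epsilon)}$ and verifying the hypotheses of Proposition~\ref{prop:channel-additive} is immediate from the mollifier construction. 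I do not expect a major obstacle: the whole argument rests on recognizing that Laplace is the right candidate to mollify, which is motivated by the Cauchy--Schwarz extremal argument above and which makes the $|y|>\delta$ computation collapse to the trivial identity $(p')^2/p = p$. Once that identity is invoked, the remaining estimates are elementary, and the only care needed is in the shrinking region $[-\delta,\delta]$, which has vanishing measure.
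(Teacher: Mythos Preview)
Your proposal is correct and follows the same overall strategy as the paper: recognize the Laplace density $p_0(y)=\tfrac{1}{2}e^{-|y|}$ as extremal for the inequality $4p(0)^2\le F_{\sP}$, then smooth it. The paper arrives at $p_0$ via a calculus-of-variations computation (setting the first variation of $F[p]=\int (p')^2/p$ to zero subject to $p(0)$ fixed) and suggests convolving with a Gaussian of small variance, without spelling out the Fisher-information estimates. Your route is somewhat more elementary on both counts: the Cauchy--Schwarz identification of $p_0$ is a one-line argument rather than a variational one, and your choice of a compactly supported mollifier (rather than a Gaussian) buys the exact identity $p_\delta'(y)=-\sgn(y)\,p_\delta(y)$ on $\{|y|>\delta\}$, which collapses the main contribution to $F_{\sP_\delta}$ to $\int_{|y|>\delta}p_\delta$ and leaves only an $O(\delta)$ boundary term to bound crudely. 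With a Gaussian mollifier this identity fails everywhere and one would need a dominated-convergence argument instead. So the two approaches are morally identical, but your execution is tidier and more self-contained.
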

\noindent
We give a partial account of this paradoxical result in Section~\ref{sec:quantization}.

\section{Latent variable models}

As mentioned earlier, we will work at first over a more general model than the CLVMs of Definition~\ref{def:clvm}.
The following definition is more general in two important ways, and a third less material one: first, it does not ask for the domain of $\sX$ to be continuous; second, it does not require $y_i$ to be independent of all $x_j$ for $j \neq i$; and third, it allows for the $y_i$ to have different laws for different $i$ under the null model $\QQ$ and to be observed through different channels under the planted model $\PP$.
We hope this more abstract formulation will be useful for future work.
\begin{definition}[Latent variable model]
    \label{def:lvm}
    Let $\Sigma, \Omega$ be measurable spaces and $N \geq 1$.
    Let $\sX$ be a probability measure over $\Sigma^N$, $\sQ_1, \dots, \sQ_N$ probability measures over $\Omega$, and, for each $\bx \in \Sigma$ and $i \in [N]$, let $\sP_{i, \bx}$ be a probability measure over $\Omega$.
    A \emph{latent variable model (LVM)} specified by these objects $(\sX, \sQ, \sP)$ consists of the following pair of probability measures over $\Omega^N$:
    \begin{enumerate}
    \item Sample $\by \sim \QQ$ by sampling $y_i \sim \sQ_i$ for each $i \in [N]$ independently (i.e., $\QQ$ is the product measure $\QQ = \sQ_1 \otimes \cdots \otimes \sQ_N$).
    \item Sample $\by \sim \PP$ by first sampling $\bx \sim \sX$. Then, sample $y_i \sim \sP_{i, \bx}$ for each $i \in [N]$ independently (i.e., $\PP$ is the mixture of product measures $\sP_{1, \bx} \otimes \cdots \otimes \sP_{N, \bx}$ over $\bx \sim \sX$).
    \end{enumerate}
\end{definition}

\begin{definition}[Good LVM]
    We call an LVM \emph{good} if, for all $\bx \in \Sigma$ and $i \in [N]$, $\sP_{i, \bx}$ is absolutely continuous with respect to $\sQ_i$ and $d\sP_{i, \bx} / d\sQ_i \in L^2(\sQ_i)$.
\end{definition}

\begin{remark}[Side information]
    We will not explore it further here, but one interesting setting that may be described if we let the channels vary over different coordinates is \emph{side information}, where we either reveal some coordinates of $\bx$ directly, or give in addition to $\by$ a vector correlated entrywise with $\bx$.
    See \cite{SN-2018-SBMSideInformation} for an example in the context of the stochastic block model, or \cite{DSMM-2018-ContextualSBM} for a ``contextual'' version of the model, which amounts to including a less direct form of side information.
\end{remark}

Over this broader class of models, we will prove the following result, which is a generalization of Theorem~\ref{thm:lvm}.

\begin{theorem}[Coordinate advantage for LVMs]
    \label{thm:lvm-general}
    Suppose $(\sX, \sQ, \sP)$ is a good LVM.
    Define the vector channel overlap
    \begin{equation}
        R_{\sQ, \sP, i}(\bx^{(1)}, \bx^{(2)}) \colonequals \Ex_{y \sim \sQ_i}\left[\left(\frac{d\sP_{i, \bx^{(1)}}}{d\sQ_i}(y) - 1\right)\left(\frac{d\sP_{i, \bx^{(2)}}}{d\sQ_i}(y) - 1\right)\right].
    \end{equation}
    Then,
    \begin{align}
      \CAdv(\sX, \sQ, \sP)^2
      &= \Ex_{\bx^{(1)}, \bx^{(2)} \sim \sX} \sum_{\substack{T \subseteq [N] \\ |T| \leq D}} \prod_{i \in T} R_{\sQ, \sP, i}(\bx^{(1)}, \bx^{(2)}) \\
      &\leq \Ex_{\bx^{(1)}, \bx^{(2)} \sim \sX} \exp^{\leq D}\left(\sum_{i = 1}^NR_{\sQ, \sP, i}(\bx^{(1)}, \bx^{(2)})\right).
    \end{align}
\end{theorem}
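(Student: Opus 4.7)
The plan is to identify $\CAdv_{\leq D}(\sX, \sQ, \sP)^2$ with the squared $L^2(\QQ)$-norm of the projection of the likelihood ratio $L := d\PP/d\QQ$ onto $V_{\leq D}$ and compute that projection via the Efron-Stein decomposition; the exponential upper bound then reduces to a positive-semidefinite kernel argument. Throughout, write $L_{i,\bx}(y) := d\sP_{i,\bx}/d\sQ_i(y)$, so that by disintegration $L(\by) = \Ex_{\bx \sim \sX}\prod_{i=1}^N L_{i,\bx}(y_i)$. A standard Cauchy-Schwarz duality argument (the maximum of $\Ex_\PP f = \langle f, L\rangle_{L^2(\QQ)}$ over $f \in V_{\leq D}$ with $\|f\|_{L^2(\QQ)} \leq 1$ is attained at the unit-normalized projection of $L$) gives $\CAdv_{\leq D}^2 = \|P_{\leq D} L\|_{L^2(\QQ)}^2$, where $P_{\leq D}$ is orthogonal projection onto $V_{\leq D}$.

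Because $\QQ = \sQ_1 \otimes \cdots \otimes \sQ_N$ is a product measure, $L^2(\QQ)$ admits the Efron-Stein orthogonal decomposition $\bigoplus_{S \subseteq [N]} V_{=S}$, with the projection onto $V_{=S}$ given by $P_{=S} = \prod_{i \in S}(I - E_i)\prod_{j \notin S} E_j$, where $E_i$ denotes conditional expectation integrating out $y_i$ against $\sQ_i$. Then $V_{\leq D} = \bigoplus_{|S| \leq D} V_{=S}$ and Parseval gives $\|P_{\leq D} L\|_{L^2(\QQ)}^2 = \sum_{|S| \leq D} \|P_{=S} L\|_{L^2(\QQ)}^2$. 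For fixed $\bx$ the tensor-product $L_\bx(\by) := \prod_i L_{i,\bx}(y_i)$ satisfies $\Ex_{y_i \sim \sQ_i}L_{i,\bx}(y_i) = 1$, and a short induction on $|S|$ shows $P_{=S} L_\bx(\by) = \prod_{i \in S}(L_{i,\bx}(y_i) - 1)$. By linearity $P_{=S} L(\by) = \Ex_\bx \prod_{i \in S}(L_{i,\bx}(y_i) - 1)$, and squaring with an independent replica $\bx^{(2)} \sim \sX$ while using independence of the $y_i$ under $\QQ$ to factor the inner expectation yields
\begin{equation}
\|P_{=S} L\|_{L^2(\QQ)}^2 = \Ex_{\bx^{(1)}, \bx^{(2)} \sim \sX} \prod_{i \in S} R_{\sQ, \sP, i}(\bx^{(1)}, \bx^{(2)}).
\end{equation}
Summing over $|S| \leq D$ gives the asserted equality.

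For the exponential bound, expand $(\sum_{i=1}^N R_{\sQ,\sP,i})^d$ as a sum over ordered tuples $(i_1, \ldots, i_d) \in [N]^d$; the all-distinct tuples contribute exactly $d! \sum_{|S|=d}\prod_{i \in S} R_{\sQ,\sP,i}$, so it suffices to prove that, after taking $\Ex_{\bx^{(1)}, \bx^{(2)}}$, the contribution of tuples with at least one repeated index is nonnegative. The main obstacle is that individual $R_{\sQ,\sP,i}(\bx^{(1)}, \bx^{(2)})$ need not have a definite sign, ruling out a termwise argument on ordered tuples. The resolution is that, with the feature map $\phi_i(\bx) := L_{i,\bx} - 1 \in L^2(\sQ_i)$, we have $R_{\sQ,\sP,i}(\bx^{(1)}, \bx^{(2)}) = \langle \phi_i(\bx^{(1)}), \phi_i(\bx^{(2)})\rangle_{L^2(\sQ_i)}$, which is positive-semidefinite as a kernel in $(\bx^{(1)}, \bx^{(2)})$. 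Products of PSD kernels remain PSD, so for any multiplicities $m_1, \dots, m_N \geq 0$ the kernel $\prod_i R_{\sQ,\sP,i}^{m_i}$ is PSD, and for i.i.d.\ $\bx^{(1)}, \bx^{(2)} \sim \sX$ its expectation equals the squared norm of $\Ex_{\bx}\bigotimes_i \phi_i(\bx)^{\otimes m_i}$ and is therefore nonnegative. Applying this with $m_i$ equal to the multiplicity of index $i$ in each repeated tuple yields the inequality.
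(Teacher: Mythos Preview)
Your proof is correct and follows essentially the same route as the paper: identify the coordinate advantage as the $L^2(\QQ)$-norm of the Efron--Stein projection of the likelihood ratio, compute $P_{=S}L_{\bx} = \prod_{i\in S}(L_{i,\bx}(y_i)-1)$ and its norm via two replicas, and deduce the exponential bound from nonnegativity of $\Ex_{\bx^{(1)},\bx^{(2)}}\prod_i R_{\sQ,\sP,i}^{m_i}$. Your PSD-kernel framing of that last step is exactly the content of the paper's Lemma~\ref{lem:non-neg}, whose proof likewise rewrites the expectation as a squared norm of $\Ex_{\bx}\prod_{i,j}(L_{i,\bx}(y_i^{(j)})-1)$ in a tensor-product Hilbert space.
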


\subsection{Coordinate decomposition of likelihood ratio}

We will use the background results on coordinate or Efron-Stein decompositions developed in Appendix~\ref{sec:coord-general}.
Recall that the optimizer in the coordinate advantage is the LCDLR, the orthogonal projection of the likelihood ratio to $V_{\leq D}$, so it suffices to understand this object.
Lemma~\ref{lem:lr-decomp} describes this projection (in a setting generalizing LVMs) in terms of the \emph{marginal likelihood ratios}.

We first introduce notation for the coordinate-wise likelihood ratios,
\begin{equation}
    L_{i, \bx} \colonequals \frac{d\sP_{i, \bx}}{d\sQ}.
\end{equation}
Because $\by \sim \QQ$ has independent coordinates and, conditional on $\bx$, $\by \sim \PP$ also has independent coordinates, the likelihood ratio in an LVM may be computed as
\begin{equation}
    L(\by) \colonequals \frac{d\PP}{d\QQ}(\by) = \Ex_{\bx \sim \sX} \frac{d\PP_{\bx}}{d\QQ}(\by) = \Ex_{\bx \sim \sX} \prod_{i = 1}^N \frac{d\sP_{i, \bx}}{d\sQ_i}(y_i) = \Ex_{\bx \sim \sX} \prod_{i = 1}^N L_{i, \bx}(y_i).
\end{equation}
Following Appendix~\ref{sec:coord-general}, the marginal likelihood ratios are then
\begin{equation}
    L_T(\by) \colonequals \Ex_{y_i \sim \sQ_i : i \in [N] \setminus T} L(\by) = \Ex_{\bx \sim \sX} \prod_{i \in T} L_{i, \bx}(y_i) = \frac{d\PP_T}{d\QQ_T}(\by),
\end{equation}
where $\PP_T$ and $\QQ_T$ are the measures arising from an LVM formed by restricting the given LVM to the indices $T$.

Appendix~\ref{sec:coord-general} further describes a decomposition of $V_{\leq D}$ into mutually orthogonal subspaces $\what{V}_T$ over subsets $T \subseteq [N]$ with $|T| \leq D$, and by Lemma~\ref{lem:lr-decomp}, the projection of $L$ to $\what{V}_T$ is
\begin{equation}
    \what{L}_T(\by) \colonequals \sum_{S \subseteq T} (-1)^{|T| - |S|} L_S = \Ex_{\bx \sim \sX} \sum_{S \subseteq T} (-1)^{|T| - |S|} \prod_{i \in S} L_{i, \bx}(y_i) = \Ex_{\bx \sim \sX} \prod_{i \in T} (L_{i, \bx}(y_i) - 1).
\end{equation}
Indeed, by inclusion-exclusion it follows readily that $L = \sum_{T \subseteq [N]} \what{L}_T$, and we detail the orthogonality of this decomposition in Appendix~\ref{sec:coord-remarks}.
The LCDLR, the projection of $L$ to $V_{\leq D}$, which we will denote $L_{\leq D}$, is then
\begin{equation}
    L_{\leq D} = \sum_{\substack{T \subseteq [N] \\ |T| \leq D}} \what{L}_T = \sum_{\substack{T \subseteq [N] \\ |T| \leq D}}\Ex_{\bx \sim \sX} \prod_{i \in T} (L_{i, \bx}(y_i) - 1). \label{eq:lcdlr}
\end{equation}

\subsection{Coordinate advantage: Proof of Theorems \ref{thm:lvm} and \ref{thm:lvm-general}}

We will be ready to proceed to the main proof after one more preliminary step.
\begin{lemma}[Non-negativity]
    \label{lem:non-neg}
    For any $k_i \geq 0$ integers for $i \in [N]$, we have
    \begin{equation}
        \Ex_{\bx^{(1)}, \bx^{(2)} \sim \sX} \prod_{i = 1}^NR_{\sQ, \sP, i}(\bx^{(1)}, \bx^{(2)})^{k_i} \geq 0.
    \end{equation}
\end{lemma}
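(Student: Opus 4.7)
The plan is to recognize $R_{\sQ,\sP,i}$ as a positive semidefinite kernel and exhibit the product $\prod_i R_{\sQ,\sP,i}(\bx^{(1)},\bx^{(2)})^{k_i}$ explicitly as an inner product in a Hilbert space, so that its double expectation becomes a squared norm.

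First I would observe that, by the definition of $R_{\sQ,\sP,i}$ and the goodness assumption (which guarantees $L_{i,\bx} - 1 \in L^2(\sQ_i)$), we have
\begin{equation}
R_{\sQ,\sP,i}(\bx^{(1)},\bx^{(2)}) = \langle L_{i,\bx^{(1)}} - 1,\, L_{i,\bx^{(2)}} - 1\rangle_{L^2(\sQ_i)}.
\end{equation}
Raising this to the $k_i$-th power is the same as pairing the tensor powers $(L_{i,\bx}-1)^{\otimes k_i}$ inside the Hilbert space $L^2(\sQ_i)^{\otimes k_i} \cong L^2(\sQ_i^{\otimes k_i})$, since for any Hilbert space $H$ and $u,v \in H$ one has $\langle u^{\otimes k}, v^{\otimes k}\rangle_{H^{\otimes k}} = \langle u,v\rangle_H^k$. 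Taking the tensor product over $i \in [N]$ then gives a single vector
\begin{equation}
\Phi(\bx) \colonequals \bigotimes_{i=1}^N (L_{i,\bx} - 1)^{\otimes k_i} \;\in\; \mathcal{H} \colonequals \bigotimes_{i=1}^N L^2(\sQ_i^{\otimes k_i}),
\end{equation}
and the multiplicativity of inner products across tensor factors yields
\begin{equation}
\langle \Phi(\bx^{(1)}), \Phi(\bx^{(2)})\rangle_{\mathcal{H}} \;=\; \prod_{i=1}^N R_{\sQ,\sP,i}(\bx^{(1)},\bx^{(2)})^{k_i}.
\end{equation}

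Next I would take expectation over independent draws $\bx^{(1)},\bx^{(2)} \sim \sX$ and pull it inside the inner product via Fubini (justified since $\Phi(\bx)$ has a finite $\mathcal{H}$-norm equal to $R_{\sQ,\sP,i}(\bx,\bx)^{k_i/2}$ in each factor, and one can always truncate $\sX$ to a set where these are bounded and pass to the limit; alternatively, interpret the Bochner integral $\mu \colonequals \Ex_{\bx \sim \sX} \Phi(\bx) \in \mathcal{H}$ directly). This gives
\begin{equation}
\Ex_{\bx^{(1)}, \bx^{(2)} \sim \sX}\prod_{i=1}^N R_{\sQ,\sP,i}(\bx^{(1)},\bx^{(2)})^{k_i} \;=\; \langle \mu, \mu\rangle_{\mathcal{H}} \;=\; \|\mu\|_{\mathcal{H}}^2 \;\geq\; 0,
\end{equation}
which is the claim.

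The main obstacle, such as it is, would be the integrability bookkeeping required to form the Bochner integral $\mu$; if one wishes to avoid that, an entirely elementary alternative is to invoke the Schur product theorem — the kernel $R_{\sQ,\sP,i}$ is positive semidefinite (being a Gram kernel of $L^2(\sQ_i)$ vectors), hence so are its entrywise powers and the entrywise product across $i$ — and then note that the double expectation of any psd kernel against a product of a measure with itself is non-negative, which is precisely the finite-dimensional content of the Hilbert-space argument above.
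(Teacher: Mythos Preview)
Your proof is correct and is essentially the same as the paper's, just phrased in more abstract language. The paper carries out exactly your construction concretely: it realizes the tensor Hilbert space $\bigotimes_{i} L^2(\sQ_i)^{\otimes k_i}$ as $L^2$ over independent copies $y_i^{(1)},\dots,y_i^{(k_i)} \sim \sQ_i$, writes your feature map $\Phi(\bx)$ as the function $\prod_{i}\prod_{j}(L_{i,\bx}(y_i^{(j)})-1)$, and then obtains the squared norm by swapping the order of the $\bx$ and $y$ expectations---which is precisely your $\|\mu\|_{\sH}^2$ unpacked. The only practical difference is that the paper's explicit Fubini manipulation sidesteps the Bochner-integral bookkeeping you flag.
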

\begin{proof}
    We will just use that $R_{\sQ, \sP, i}$ is a product of inner products in $\sQ_i$, which may be viewed as a single inner product in $\QQ = \sQ_1 \otimes \cdots \otimes \sQ_N$, of the same function of $\bx^{(1)}$ and $\bx^{(2)}$.
    Explicitly, we may manipulate
    \begin{align*}
      &\Ex_{\bx^{(1)}, \bx^{(2)} \sim \sX} \prod_{i = 1}^NR_{\sQ, \sP, i}(\bx^{(1)}, \bx^{(2)})^{k_i} \\
      &= \Ex_{\bx^{(1)}, \bx^{(2)} \sim \sX} \prod_{i = 1}^N\left(\Ex_{y_i \sim \sQ_i}(L_{i, \bx^{(1)}}(y_i) - 1)(L_{i, \bx^{(2)}}(y_i) - 1)\right)^{k_i}
        \intertext{and writing a power of an expectation as an expectation of a product of independent copies,}
      &= \Ex_{\bx^{(1)}, \bx^{(2)} \sim \sX} \Ex_{\substack{y_i^{(1)}, \dots, y_i^{(k_i)} \sim \sQ_i \\ \text{for each } i \in [N]}}\prod_{i = 1}^N \prod_{j = 1}^{k_i}(L_{i, \bx^{(1)}}(y_i^{(j)}) - 1)(L_{i, \bx^{(2)}}(y_i^{(j)}) - 1) \\
      &= \Ex_{\substack{y_i^{(1)}, \dots, y_i^{(k_i)} \sim \sQ_i \\ \text{for each } i \in [N]}}\left(\Ex_{\bx \sim \sX}\prod_{i = 1}^N \prod_{j = 1}^{k_i}(L_{i, \bx^{(1)}}(y_i^{(j)}) - 1)\right)^2 \\
      &\geq 0, \numberthis
    \end{align*}
    completing the proof.
\end{proof}

\begin{proof}[Proof of Theorem~\ref{thm:lvm-general}]
    Recall that the coordinate advantage is the norm in $L^2(\QQ)$ of the LCDLR, which, by \eqref{eq:lcdlr} and the orthogonality of the $\what{L}_T$, is
    \begin{equation}
        \CAdv_{\leq D}(\sX, \sQ, \sP)^2 = \Ex_{\by \sim \QQ} L_{\leq D}(\by)^2 = \sum_{\substack{T \subseteq [N] \\ |T| \leq D}} \Ex_{\by \sim \QQ} \what{L}_T(\by)^2.
    \end{equation}
    Rewriting a squared expectation as an expectation of a product of two independent copies (as in the approach to the polynomial advantage of \cite{KWB-2022-LowDegreeNotes}),
    \begin{align*}
      \Ex_{\by \sim \QQ} \what{L}_T(\by)^2
      &= \Ex_{\by \sim \QQ} \what{L}_T(\by)^2 \\
      &= \Ex_{\by \sim \QQ} \left(\Ex_{\bx \sim \sX} \prod_{i \in T}\big( L_{i, \bx}(y_i) - 1 \big)\right)^2 \\
      &= \Ex_{\bx^{(1)}, \bx^{(2)} \sim \sX} \Ex_{\by \sim \QQ} \prod_{i \in T}\big( L_{i, \bx^{(1)}}(y_i) - 1 \big) \big( L_{i, \bx^{(2)}}(y_i) - 1 \big) \\
      &= \Ex_{\bx^{(1)}, \bx^{(2)} \sim \sX} \prod_{i \in T} R_{\sQ, \sP, i}(\bx^{(1)}, \bx^{(2)}), \numberthis
    \end{align*}
    where $\bx^{(1)}, \bx^{(2)} \sim \sX$ are independent draws.
    Summing over $T$ we find the first form of our result,
    \begin{align*}
      \CAdv_{\leq D}(\sX, \sQ, \sP)^2
      &= \Ex_{\bx^{(1)}, \bx^{(2)} \sim \sX} \sum_{\substack{T \subseteq [N] \\ |T| \leq D}} \prod_{i \in T} R_{\sQ, \sP, i}(\bx^{(1)}, \bx^{(2)})
  \intertext{and to obtain the bound we claim, note that the extra terms introduced when expanding the following expression by the binomial theorem are all non-negative upon taking the expectation over $\bx^{(1)}, \bx^{(2)}$ by Lemma~\ref{lem:non-neg}:}
  &\leq \Ex_{\bx^{(1)}, \bx^{(2)} \sim \sX} \sum_{d = 0}^D \frac{1}{d!} \left(\sum_{i = 1}^N R_{\sQ, \sP, i}(\bx^{(1)}, \bx^{(2)}) \right)^d \\
  &= \Ex_{\bx^{(1)}, \bx^{(2)} \sim \sX} \exp^{\leq D} \left(\sum_{i = 1}^N R_{\sQ, \sP, i}(\bx^{(1)}, \bx^{(2)})\right), \numberthis
\end{align*}
completing the proof.
\end{proof}

\section{Channel universality}

\subsection{Loose universality: Proof of Theorem~\ref{thm:channel-universality}}
\label{sec:pf:thm:channel-universality}

We now work under the more specific CLVM setting (Definition~\ref{def:clvm}).
We define notation for the likelihood ratios associated to the channel $\sP$ and recall the definition of the channel overlap, which may be rephrased in terms of these:
\begin{align}
  L_x &\colonequals \frac{d\sP_x}{d\sP_0}, \\
  R_{\sP}(x^{(1)}, x^{(2)}) &= \Ex_{y \sim \sP_0}\left[\left(\frac{d\sP_{x^{(1)}}}{d\sP_0}(y) - 1\right)\left(\frac{d\sP_{x^{(2)}}}{d\sP_0}(y) - 1\right)\right] \nonumber \\
      &= \Ex_{y \sim \sP_0}\left[\left(L_{x^{(1)}}(y) - 1\right)\left(L_{x^{(2)}}(y) - 1\right)\right] \nonumber \\
  &= \Ex_{y \sim \sP_0}\left[L_{x^{(1)}}(y)L_{x^{(2)}}(y)\right] - 1.
\end{align}
Let us first clarify the connection between our definition of the Fisher information and the more conventional one.
\begin{proposition}[Fisher information]
    \label{prop:fisher-info-equiv}
    Let $\sP$ be a channel so that, for each $y \in \Omega$, $L_x(y)$ is $\sC^1$ in $x$ in a neighborhood of $x = 0$.
    Then,
    \begin{equation}
        F_{\sP} = \frac{\partial^2R_{\sP}}{\partial x^{(1)} \partial x^{(2)}}(0, 0) = \Ex_{y \sim \sP_0} \left(\frac{\partial}{\partial x} L_x(y)\bigg|_{x = 0}\right)^2.
    \end{equation}
\end{proposition}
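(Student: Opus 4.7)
The plan is to compute $\frac{\partial^2 R_{\sP}}{\partial x^{(1)} \partial x^{(2)}}(0,0)$ by differentiating under the expectation twice, using the product form of $R_{\sP}$. Starting from
\begin{equation*}
R_{\sP}(x^{(1)}, x^{(2)}) = \Ex_{y \sim \sP_0}\left[(L_{x^{(1)}}(y) - 1)(L_{x^{(2)}}(y) - 1)\right],
\end{equation*}
the factorization of the integrand as a product of a function of $x^{(1)}$ and a function of $x^{(2)}$ means that the mixed partial pulls through cleanly: formally,
\begin{equation*}
\frac{\partial^2 R_{\sP}}{\partial x^{(1)} \partial x^{(2)}}(x^{(1)}, x^{(2)}) = \Ex_{y \sim \sP_0}\left[\frac{\partial L_{x^{(1)}}}{\partial x^{(1)}}(y) \, \frac{\partial L_{x^{(2)}}}{\partial x^{(2)}}(y)\right],
\end{equation*}
and setting $x^{(1)} = x^{(2)} = 0$ produces the squared expectation on the right-hand side of the claim.

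The substantive step is justifying the interchange of differentiation and expectation. My plan is to invoke a standard differentiation-under-the-integral argument (e.g.\ dominated convergence applied to difference quotients): the $\sC^1$ hypothesis on $L_x(y)$ in a neighborhood of $x = 0$ gives pointwise convergence of the relevant difference quotients, and one then wants an integrable dominating function. Since Proposition~\ref{prop:fisher-info-equiv} is stated as a ``cleaner'' reformulation of the definition, I would either (i) state it under the additional implicit assumption that such a dominating function exists in $L^1(\sP_0)$---which is the usual condition for exchangeability---or (ii) observe that in the settings of interest here, namely those satisfying Assumption~C1, $R_{\sP}$ is a priori $\sC^4$ in an open set around the origin so that the mixed partial exists and equals the above integral as an exercise in real analysis. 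This is the only genuine technical step.

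Finally, I would briefly remark on how this recovers the standard statistical Fisher information. Since $L_0 \equiv 1$, the identity $\partial_x L_x = L_x \, \partial_x \log L_x$ evaluated at $x = 0$ gives
\begin{equation*}
\frac{\partial}{\partial x} L_x(y)\bigg|_{x=0} = \frac{\partial}{\partial x} \log L_x(y)\bigg|_{x=0} = \frac{\partial}{\partial x} \log \frac{d\sP_x}{d\sP_0}(y)\bigg|_{x=0},
\end{equation*}
so the conclusion matches the conventional score-squared expression $F_{\sP}(0)$, confirming that our $F_{\sP}$ coincides with the Fisher information of the family $(\sP_x)$ evaluated at $x = 0$.
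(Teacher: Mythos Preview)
Your proposal is correct and follows essentially the same approach as the paper, which simply states that the result ``follows just by differentiating under the expectation twice.'' Your additional remarks on justifying the interchange and on recovering the score-squared form are helpful elaborations but not substantively different from the paper's (very terse) argument.
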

\noindent
This follows just by differentiating under the expectation twice.

We may also compute some other partial derivatives of $R_{\sP}$ directly.
\begin{proposition}
    \label{prop:channel-overlap-derivatives}
    Suppose that $R_{\sP}$ is $\sC^4$ in a neighborhood of the origin.
    Then,
    \begin{align}
      R_{\sP}(0, 0) &= 0, \\
      \frac{\partial^a R_{\sP}}{\partial x^{(1)^a}}(0, 0) = \frac{\partial^a R_{\sP}}{\partial x^{(2)^a}}(0, 0) &= 0 \text{ for all } a \in \{1, 2, 3, 4\}.
    \end{align}
\end{proposition}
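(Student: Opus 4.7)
\medskip

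\noindent\textbf{Proof proposal.} The plan is to observe that $R_{\sP}(x^{(1)}, x^{(2)})$ vanishes identically whenever one of its two arguments equals $0$, and then to read off the claimed partial derivative identities from this fact.

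First I would unpack the definition at the ``boundary'' $x^{(2)} = 0$. Since $(\sX, \sP)$ is a good CLVM, $L_0 = d\sP_0/d\sP_0 \equiv 1$ almost surely under $\sP_0$, so
\begin{equation}
R_{\sP}(x^{(1)}, 0) = \Ex_{y \sim \sP_0}\big[L_{x^{(1)}}(y)\cdot L_0(y)\big] - 1 = \Ex_{y \sim \sP_0}\big[L_{x^{(1)}}(y)\big] - 1.
\end{equation}
But the expectation on the right is exactly $\int L_{x^{(1)}}\, d\sP_0 = \int d\sP_{x^{(1)}} = 1$, using only that $\sP_{x^{(1)}}$ is absolutely continuous with respect to $\sP_0$ with Radon--Nikodym derivative $L_{x^{(1)}}$. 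Hence $R_{\sP}(x^{(1)}, 0) = 0$ for every $x^{(1)} \in \Sigma$, and symmetrically $R_{\sP}(0, x^{(2)}) = 0$ for every $x^{(2)} \in \Sigma$. In particular $R_{\sP}(0,0) = 0$.

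Now the partial derivative statements follow for free. Since $\Sigma$ contains an open interval around $0$ (good CLVM) and $R_{\sP}$ is $\sC^4$ in a neighborhood of the origin by hypothesis, the single-variable function $x^{(1)} \mapsto R_{\sP}(x^{(1)}, 0)$ is $\sC^4$ on an open interval around $0$ and identically zero there. Consequently all of its derivatives at $x^{(1)} = 0$ vanish, giving $\partial^a R_{\sP}/\partial x^{(1)^a}(0,0) = 0$ for $a \in \{1,2,3,4\}$. The same argument applied to $x^{(2)} \mapsto R_{\sP}(0, x^{(2)})$ yields the corresponding statement for $x^{(2)}$.

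I do not anticipate any real obstacle: the only subtlety is that we are swapping a derivative and an integral implicitly, but this is avoided by noting that $R_{\sP}(\cdot, 0)$ is already shown to be the zero function purely from the normalization $\int d\sP_{x^{(1)}} = 1$, without any interchange. So the $\sC^4$ regularity assumption is invoked only to guarantee that the derivatives in question are well-defined, and the vanishing is an algebraic consequence of $L_0 \equiv 1$ and the defining property of a Radon--Nikodym derivative.
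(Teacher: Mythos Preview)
Your proposal is correct and follows essentially the same approach as the paper: both arguments observe that $L_0 \equiv 1$ forces $R_{\sP}(x,0) = \Ex_{y \sim \sP_0} L_x(y) - 1 = 0$ identically in $x$, from which the vanishing of all pure partial derivatives at the origin is immediate. Your write-up is slightly more explicit about invoking the $\sC^4$ hypothesis and the fact that $\Sigma$ contains an interval around zero, but the underlying idea is identical.
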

\begin{proof}
    The first claim follows because $L_0 = d\sP_0 / d\sP_0 = 1$.
    The second claim follows because $R_{\sP}(x, 0) = \EE_{y \sim \sP_0} L_x(y) - 1 = \EE_{y \sim \sP_x} 1 - 1 = 0$ is a constant.
\end{proof}

The following bound on the truncated exponential polynomials, which is elementary but not trivial to show, will play an important role.
\begin{proposition}
    \label{prop:trunc-exp}
    Let $D \geq 2$ be even.
    Then, for all $x \in \RR$, we have
    \begin{equation}
        0 < \exp^{\leq D}(x) \leq \exp(|x|),
    \end{equation}
    and, for all $x, y \in \RR$, we have
    \begin{equation}
        \frac{\exp^{\leq D}(x)}{\exp(100|y|)} \leq \exp^{\leq D}(x + y) \leq \exp^{\leq D}(x) \exp(100|y|).
    \end{equation}
\end{proposition}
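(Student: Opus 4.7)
The plan is to base the proof on the integral representation
\begin{equation}
    \exp^{\leq D}(t) = \frac{1}{D!}\int_0^\infty (t+v)^D e^{-v}\,dv,
\end{equation}
which follows by expanding $(t+v)^D$ via the binomial theorem and using $\int_0^\infty v^k e^{-v}\,dv = k!$. Since $D$ is even the integrand is a.e.\ strictly positive, so $\exp^{\leq D}(x) > 0$; the upper bound $\exp^{\leq D}(x) \leq e^{|x|}$ then follows from positivity combined with the termwise triangle inequality $|\exp^{\leq D}(x)| \leq \sum_{k=0}^D |x|^k/k! \leq e^{|x|}$.

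For the sandwich, substituting $w = v + y$ in the integral representation gives
\begin{equation}
    \exp^{\leq D}(x + y) = \frac{e^y}{D!}\int_y^\infty (x+w)^D e^{-w}\,dw.
\end{equation}
When $y \geq 0$ the interval of integration shrinks, yielding $\exp^{\leq D}(x+y) \leq e^y\exp^{\leq D}(x)$; swapping the roles of $x$ and $x+y$ converts this into $\exp^{\leq D}(x+y) \geq e^y\exp^{\leq D}(x)$ for $y \leq 0$. Since $e^{|y|} \leq e^{100|y|}$, these two estimates already handle the upper bound for $y \geq 0$ and the lower bound for $y \leq 0$.

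The remaining two directions---the lower bound for $y \geq 0$ and the upper bound for $y \leq 0$---both reduce, after relabeling, to the one-sided claim $\exp^{\leq D}(x - s) \leq \exp^{\leq D}(x)\,e^{100s}$ for $x \in \RR$ and $s \geq 0$. The main obstacle is that $\exp^{\leq D}(x)$ can become as small as $(\text{const})^D/\sqrt{D}$ near its unique real critical point $x_* \approx -0.278\,D$ (where $\exp^{\leq D-1}(x_*) = 0$, by the classical fact that $\exp^{\leq m}$ has exactly one real zero for $m$ odd and none for $m$ even), so the ratio $\exp^{\leq D}(x-s)/\exp^{\leq D}(x)$ is genuinely large when $|x|$ is comparable to $D$. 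The plan is to prove the stronger uniform Lipschitz-type bound $|\exp^{\leq D-1}(x)/\exp^{\leq D}(x)| \leq C$ for some absolute constant $C$ much less than $100$, and then integrate this logarithmic derivative along $[x-s, x]$. The uniform bound is established by case analysis on $|x|/D$: when $|x| \ll D$ both $\exp^{\leq D-1}(x)$ and $\exp^{\leq D}(x)$ are close to $e^x$ so their ratio tends to $1$; when $|x| \gg D$ both are dominated by their leading monomials and the ratio is approximately $D/x \to 0$; and for $|x|$ comparable to $D$ one combines the lower bound $\exp^{\leq D}(x) \geq e^x$ (valid for $x \leq 0$ and $D$ even, from a non-negative remainder term in the integral representation) with classical Szeg\H{o}-type asymptotics for the partial exponential sums, the generous constant $100$ absorbing all error terms.
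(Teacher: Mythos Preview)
Your overall architecture matches the paper's: reduce both inequalities of the sandwich to a single direction, then control the logarithmic derivative $\exp^{\leq D-1}(t)/\exp^{\leq D}(t)$ uniformly in $t$ and $D$, and integrate. Your substitution $w = v+y$ is a pleasant shortcut for two of the four cases, though the paper's one-line reduction (the lower bound follows from the upper bound applied with $x \mapsto x+y$, $y \mapsto -y$) achieves the same economy.

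The real issue is the critical regime $|x|$ comparable to $D$. You propose to bound numerator and denominator separately, citing $\exp^{\leq D}(x) \geq e^x$ for $x\leq 0$ and ``Szeg\H{o}-type asymptotics'' for the rest. But the lower bound $e^x$ is far too weak here (it is $e^{-cD}$ while the true minimum of $\exp^{\leq D}$ is only polynomially small in $D$, of order $D^{-1/2}\cdot\mathrm{const}^D$), and the Szeg\H{o} asymptotics you would actually need are a nontrivial import from complex analysis---not a detail absorbed by the constant $100$, but the entire content of the estimate. As written, the proof has a genuine gap at exactly the point where the work lies.

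The paper avoids this by a simple but decisive reformulation: it writes
\[
\frac{\exp^{\leq D-1}(t)}{\exp^{\leq D}(t)} \;=\; 1 - I_D(t), \qquad I_D(t) \;=\; \frac{t^D}{\int_0^\infty e^{-s}(t+s)^D\,ds},
\]
and bounds $I_D(t)$ directly via elementary manipulations of the integral, splitting into three cases ($t$ bounded, $t$ large with $D \leq Ct$, and $t$ large with $D > Ct$) that each yield explicit numerical lower bounds on $1/I_D(-t)$. Taking $t_0 = C = 5$ gives $I_D \leq 99$ uniformly, hence the constant $100$. No asymptotics are needed; the integral representation does all the work. This is the step you are missing.
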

\noindent
We have not attempted to optimize the constant 100.
We defer the proof to Appendix~\ref{app:prop:trunc-exp}.
Intuitively, the latter inequality is a variation on the exponential identity $\exp(x + y) = \exp(x)\exp(y)$; one may check that the direct analog $\exp^{\leq D}(x + y) \leq \exp^{\leq D}(x)\exp^{\leq D}(y)$ unfortunately does not hold in general.

\begin{proof}[Proof of Theorem~\ref{thm:channel-universality}]
    The main idea is to approximate $R_{\sP}(x^{(1)}, x^{(2)}) \approx F_{\sP} x^{(1)}x^{(2)}$.
    Thus, let us define the remainder in this approximation:
    \begin{equation}
      \Delta(x^{(1)}, x^{(2)}) \colonequals R_{\sP}(x^{(1)}, x^{(2)}) - F_{\sP} x^{(1)}x^{(2)}.
  \end{equation}
  We first bound this remainder.
  By Assumption C1, $R_{\sP}$ is $\sC^4$ in an open set $U \supset [-A, A]^2$.
  Thus, consider the partial derivatives of $R_{\sP}$ at the origin up to order 3.
  By Assumption C2 and Proposition~\ref{prop:channel-overlap-derivatives}, the only non-zero one of these derivatives is the mixed second partial derivative, whose value is $F_{\sP}$ by definition.
  So, by the multivariate Taylor theorem with the Lagrange form of the remainder, for each $(x^{(1)}, x^{(2)}) \in [-A, A]^2$, there exist some $z^{(1)}, z^{(2)} \in [-A, A]$ such that
  \begin{equation}
      \Delta(x^{(1)}, x^{(2)}) = \sum_{\substack{k, \ell \geq 1 \\ k + \ell = 4}} \frac{1}{k! \, \ell!}\frac{\partial^4 R_{\sP}}{\partial x^{(1)^k} \partial x^{(2)^{\ell}}}(z^{(1)}, z^{(2)}) x^{(1)^k} x^{(2)^{\ell}}.
  \end{equation}
  Define the following, which is finite because $R_{\sP}$ is $\sC^4$ on $U$:
  \begin{equation}
      K = K(\sP) \colonequals \max_{\substack{k, \ell \geq 1 \\ k + \ell = 4}} \max_{z^{(1)}, z^{(2)} \in [-A, A]^2} \left|\frac{\partial^4 R_{\sP}}{\partial x^{(1)^k} \partial x^{(2)^{\ell}}}(z^{(1)}, z^{(2)})\right|.
  \end{equation}
  We may bound
  \begin{equation}
      |\Delta(x^{(1)}, x^{(2)})| \leq K \sum_{\substack{k, \ell \geq 1 \\ k + \ell = 4}} |x^{(1)}|^k |x^{(2)}|^{\ell}
  \end{equation}

  Now, we consider the sum of overlaps appearing in the upper bound of Theorem~\ref{thm:lvm}:
  \begin{equation}
      \sum_{i = 1}^N R_{\sP}(x_i^{(1)}, x_i^{(2)}) = F_{\sP} \langle \bx^{(1)}, \bx^{(2)} \rangle + \underbrace{\sum_{i = 1}^N \Delta(x^{(1)}, x^{(2)})}_{\colonequals \Delta(\bx^{(1)}, \bx^{(2)})}.
  \end{equation}
  We may bound this summed error term by
  \begin{align*}
    |\Delta(\bx^{(1)}, \bx^{(2)})|
    &\leq \sum_{i = 1}^N |\Delta(x^{(1)}_i, x^{(2)}_i)| \\
    &\leq K\sum_{\substack{k, \ell \geq 1 \\ k + \ell = 4}} \sum_{i = 1}^N |x^{(1)}_i|^k |x^{(2)}_i|^{\ell}
      \intertext{and by the Cauchy-Schwarz inequality,}
    &\leq K\sum_{\substack{k, \ell \geq 1 \\ k + \ell = 4}} \|\bx^{(1)}\|_{2k}^k \|\bx^{(2)}\|_{2\ell}^{\ell}
    \intertext{and by Assumption P2, when $\bx^{(1)}, \bx^{(2)} \sim \sX$, almost surely we have}
    &\leq K\sum_{\substack{k, \ell \geq 1 \\ k + \ell = 4}} B_{2k}^k B_{2\ell}^{\ell} N^{1 - \frac{k + \ell}{4}} \\
    &= K\sum_{\substack{k, \ell \geq 1 \\ k + \ell = 4}} B_{2k}^k B_{2\ell}^{\ell} \\
    &\colonequals C \numberthis
  \end{align*}
  which is a constant $C = C(\sP)$ depending only on $\sP, B_2, B_4, B_6$.
  In summary, we have found that, almost surely when $\bx^{(1)}, \bx^{(2)} \sim \sX$, we have
  \begin{equation}
      \left|\sum_{i = 1}^N R_{\sP}(x_i^{(1)}, x_i^{(2)}) - F_{\sP} \langle \bx^{(1)}, \bx^{(2)} \rangle\right| \leq C.
      \label{eq:univ-overlap-bound}
  \end{equation}

  Recall that we must prove both upper and lower bounds on the coordinate advantage.
  The upper bound follows by plugging \eqref{eq:univ-overlap-bound} into the upper bound of Theorem~\ref{thm:lvm} and then applying Proposition~\ref{prop:trunc-exp}.

  For the lower bound, we first make a general calculation giving a lower bound counterpart to Theorem~\ref{thm:lvm}.
  For the sake of conciseness, in an expectation over $\bx^{(1)}, \bx^{(2)}$, let us write $R_i \colonequals R_{\sP}(x_i^{(1)}, x_i^{(2)})$.
  We have:
  \begin{align*}
    &\Ex_{\bx^{(1)}, \bx^{(2)} \sim \sX}\left[\exp^{\leq D}\left(\sum_{i = 1}^N R_i\right) - \left(\sum_{i = 1}^N R_i^2\right) \exp^{\leq D - 2}\left(\sum_{i = 1}^N R_i\right)\right] \\
    &= 1 + \Ex_{\bx^{(1)}, \bx^{(2)} \sim \sX} \sum_{i = 1}^N R_i + \sum_{d = 2}^D\Ex_{\bx^{(1)}, \bx^{(2)} \sim \sX}\left[\frac{1}{d!}\left(\sum_{i = 1}^N R_i\right)^d - \frac{1}{(d - 2)!} \left(\sum_{i = 1}^N R_i^2\right)\left(\sum_{i = 1}^N R_i\right)^{d - 2}\right]
      \intertext{and here, expanding the powers inside, the coefficient of a given $\prod_{i = 1}^N R_i^{k_i}$ with some $k_i \geq 2$ will be at most $\frac{1}{k_1! \cdots k_N!} - \frac{k_i!}{k_1! \cdots k_N!} \leq 0$. Thus, together with Lemma~\ref{lem:non-neg}, we find}
    &\leq 1 + \Ex_{\bx^{(1)}, \bx^{(2)} \sim \sX} \sum_{i = 1}^N R_i + \sum_{d = 2}^D \Ex_{\bx^{(1)}, \bx^{(2)} \sim \sX} \sum_{\substack{T \subseteq [N] \\ |T| = d}} \prod_{i \in T} R_i \\
    &= \CAdv_{\leq D}(\sX, \sP). \numberthis
  \end{align*}
  by the formula of Theorem~\ref{thm:lvm}.
  By \eqref{eq:univ-overlap-bound}, we have almost surely when $\bx^{(1)}, \bx^{(2)} \sim \sX$ that
  \begin{align*}
    \sum_{i = 1}^N R_i^2
    &= \sum_{i = 1}^N R_{\sP}(x^{(1)}_i, x^{(2)}_i)^2 \\
    &\leq 2F_{\sP}^2 \sum_{i = 1}^N x_i^{(1)^2} x_i^{(2)^2} + \sum_{i = 1}^N \Delta(x^{(1)}_i, x^{(2)}_i)^2 \\
    &\leq 2F_{\sP}^2 \|\bx^{(1)}\|_4^2\|\bx^{(2)}\|_4^2 + 3K^2 \sum_{\substack{k, \ell \geq 1 \\ k + \ell = 4}} \sum_{i = 1}^N |x^{(1)}_i|^{2k}|x^{(2)}_i|^{2\ell} \\
    &\leq 2F_{\sP}^2 \|\bx^{(1)}\|_4^2\|\bx^{(2)}\|_4^2 + 3K^2 \sum_{\substack{k, \ell \geq 1 \\ k + \ell = 4}} \|\bx^{(1)}\|_{4k}^{2k} \|\bx^{(2)}\|_{4\ell}^{2\ell}, \numberthis
  \end{align*}
  and bounding these norms with the assumptions on $\sX$ as above shows that this bounded by a constant depending only on $\sP$.
  The proof of the lower bound is then completed by using Proposition~\ref{prop:trunc-exp} twice more.
\end{proof}

\begin{remark}[Variable channels]
    \label{rem:different-channels}
    It is straightforward to extend the result to allow for different channels $\sP_i$ per coordinate, with different Fisher informations $F_{\sP_i}$, so long as the fourth mixed partial derivatives of the associated channel overlaps $R_{\sP_i}$ are uniformly bounded on an open set containing $[-A, A]$, where $A$ is as in Assumption P1 on $\sX$.
    In this case, $F_{\sP} \langle \bx^{(1)}, \bx^{(2)} \rangle$ would be replaced by $\bx^{(1)^{\top}} \bF \bx^{(2)}$ for $\bF$ the diagonal matrix of the $F_{\sP_i}$.
    More generally, it seems plausible that similar results should hold even if $y_i$ can depend on all of $\bx$ rather than just $x_i$, so that the channels $\sP_{i, \bx}$ depend on the entire vector $\bx$ (as in a general LVM in the setting of Theorem~\ref{thm:lvm-general}).
    The overlap expression would then involve the \emph{Fisher information matrix} of the channel, part of the Hessian matrix of the now multivariate analog of $R_{\sP}$.
    We do not pursue this here as it would require more complicated conditions on the prior $\sX$ and would make our calculations more elaborate.
\end{remark}

\subsection{Tight universality: Proof of Theorem~\ref{thm:dilution}}
\label{sec:pf:thm:dilution}

The following is immediate from the definition of dilution.
\begin{proposition}[Overlap invariance]
    For any $\sX$ and any $k \geq 1$, the law of $\langle \bx^{(1)}, \bx^{(2)} \rangle$ is the same for $\bx^{(1)}, \bx^{(2)} \sim \sX$ independently as for $\bx^{(1)}, \bx^{(2)} \sim \sD_k \sX$ independently.
\end{proposition}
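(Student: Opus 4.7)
The plan is to couple the two distributions and compute the inner product directly. Specifically, I would draw $\bx^{(1)}, \bx^{(2)} \sim \sX$ independently and form $\widetilde{\bx}^{(j)} \colonequals \bx^{(j)} \otimes \tfrac{1}{\sqrt{k}}\one_k \in \RR^{kN}$ for $j \in \{1, 2\}$. Then $(\widetilde{\bx}^{(1)}, \widetilde{\bx}^{(2)})$ is, by definition of $\sD_k \sX$, a pair of independent draws from $\sD_k \sX$, so it suffices to verify that under this coupling $\langle \widetilde{\bx}^{(1)}, \widetilde{\bx}^{(2)} \rangle = \langle \bx^{(1)}, \bx^{(2)} \rangle$ almost surely.

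The key step is the tensor product identity $\langle a \otimes b, c \otimes d \rangle = \langle a, c \rangle \langle b, d \rangle$. Applying it here gives
\begin{equation*}
\langle \widetilde{\bx}^{(1)}, \widetilde{\bx}^{(2)} \rangle = \Big\langle \bx^{(1)} \otimes \tfrac{1}{\sqrt{k}}\one_k,\; \bx^{(2)} \otimes \tfrac{1}{\sqrt{k}}\one_k \Big\rangle = \langle \bx^{(1)}, \bx^{(2)} \rangle \cdot \tfrac{1}{k} \langle \one_k, \one_k \rangle = \langle \bx^{(1)}, \bx^{(2)} \rangle,
\end{equation*}
since $\langle \one_k, \one_k \rangle = k$. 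The two scalar random variables therefore agree pointwise under the coupling, hence have identical law. Equivalently, one may simply expand
\begin{equation*}
\langle \widetilde{\bx}^{(1)}, \widetilde{\bx}^{(2)} \rangle = \sum_{i=1}^{N} \sum_{j=1}^{k} \frac{x_i^{(1)}}{\sqrt{k}} \cdot \frac{x_i^{(2)}}{\sqrt{k}} = \sum_{i=1}^{N} x_i^{(1)} x_i^{(2)},
\end{equation*}
which is the same computation written out coordinatewise.

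There is no real obstacle here: the scaling $\tfrac{1}{\sqrt{k}}$ in the definition of dilution was chosen precisely so that the $k$-fold replication is exactly compensated in the inner product. The only minor thing to note is that the statement is about equality \emph{in law}, whereas the argument actually produces an almost-sure coupling, which is strictly stronger.
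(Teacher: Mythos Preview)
Your proof is correct and is exactly the computation the paper has in mind; the paper states the result as ``immediate from the definition of dilution'' without writing anything further, and your coupling plus the identity $\langle a\otimes b,\,c\otimes d\rangle=\langle a,c\rangle\langle b,d\rangle$ (or the equivalent coordinatewise expansion) is precisely that immediate verification.
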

\noindent
We will also use the following simple combinatorial fact, proved in Appendix~\ref{app:pf:prob:binom-lb}.

\begin{proposition}
    \label{prop:binom-lb}
    Suppose $t \leq k / 2$. Then, $\binom{k}{t} \geq \frac{k^t}{t!} \exp(-\frac{t^2}{k})$.
\end{proposition}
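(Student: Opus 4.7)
The plan is to reduce the binomial coefficient to a product of $(1-j/k)$ factors and control the product by taking logarithms. Write
\[
\binom{k}{t} \;=\; \frac{k(k-1)\cdots(k-t+1)}{t!} \;=\; \frac{k^t}{t!}\prod_{j=0}^{t-1}\left(1-\frac{j}{k}\right),
\]
so the claim is equivalent to showing $\sum_{j=0}^{t-1}\log(1 - j/k) \geq -t^2/k$ under the hypothesis $t \leq k/2$.

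Next I would invoke the elementary inequality $\log(1-x) \geq -x/(1-x)$, valid for all $x\in [0,1)$ (which is easily verified by checking that $f(x) = \log(1-x) + x/(1-x)$ satisfies $f(0)=0$ and has non-negative derivative on $[0,1)$). Applying this termwise with $x = j/k$ gives
\[
\sum_{j=0}^{t-1}\log\!\left(1-\frac{j}{k}\right) \;\geq\; -\sum_{j=0}^{t-1}\frac{j}{k-j}.
\]

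To finish, bound the right-hand sum by pulling out the worst denominator: since $k-j \geq k-t+1$ for $0 \leq j \leq t-1$,
\[
\sum_{j=0}^{t-1}\frac{j}{k-j} \;\leq\; \frac{1}{k-t+1}\cdot\frac{t(t-1)}{2}.
\]
A short algebraic check shows $\frac{t(t-1)}{2(k-t+1)} \leq \frac{t^2}{k}$ whenever $t \leq k/2$ (it reduces to $(t-1)(k-2t)\geq 0$), giving the required bound. Exponentiating and multiplying by $k^t/t!$ yields the proposition.

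There is no real obstacle here; the only mild subtlety is picking a logarithmic inequality sharp enough to accommodate the full range $t \leq k/2$ (a weaker bound like $\log(1-x) \geq -2x$ would lose a constant factor in the exponent, and $-x - x^2$ works but requires handling the sum $\sum j^2$). The $-x/(1-x)$ bound together with the hypothesis $t \leq k/2$ makes the calculation essentially one line.
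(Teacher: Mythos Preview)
Your proof is correct and follows essentially the same route as the paper: write $\binom{k}{t} = \frac{k^t}{t!}\prod_{j=0}^{t-1}(1-j/k)$, take logs, and bound the sum. The only difference is the choice of elementary logarithmic inequality. One minor algebraic slip: the inequality $\frac{t(t-1)}{2(k-t+1)} \leq \frac{t^2}{k}$ does not reduce exactly to $(t-1)(k-2t)\geq 0$ (the difference $2t(k-t+1)-k(t-1)$ equals $(t-1)(k-2t)+2k$, not $(t-1)(k-2t)$), but since the extra $+2k$ only helps, your conclusion stands.

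Your closing remark about $\log(1-x)\geq -2x$ is mistaken, and in fact the paper uses precisely that bound. For $0\leq x\leq 1/2$ it gives
\[
\sum_{j=0}^{t-1}\log\!\left(1-\frac{j}{k}\right) \;\geq\; -\frac{2}{k}\sum_{j=0}^{t-1} j \;=\; -\frac{t(t-1)}{k} \;\geq\; -\frac{t^2}{k},
\]
with no constant lost. So the simpler inequality suffices and makes the argument shorter than yours.
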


\begin{proof}[Proof of Theorem~\ref{thm:dilution}]
    Let us first sketch the proof ideas.
    The simple main idea is that dilution makes the original upper bound on the coordinate advantage from Theorem~\ref{thm:lvm} (not involving universality) close to tight.
    This is a general property of the truncated exponential polynomials $\exp^{\leq D}$ and does not rely on any special properties of the channel or prior.
    We then note that the proof of Theorem~\ref{thm:channel-universality} (showing loose channel universality) really proceeded by showing that \emph{this bound} behaves universally.
    Thus, if the bound is close to tight, then the coordinate advantage itself also behaves universally.

    At a technical level, Theorem~\ref{thm:channel-universality} already proves the upper bound on the coordinate advantage that we need, so it suffices to show a matching lower bound.
    Note that, under our assumption, $k \geq D^2 \geq D$.
    We have:
\begin{align*}
  \CAdv_{\leq D}(\sD_k \sX, \sP_x)
  &= \Ex_{\widetilde{\bx}^1, \widetilde{\bx}^2 \sim \sD_k \sX} \sum_{\substack{T \subseteq [kN] \\ |T| \leq D}} \prod_{i \in T} R_{\sP}(\widetilde{x}_i^1, \widetilde{x}_i^2) \\
  &= \Ex_{\bx^1, \bx^2 \sim \sX} \sum_{\substack{\bt \in \NN^N \\ |\bt| \leq D}} \prod_{i = 1}^N \binom{k}{t_i} R_{\sP}\left(\frac{x_i^1}{\sqrt{k}}, \frac{x_i^2}{\sqrt{k}}\right)^{t_i}
  \intertext{where $|t| \colonequals \sum_{i = 1}^N t_i$ and we should also constrain $|t_i| \leq k$, but since $k \geq D$ this is vacuous. Using Proposition~\ref{prop:binom-lb} and that $t_i \leq D$ while $k \geq D^2$,}
  &\geq \frac{1}{e}\Ex_{\bx^1, \bx^2 \sim \sX} \sum_{\substack{\bt \in \NN^N \\ |\bt| \leq D}} \prod_{i = 1}^N \frac{1}{t_i!} \left(k R_{\sP}\left(\frac{x_i^1}{\sqrt{k}}, \frac{x_i^2}{\sqrt{k}}\right)\right)^{t_i} \\
  &= \frac{1}{e}\Ex_{\bx^1, \bx^2 \sim \sX} \sum_{d = 0}^D \frac{1}{d!}\sum_{\substack{\bt \in \NN^N \\ |\bt| = d}} \binom{d}{t_1 \cdots t_N} \left(k R_{\sP}\left(\frac{x_i^1}{\sqrt{k}}, \frac{x_i^2}{\sqrt{k}}\right)\right)^{t_i}
  \intertext{and by the multinomial theorem,}
  &= \frac{1}{e}\Ex_{\bx^1, \bx^2 \sim \sX} \sum_{d = 0}^D \frac{1}{d!}\left(k\sum_{i = 1}^N R_{\sP}\left(\frac{x_i^1}{\sqrt{k}}, \frac{x_i^2}{\sqrt{k}}\right)\right)^d \\
  &= \frac{1}{e}\Ex_{\bx^1, \bx^2 \sim \sX} \exp^{\leq D}\left(k \sum_{i = 1}^N R_{\sP}\left(\frac{x_i^1}{\sqrt{k}}, \frac{x_i^2}{\sqrt{k}}\right)\right)
    \intertext{and the argument of Theorem~\ref{thm:channel-universality}, repeated \emph{mutatis mutandis}, noting that powers of $k$ cancel in the Taylor expansion of $R_{\sP}$, gives}
  &\geq C\Ex_{\bx^1, \bx^2 \sim \sX} \exp^{\leq D}\left(F_{\sP} \langle \bx_i^1, \bx_i^2 \rangle \right) \\
  &= C\Ex_{\widetilde{\bx}^1, \widetilde{\bx}^2 \sim \sD_k\sX} \exp^{\leq D}\left(F_{\sP} \langle \widetilde{\bx}_i^1, \widetilde{\bx}_i^2 \rangle \right) \numberthis
\end{align*}
for $C$ a constant depending only on the channel and the bounds on the prior, completing the proof.
\end{proof}

Finally, let us elaborate on the Introduction and give some heuristic discussion of why dilute models might behave like Gaussian ones.
Recall that the \Cramer-Rao inequality gives a limitation on the variance of estimators of $x$ from $y$ in CLVMs (see, e.g., Chapter 5 of \cite{Pitman-1979-BasicTheoryStatisticalInference}).

\begin{proposition}[Cram\'{e}r-Rao inequality]
    \label{prop:cramer-rao}
    In a CLVM, let $f: \Omega \to \Sigma$ be a measurable function such that $\mu_f(x) \colonequals \EE_{y \sim \sP_x} f(y)$ is $\sC^1$.
    Then, $\Var_{y \sim \sP_0} f(y) \geq \mu_f^{\prime}(0)^2 / F_{\sP}$.
\end{proposition}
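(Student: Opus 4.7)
The plan is to prove this via the classical score-function / Cauchy--Schwarz argument adapted to the notation of the paper. Define the score at $x = 0$ as
\begin{equation}
s(y) \colonequals \frac{\partial}{\partial x} L_x(y) \bigg|_{x = 0},
\end{equation}
where $L_x = d\sP_x / d\sP_0$. Because $L_0 \equiv 1$, this coincides with the more familiar log-derivative $(\partial / \partial x) \log L_x |_{x=0}$. By Proposition~\ref{prop:fisher-info-equiv}, $\EE_{y \sim \sP_0}[s(y)^2] = F_{\sP}$, so the score has finite second moment.

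Next I would establish the two basic identities about the score. First, differentiating the normalization $\EE_{y \sim \sP_0} L_x(y) = 1$ in $x$ at $x=0$ and exchanging expectation and derivative gives $\EE_{y \sim \sP_0}[s(y)] = 0$. Second, using the $\sC^1$ hypothesis on $\mu_f$ and differentiating
\begin{equation}
\mu_f(x) = \Ex_{y \sim \sP_x} f(y) = \Ex_{y \sim \sP_0} L_x(y) f(y)
\end{equation}
at $x = 0$ yields $\mu_f^{\prime}(0) = \EE_{y \sim \sP_0}[f(y) s(y)]$. Combining with $\EE[s] = 0$, we may center $f$: writing $\bar f \colonequals \EE_{y \sim \sP_0} f(y)$,
\begin{equation}
\mu_f^{\prime}(0) = \Ex_{y \sim \sP_0}\bigl[(f(y) - \bar f)\, s(y)\bigr].
\end{equation}

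The conclusion follows from Cauchy--Schwarz in $L^2(\sP_0)$ applied to the pair $(f - \bar f, s)$:
\begin{equation}
\mu_f^{\prime}(0)^2 \leq \Ex_{y \sim \sP_0}\bigl[(f(y) - \bar f)^2\bigr] \cdot \Ex_{y \sim \sP_0}[s(y)^2] = \Var_{y \sim \sP_0}(f(y)) \cdot F_{\sP},
\end{equation}
and dividing by $F_{\sP}$ (which may be assumed positive, else the bound is trivial) gives the stated inequality.

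The only technical step requiring any care is the interchange of derivative and expectation used in computing both $\EE[s] = 0$ and $\mu_f^{\prime}(0) = \EE[f \cdot s]$. The paper's $\sC^1$ hypothesis on $\mu_f$ together with the implicit smoothness of $L_x$ required for Proposition~\ref{prop:fisher-info-equiv} suffice; a standard dominated-convergence argument handles this, and since the statement is cited from \cite{Pitman-1979-BasicTheoryStatisticalInference} one could simply reference the classical Cramér--Rao bound there. This is the main (mild) obstacle; the rest is a direct application of Cauchy--Schwarz.
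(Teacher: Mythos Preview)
Your argument is the standard score/Cauchy--Schwarz proof of the Cram\'er--Rao bound and is correct. Note, however, that the paper does not actually supply its own proof of this proposition: it is stated as a classical result with a citation to \cite{Pitman-1979-BasicTheoryStatisticalInference}, so there is nothing to compare against beyond observing that your proof is precisely the textbook argument one would find in that reference.
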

\noindent
Usually one defines an entire Fisher information function $F_{\sP}(x)$ analogously to our definition, from which one obtains similar lower bounds on the variance of any estimator under any $\sP_x$.
We also mention the following ancillary result, to clarify the discussion in Remark~\ref{rem:exp-hardest}.
\begin{corollary}
    \label{cor:fisher-info-lb}
    Suppose that the variance of $\sP_0$ is $\sigma^2$, and the $\sP_x$ are parametrized such that the mean of $\sP_x$ is $x$.
    Then, $F_{\sP} \geq 1 / \sigma^2$.
\end{corollary}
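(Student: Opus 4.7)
The plan is to apply the Cram\'{e}r-Rao inequality of Proposition~\ref{prop:cramer-rao} to the identity function $f(y) = y$, exploiting the assumed ``mean parametrization'' of the channel in order to make the derivative $\mu_f^{\prime}(0)$ trivial to compute.

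First, I would take $f \colon \Omega \to \Sigma$ to be $f(y) = y$ (this requires $\Omega \subseteq \Sigma$, which is the natural setting for a scalar location parameter; one can verify this is consistent with the hypothesis that the mean of $\sP_x$ equals $x$). Then by the standing hypothesis,
\begin{equation}
    \mu_f(x) = \Ex_{y \sim \sP_x}[y] = x,
\end{equation}
so $\mu_f$ is visibly $\sC^1$ (indeed affine) and $\mu_f^{\prime}(0) = 1$, which dispenses with the regularity prerequisite of Proposition~\ref{prop:cramer-rao}. Moreover $\Var_{y \sim \sP_0} f(y) = \sigma^2$ by definition of $\sigma^2$.

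Plugging these into the Cram\'{e}r-Rao bound gives
\begin{equation}
    \sigma^2 \;\geq\; \frac{\mu_f^{\prime}(0)^2}{F_{\sP}} \;=\; \frac{1}{F_{\sP}},
\end{equation}
which on rearrangement is exactly the claim $F_{\sP} \geq 1/\sigma^2$.

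There is no real obstacle here: the content of the corollary is entirely packaged inside Proposition~\ref{prop:cramer-rao}, and the assumed mean parametrization is precisely what makes the identity function the ``right'' test statistic to plug in. The only point that deserves a brief sentence in the written proof is why Cram\'{e}r-Rao applies to $f(y) = y$ at all; since we are reading off values of $\mu_f$ and $\mu_f^{\prime}$ that are forced by hypothesis, this amounts to one line. It is also worth remarking, in line with Remark~\ref{rem:exp-hardest}, that equality in Cram\'{e}r-Rao holds precisely when $f$ is affinely related to the score at $x = 0$, which for the identity statistic is the defining property of a natural exponential family parametrized by its mean---matching Proposition~\ref{prop:channel-exponential}.
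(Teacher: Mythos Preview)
Your proposal is correct and follows exactly the paper's approach: apply Proposition~\ref{prop:cramer-rao} with $f(y) = y$, so that $\mu_f(x) = x$ by the mean-parametrization hypothesis, $\mu_f'(0) = 1$, and $\Var_{y \sim \sP_0} f(y) = \sigma^2$, yielding $\sigma^2 \geq 1/F_{\sP}$. The additional remarks you include about the equality case are consistent with the paper's discussion but go beyond what the paper's one-line proof states.
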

\begin{proof}
    In Proposition~\ref{prop:cramer-rao}, take $f(y) = y$, which has $\mu_f(x) = x$ by the assumption.
\end{proof}

We may produce a function that, at least locally near $x = 0$, is an unbiased estimator and saturates the inequality.
In a CLVM, consider the function
\begin{equation}
    f(y) = \frac{1}{F_{\sP}} \frac{\partial}{\partial x}L_x(y) \bigg|_{x = 0},
\end{equation}
sometimes called the \emph{Fisher score}.
Like with the Fisher information, though, the Fisher score can be evaluated at arbitrary $x$ while we are interested only in evaluating it at $x = 0$, so we call $f(y)$ the \emph{local Fisher score} instead.
For example, the nonlinearity used in the pretransformed eigenvalue test of Proposition~\ref{prop:pwbm} is the special case of the local Fisher score for additive noise models.
The following is one conceptual justification for using the local Fisher score.

\begin{remark}[Approximate maximum likelihood]
    \label{rem:fisher-score-mle}
    One interpretation of the local Fisher score is as an approximation of the maximum likelihood estimator of $x$.
    Indeed, consider the log-likelihood ratio $\ell_x(y) \colonequals \log L_x(y)$.
    One may check, under mild regularity conditions, that an alternative formula for the Fisher information is $F_{\sP} = -\frac{\partial^2}{\partial x^2} \ell_x(y) |_{x = 0}$, and so $f(y) = -\frac{\partial}{\partial x} \ell_x(y)|_{x = 0} \, / \frac{\partial^2}{\partial x^2} \ell_x(y) |_{x = 0}$.
    This means that $f(y)$ gives the result of taking one step of Newton's method for maximizing the log-likelihood in $x$, starting from $x = 0$.
\end{remark}

The local Fisher score is ``locally unbiased'' near $x = 0$, in the sense that
\begin{align}
  \Ex_{y \sim \sP_0} f(y) &= \frac{1}{F_{\sP}} \frac{\partial R_{\sP}}{\partial x^{(1)}}(0, 0) = 0, \\
  \frac{\partial}{\partial x} \Ex_{y \sim \sP_x} f(y) \bigg|_{x = 0} &= \frac{1}{F_{\sP}} \frac{\partial^2 R_{\sP}}{\partial x^{(1)}\partial x^{(2)}}(0, 0) = \frac{1}{F_{\sP}} \cdot F_{\sP} = 1,
\end{align}
so $\Ex_{y \sim \sP_x} f(y) \approx x$ to leading order for small $x$.
And, we have
\begin{equation}
    \Var_{y \sim \sP_0} f(y)^2 = \Ex_{y \sim \sP_0} f(y)^2 = \frac{1}{F_{\sP}^2} \Ex_{y \sim \sP_0}\left(\frac{\partial}{\partial x}L_x(y) \bigg|_{x = 0}\right)^2 = \frac{1}{F_{\sP}^2} \cdot F_{\sP} = \frac{1}{F_{\sP}},
\end{equation}
so $f(y)$ saturates the \Cramer-Rao inequality and may be viewed as a ``minimum variance locally unbiased estimator'' (paralleling the more common notion of minimum variance unbiased estimator, which asks for exact unbiasedness over all $x$).

For small $x$, by continuity, we expect the variance of the local Fisher score over $\sP_x$ to still be $\Var_{y \sim \sP_x} f(y)^2 = 1 / F_{\sP} + O(x)$.
Turning to the application to diluted models, suppose we have access to $y_1, \dots, y_k \sim \sP_{x / \sqrt{k}}$ that are i.i.d.
By the above observations, the random variable
\begin{equation}
    \frac{1}{\sqrt{k}} \sum_{i = 1}^k f(y_i) = x + \frac{1}{\sqrt{k}} \sum_{i = 1}^k \left(f(y_i) - \frac{x}{\sqrt{k}}\right)
\end{equation}
will, for large $k$, have law close to $\sN(x, 1 / F_{\sP})$ by the central limit theorem.
(As in Remark~\ref{rem:fisher-score-mle}, this estimator is approximating the maximum likelihood estimate of $x$ from the samples $y_i$, and this limit is comparable to the asymptotic normality of the maximum likelihood estimator.)
In this way, it is always possible to reduce dilute observations through an arbitrary channel to less dilute observations through an (approximately) additive Gaussian channel.
By the \Cramer-Rao inequality, the variance $1 / F_{\sP}$ is optimal, in the sense that there is no transformation $f$ that through the same operations yields approximately Gaussian observations with mean $x$ and with smaller variance (as also verified by hand in a special case using the calculus of variations by \cite{PWBM-2018-PCAI}).

\section{Applications}

\subsection{Spiked matrix models: Proof of Corollary~\ref{cor:non-gaussian-smm}}
\label{sec:spiked-mx}

We will use the following result coming from the analysis of a Gaussian spiked matrix model.
\begin{proposition}[Theorem 3.9 of \cite{KWB-2022-LowDegreeNotes}]
    \label{prop:kwb-spiked-wigner}
    Under the assumptions of Corollary~\ref{cor:non-gaussian-smm}, for any $D = D(n) = o(n / \log n)$,
    \begin{equation}
        \Ex_{\bx^{(1)}, \bx^{(2)} \sim \pi^{\otimes n}} \exp^{\leq D}\left(\frac{\lambda^2}{2n}\langle \bx^{(1)}\bx^{(1)^{\top}}, \bx^{(2)}\bx^{(2)^{\top}}\rangle\right) = O(1).
    \end{equation}
\end{proposition}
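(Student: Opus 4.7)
The plan is to expand the truncated exponential as a finite Taylor polynomial and reduce the bound to a Gaussian-type moment estimate for the scalar overlap. Write $S \colonequals \langle \bx^{(1)}, \bx^{(2)}\rangle = \sum_{i=1}^n \xi_i$ with $\xi_i \colonequals x^{(1)}_i x^{(2)}_i$; these are i.i.d., bounded (since $\pi$ is bounded, say $|\xi_i| \leq K^2$), mean zero, and variance one. Since $\langle \bx^{(1)}\bx^{(1)\top}, \bx^{(2)}\bx^{(2)\top}\rangle = S^2$, the quantity to bound becomes
$$\sum_{d=0}^D \frac{1}{d!}\left(\frac{\lambda^2}{2n}\right)^d \Ex S^{2d}.$$
The relevant regime is $\lambda < 1$, which corresponds to $\lambda < 1/\sqrt{F_{\sP}}$ for the Gaussian channel $\sN(x,1)$ (where $F_{\sP} = 1$).

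The heart of the argument will be the moment bound
$$\Ex S^{2d} \leq C \cdot (2d-1)!! \cdot n^d \qquad\text{uniformly for } d \leq D = o(n/\log n),$$
for a constant $C = C(\pi)$. To prove this I would use the standard combinatorial expansion, keeping in mind that $\Ex[\xi_{i_1}\cdots \xi_{i_{2d}}]$ vanishes unless every index in the multiset appears at least twice, so
$$\Ex S^{2d} = \sum_{\sigma} (n)_{|\sigma|} \prod_{B \in \sigma} \Ex \xi_1^{|B|},$$
with $\sigma$ ranging over set partitions of $[2d]$ into blocks of size at least two. Perfect matchings ($|\sigma| = d$) contribute exactly $(2d-1)!! \, (n)_d \leq (2d-1)!! \, n^d$. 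Partitions with some block of size at least three have $|\sigma| = d - s$ for some $s \geq 1$, losing a power $n^{-s}$ but, by Stirling-type estimates on partition counts and the bound $|\Ex \xi_1^m| \leq K^{2(m-2)}$ on non-pair blocks, gaining a factor I would show is at most $(C'd/n)^s$ times the leading term. Summing over $s$ gives a geometric series that is bounded provided $d$ satisfies $d = o(n/\log n)$.

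Inserting this moment bound into the Taylor expansion and using $(2d-1)!! = (2d)!/(2^d d!)$,
$$\sum_{d=0}^D \frac{1}{d!}\left(\frac{\lambda^2}{2n}\right)^d \Ex S^{2d} \leq C \sum_{d=0}^{D} \frac{(2d-1)!!}{2^d d!}\lambda^{2d} = C \sum_{d=0}^{D} \binom{2d}{d}\frac{\lambda^{2d}}{4^d}.$$
The right-hand side is the degree-$D$ truncation of the Taylor series of $(1-\lambda^2)^{-1/2}$. Since $\binom{2d}{d}/4^d \sim 1/\sqrt{\pi d}$, this series converges to $1/\sqrt{1-\lambda^2}$ as $D \to \infty$ whenever $\lambda < 1$, so its partial sums are uniformly bounded in $D$. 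This yields the desired $O(1)$ conclusion.

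The main obstacle will be making the non-matching partition contributions precise uniformly in $d$. The number of partitions of $[2d]$ into $k$ blocks (Stirling numbers of the second kind) grows roughly like $k^{2d}/k!$, and this combinatorial weight must be balanced against the saved power $n^{k-d}$ and the accumulated moment factor $K^{O(d-k)}$. Careful bookkeeping in this tradeoff is exactly what fixes the growth condition $D = o(n/\log n)$: a larger $D$ would allow non-pair partitions to contribute comparably to perfect matchings, destroying the Gaussian moment dominance on which the whole argument rests.
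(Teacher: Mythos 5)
The paper does not actually prove this proposition---it is imported verbatim as Theorem 3.9 of \cite{KWB-2022-LowDegreeNotes}---so the comparison is between your argument and the proof in that reference. Your skeleton is right: reducing to the scalar overlap $S$, expanding $\exp^{\leq D}$, and landing on $\sum_d \binom{2d}{d}(\lambda^2/4)^d$, the truncation of $(1-\lambda^2)^{-1/2}$. But the lemma you place at the heart of the argument, $\Ex S^{2d} \leq C\,(2d-1)!!\,n^d$ uniformly over $d \leq D = o(n/\log n)$, is false for general bounded, mean-zero, unit-variance $\pi$, and the route you sketch to it is quantitatively wrong. Already at level $s=1$: the partitions of $[2d]$ consisting of one block of size $4$ and $d-2$ pairs number $\binom{2d}{4}(2d-5)!! = \frac{d(d-1)}{6}(2d-1)!!$, so their total contribution is $\frac{d(d-1)}{6}\,\mu_4\,(2d-1)!!\,(n)_{d-1}$ with $\mu_4 = \Ex \xi_1^4$, which relative to $(2d-1)!!\,n^d$ is of order $d^2\mu_4/n$, not $C'd/n$ as you claim. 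More generally, the level-$s$ contribution behaves like $\frac{1}{s!}(c\,\mu_4\,d^2/n)^s$ times the leading term, so the sum over $s$ is $\exp(\Theta(\mu_4 d^2/n))$, not a bounded geometric series. This genuinely exceeds every constant once $d \gg \sqrt{n}$: for the sparse Rademacher prior with sparsity $0.1$ (allowed by Assumption~\ref{ass:spike-prior}) one has $\mu_4 = (\Ex_{x\sim\pi} x^4)^2 = 100$, every even-block partition contributes nonnegatively since $\xi_1$ is symmetric, and counting only the partitions made of $4$-blocks and pairs gives $\Ex S^{2d} \geq \frac{1}{2}\exp(3d^2/n)\,(2d-1)!!\,n^d$ for, say, $d = n^{0.9} = o(n/\log n)$.

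The conclusion of the proposition nevertheless survives, because the correct correction factor $\exp(O(K^4 d^2/n))$ is $e^{o(d)}$ whenever $d = o(n)$ and is therefore absorbed by the geometric decay $\lambda^{2d}$ in the final sum; this absorption, and not ``Gaussian moment dominance,'' is the real mechanism behind the degree restriction. To repair your argument, prove the weaker bound $\Ex S^{2d} \leq (2d-1)!!\,n^d \exp(C K^4 d^2/n)$ for $d \leq cn$ (the block-profile count above, organized by the number of excess elements, gives exactly this), carry the factor $\exp(CK^4 d^2/n)$ into the $d$-th term, and note that for $n$ large and $d \leq D = o(n/\log n) \subseteq o(n)$ each term is at most $\lambda^{d}$, hence summable. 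As written, both the stated moment bound and your accounting of the non-matching partitions are incorrect, so the proof has a genuine gap even though the strategy is salvageable.
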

\noindent
This appears as a bound on the polynomial advantage through an additive Gaussian channel where the variance of the diagonal noise is exactly twice the variance of the off-diagonal noise, so that the symmetric noise matrix is drawn from the Gaussian orthogonal ensemble.

\begin{proof}[Proof of Corollary~\ref{cor:non-gaussian-smm}]
    The upper bound on the coordinate advantage follows from our Theorem~\ref{thm:channel-universality} and Proposition~\ref{prop:kwb-spiked-wigner}, as follows.
    Note that under our assumptions, $\sX_n$ satisfies Assumptions P1 and P2 of Theorem~\ref{thm:channel-universality} and $\sP$ (the additive noise channel for a noise density $p$) satisfies Assumptions C1 and C2 by Proposition~\ref{prop:channel-additive}.
    Let $K$ be a number such that $x \sim \pi$ has $|x| \leq K$ almost surely.
    Let us write $\bX^{(1)}, \bX^{(2)} \sim \sX_n$ for two independent draws from the prior described in the statement, so that $\bX^{(i)}$ contains the upper triangle of $\frac{\lambda}{\sqrt{n}}\bx^{(i)}\bx^{(i)^{\top}}$ for $\bx^{(i)}$ having i.i.d.\ entries drawn from $\pi$.
    We assume without loss of generality that $D$ is even.
    Then, Theorem~\ref{thm:channel-universality} gives, for a constant $C$ depending only on the channel and the bounds on the prior,
    \begin{align*}
      \CAdv_{\leq D}(\sX_n, \sP)
      &\leq C \Ex_{\bX^{(1)}, \bX^{(2)} \sim \sX_n} \exp^{\leq D}\left(\frac{1}{F_{\sP}} \langle \bX^{(1)}, \bX^{(2)}\rangle\right) \\
      &= C \Ex_{\bx^{(1)}, \bx^{(2)} \sim \pi^{\otimes n}} \exp^{\leq D}\left(\frac{\lambda^2}{2nF_{\sP}} \left(\langle \bx^{(1)} \bx^{(1)^{\top}}, \bx^{(2)} \bx^{(2)^{\top}}\rangle - \sum_{i = 1}^n|x_i^{(1)}|^2|x_i^{(2)}|^2\right) \right) \\
      &\leq C \Ex_{\bx^{(1)}, \bx^{(2)} \sim \pi^{\otimes n}} \exp^{\leq D}\left(\frac{\lambda^2}{2nF_{\sP}} \langle \bx^{(1)} \bx^{(1)^{\top}}, \bx^{(2)} \bx^{(2)^{\top}}\rangle + \frac{\lambda^2}{2F_{\sP}}K^4 \right) \numberthis
    \end{align*}
    and using Proposition~\ref{prop:trunc-exp} together with Proposition~\ref{prop:kwb-spiked-wigner} shows that this is $O(1)$ so long as $\lambda < 1 / \sqrt{F_{\sP}}$, as claimed.

    For the lower bound on the coordinate advantage when $\lambda > 1 / \sqrt{F_{\sP}}$, we note that we cannot use the lower bound from Theorem~\ref{thm:channel-universality}, which would give a threshold for $\lambda$ off by a constant factor.
    We could appeal to Theorem~\ref{thm:dilution}, but this would require dilution of the prior.
    Instead, we make a more hands-on argument, producing an explicit function of low coordinate degree that witnesses that the coordinate advantage is large.
    Namely, let $f(y) \colonequals -p^{\prime}(y) / p(y)$ be the local Fisher score as in Proposition~\ref{prop:pwbm}, and define for a symmetric matrix $\bY$
    \begin{equation}
        g(\bY) \colonequals \Tr\left(\left(\frac{1}{\sqrt{n}}f(\bY)\right)^D\right),
    \end{equation}
    where $f(\bY)$ is applied entrywise.
    By expanding the trace, we see that this has $\cdeg(g) \leq D$.
    We have assumed $D = \omega(\log n)$, but let us also assume that $D < n$; this is without loss of generality, since the coordinate advantage is increasing in $D$.

    We now control the first moment of this under $\PP$ and the second moment under $\QQ$.
    We have, by Theorem 4.8 of \cite{PWBM-2018-PCAI},
    \begin{align*}
      \Ex_{\bY \sim \PP} g(\bY)
      &\geq \Ex_{\bY \sim \PP} \lambda_{\max}\left(\frac{1}{\sqrt{n}}f(\bY)\right)^D \\
      &\geq (1 - o(1)) \left(\lambda F_{\sP} + \frac{1}{\lambda}\right)^D \\
      &\geq (1 - o(1)) \left(2\sqrt{F_{\sP}} + \epsilon\right)^D, \numberthis
    \end{align*}
    for some $\epsilon > 0$ depending on $\lambda$.
    And, we have
    \begin{align*}
      \Ex_{\bY \sim \QQ} g(\bY)^2
      &= \Ex_{\bY \sim \QQ} \left(\Tr\left(\left(\frac{1}{\sqrt{n}}f(\bY)\right)^D\right)\right)^2 \\
      &\leq n \Ex_{\bY \sim \QQ} \Tr\left(\left(\frac{1}{\sqrt{n}}f(\bY)\right)^{2D}\right)
        \intertext{and here, noting that $f(\bY)$ is a Wigner matrix with bounded i.i.d.\ entries that are centered and have variance $F_{\sP}$, standard combinatorial analysis of Wigner matrices following \cite{FK-1981-EigenvaluesRandomMatrices} (see, e.g., Section~2.1.6 of \cite{AGZ-2010-RandomMatrices}) gives, for $D < n$,}
      &\leq 2n^2 (2\sqrt{F_{\sP}})^{2D}. \numberthis
    \end{align*}

    Thus, the coordinate advantage is by definition bounded below as
    \begin{equation}
        \CAdv_{\leq D}(\sX_n, \sP) \geq \frac{\EE_{\bY \sim \PP} \, g(\bY)}{\sqrt{\EE_{\bY \sim \QQ} \, g(\bY)^2}} \geq \frac{(1 - o(1))}{2n^2}\left(1 + \frac{\epsilon}{2\sqrt{F_{\sP}}}\right)^{D},
    \end{equation}
    and the result follows since $D = \omega(\log n)$, so this diverges as $D \to \infty$.
\end{proof}

\begin{remark}
    By similar arguments one may allow for the diagonal of $\bx^{(i)}\bx^{(i)^{\top}}$ to be included in $\bX^{(i)}$, or, using the more general bound outlined in Remark~\ref{rem:different-channels}, for the diagonal to be included with additive noise having twice the variance, so that the noise matrix is a Wigner matrix with the usual scaling, or for that matter for the diagonal to be included with any other bounded noise distribution.
\end{remark}

\subsection{Spiked tensor models: Proof of Corollary~\ref{cor:non-gaussian-stm}}

We will need the following slight elaboration on the second claim of Proposition~\ref{prop:kwb-stm}, which is obtained from examining the proofs of \cite{KWB-2022-LowDegreeNotes,Kunisky-2021-SpectralBarriersCertification}.
We give more details in Appendix~\ref{app:pf:prop:kwb-stm-2}.
\begin{proposition}
    \label{prop:kwb-stm-2}
    Let $\sP_x = \sN(x, 1)$, let $\sX$ be as in Proposition~\ref{prop:kwb-stm}, and let $K > 0$.
    Then, there exists $b_q > 0$ depending on $K$ such that, if $\lambda \geq b_q D^{-(q - 2)/4}$ and $D$ is even with $2 \leq D \leq \frac{2}{q}n$, then
    \begin{equation}
        \frac{\Adv_{\leq D}(\sX_n, \sP)}{\Adv_{\leq D - 2}(\sX_n, \sP)} \geq K.
    \end{equation}
\end{proposition}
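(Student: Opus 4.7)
The plan is to strengthen the lower bound argument of Proposition~\ref{prop:kwb-stm}(2) so that it yields a ratio estimate. By Proposition~\ref{prop:gaussian-additive} we may expand
\[
\Adv_{\leq D}(\sX_n, \sP)^2 = \sum_{d = 0}^D T_d, \qquad T_d \colonequals \frac{1}{d!}\, \Ex_{\bX^{(1)}, \bX^{(2)} \sim \sX_n}\langle \bX^{(1)}, \bX^{(2)} \rangle^d,
\]
where each $T_d$ is nonnegative, since multinomial expansion gives $T_d = \sum_{|\alpha| = d} \tfrac{1}{\alpha!}(\Ex\,\bX^\alpha)^2$. Since $D$ is even and $T_{D-1} \geq 0$,
\[
\Adv_{\leq D}^2 - \Adv_{\leq D-2}^2 = T_D + T_{D-1} \geq T_D,
\]
so it suffices to show that $T_D \geq (K^2 - 1)\, \Adv_{\leq D-2}^2$ whenever $\lambda \geq b_q D^{-(q-2)/4}$ with $b_q$ chosen large in terms of $K$.

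Next, writing $\bX^{(i)}$ in terms of underlying i.i.d.\ spike vectors $\bu^{(i)} \in \RR^n$ gives $T_d = (\lambda^{2d}/(d!\, n^{dq/2}))\, \Ex[S^d]$ for $S \colonequals \sum_{i_1 < \cdots < i_q} \prod_j u_{i_j}^{(1)} u_{i_j}^{(2)}$. The proof of Proposition~\ref{prop:kwb-stm}(2) in \cite{KWB-2022-LowDegreeNotes, Kunisky-2021-SpectralBarriersCertification} evaluates $\Ex[S^d]$ combinatorially and shows that, at the critical scaling $\lambda = a_q D^{-(q-2)/4}$ for some fixed $a_q > 0$, the top terms $T_d$ with $d$ close to $D$ remain of order unity, giving $\Adv_{\leq D}^2 = \omega(1)$. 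Because each factor of $\lambda^2$ in $T_d$ enters multiplicatively, at the new scaling $\lambda = b_q D^{-(q-2)/4}$ the dominant hypergraph contributions satisfy
\[
T_d(\lambda) \;\geq\; (b_q / a_q)^{2d}\, T_d^{\mathrm{crit}}
\]
(up to lower-order corrections), where $T_d^{\mathrm{crit}}$ denotes the corresponding value at $\lambda = a_q D^{-(q-2)/4}$. In particular, $T_D / T_{D-2} \geq c_q\, (b_q / a_q)^4$ for a constant $c_q > 0$ depending only on $q$, with an analogous lower bound on $T_d / T_{d-2}$ holding uniformly for $d$ near $D$ (regardless of parity).

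Finally, a geometric-series argument controls the tail. Writing $r \colonequals c_q (b_q / a_q)^4$ and applying the ratio bound along both the even-index and odd-index subsequences yields
\[
\sum_{d = 0}^{D - 2} T_d \;\leq\; \frac{r}{r - 1}\bigl(T_{D - 2} + T_{D - 3}\bigr) \;=\; O(T_{D - 2})
\]
once $b_q$ is large, where the comparison $T_{D-3} = O(T_{D-2})$ is again immediate from the same hypergraph moment estimates. Combining with $T_D \geq r\, T_{D-2}$ gives $\Adv_{\leq D}^2 / \Adv_{\leq D-2}^2 \geq \Omega(r) = \Omega((b_q / a_q)^4)$, which exceeds $K^2$ once $b_q$ is chosen large enough in terms of $K$ and $q$. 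The main obstacle is verifying the quantitative multiplicative scaling $T_d(\lambda) \geq (b_q / a_q)^{2d}\, T_d^{\mathrm{crit}}$ from the combinatorial calculations in the original proofs; this requires reopening those calculations to confirm that the hypergraph bound underlying the $D^{-(q-2)/4}$ threshold really does decouple $b_q$ from $D$ in the clean way claimed, but the underlying computations are identical to those already appearing in \cite{KWB-2022-LowDegreeNotes, Kunisky-2021-SpectralBarriersCertification}.
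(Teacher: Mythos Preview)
Your approach and the paper's both ultimately rest on the explicit term bounds from \cite{KWB-2022-LowDegreeNotes}, but you take an unnecessarily indirect route and leave a real gap that you yourself flag at the end.

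First, a minor point: the scaling $T_d(\lambda) = (b_q/a_q)^{2d}\,T_d^{\mathrm{crit}}$ is \emph{exact}, since $T_d = \tfrac{\lambda^{2d}}{d!\,n^{dq/2}}\Ex[S^d]$ with $S$ independent of $\lambda$; there are no ``lower-order corrections,'' and so the detour through a critical reference scale $a_q$ adds nothing. More seriously, your central claim $T_D/T_{D-2} \geq c_q(b_q/a_q)^4$ with $c_q$ depending only on $q$ amounts to asserting that $\Ex[S^D]/\Ex[S^{D-2}]$ is bounded above and below by matching constants times $n^{q/2}D^{q/2}$, uniformly over $2 \leq D \leq \tfrac{2}{q}n$. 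The KWB bounds as stated do not give this: the upper-bound base involves $q^{q/2+1}(2D)^{(q-2)/2}$ while the lower-bound base involves $e^{-q}q^{q/2}D^{(q-2)/2}$, so the implied two-sided estimate on a single $T_d$ is loose by an exponential-in-$d$ factor. The same issue afflicts the uniform-in-$d$ version you need for the geometric-series step and the comparison $T_{D-3} = O(T_{D-2})$. Reopening the hypergraph combinatorics to extract a tight two-sided bound on each $\Ex[S^d]$ is substantially more than ``the underlying computations are identical.''

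The paper sidesteps all of this. Rather than compare consecutive $T_d$, it directly takes the \emph{summed} upper bound $\Adv_{\leq D-2} \leq 2\bigl(\lambda^2 q^{q/2+1}(2D)^{(q-2)/2}\bigr)^{D-2}$ from the hardness argument in Section~3.1.1 of \cite{KWB-2022-LowDegreeNotes} and the lower bound $\Adv_{\leq D} \geq \bigl(\lambda^2 e^{-q}q^{q/2}D^{(q-2)/2}\bigr)^{D}$ from the easiness argument in Section~3.1.2, and simply divides. No uniform term-wise ratio is needed; only the two inequalities already established in the cited references.
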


\begin{proof}[Proof of Corollary~\ref{cor:non-gaussian-stm}]
    The first claim follows immediately by using the upper bound of Theorem~\ref{thm:channel-universality} on the coordinate advantage and applying the first claim of Proposition~\ref{prop:kwb-stm}:
    \begin{equation}
        \CAdv_{\leq D}(\sX, \sP)^2 \leq C_1\,\Adv_{\leq D}(\sX, \sN(0, 1 / F_{\sP}))^2,
    \end{equation}
    where if $a_q$ is the constant from the Proposition, then taking $a_{q, \sP} \colonequals a_q / \sqrt{F_{\sP}}$ absorbs the extra factor of $F_{\sP}$ from the right-hand side.

    For the second claim, we use the lower bound of Theorem~\ref{thm:channel-universality} and Proposition~\ref{prop:kwb-stm-2}: taking $b_{q, \sP}$ sufficiently large, by the Proposition we will have for all even $D \leq \frac{2}{q}n$ that
    \begin{align*}
      \CAdv_{\leq D}(\sX, \sP)^2
      &\geq C_2\,\Adv_{\leq D}(\sX, \sN(0, 1 / F_{\sP}))^2 - C_3\,\Adv_{\leq D - 2}(\sX, \sN(0, 1 / F_{\sP}))^2 \\
      &\geq \frac{C_2}{2} \Adv_{\leq D}(\sX, \sN(0, 1 / F_{\sP}))^2 \\
      &\geq \frac{C_2}{2}K^D. \numberthis
    \end{align*}
    We may then remove the constraint that $D$ is even by observing that the coordinate advantage is monotonically increasing in $D$.
\end{proof}

\subsection{Censorship: Proof of Theorem~\ref{thm:censorship}}
\label{sec:censorship}

The proof will follow by a simple calculation.

\begin{proof}[Proof of Theorem~\ref{thm:censorship}]
First, the channel likelihood ratio after censorship is given by:
\begin{equation}
    L_x^{\sC_{\eta}}(y) \colonequals \frac{d\sC_{\eta} \sP_x}{d\sC_{\eta} \sP_0}(y) = \left\{\begin{array}{ll} 1 & \text{if } y = \bullet, \\ L_x(y) & \text{otherwise.} \end{array}\right.
\end{equation}
We note that this does not depend on $\eta$.
And, the channel overlap is:
\begin{equation}
    R_{\sC_{\eta}\sP}(x^{(1)}, x^{(2)}) = \Ex_{y \sim \sC_{\eta}\sP_0} (L_{x^{(1)}}^{\sC_{\eta}}(y) - 1)(L_{x^{(2)}}^{\sC_{\eta}}(y) - 1) = (1 - \eta)R_{\sP}(x^{(1)}, x^{(2)}).
\end{equation}
Clearly this scaling does not affect Assumptions C1 and C2, and the Theorem is proved.
\end{proof}

We described in the Introduction the application of this result to spiked matrix models.
This gives a computational lower bound, but it is not obvious what a concrete matching polynomial time algorithm would be.

There is, however, a natural guess, extending the strategy of \cite{PWBM-2018-PCAI} to non-additive noise and as predicted by \cite{LKZ-2015-LowRankChannelUniversality}.
Namely, per our discussion in Section~\ref{sec:pf:thm:dilution}, intuitively we should expect the entrywise local Fisher score function $f(y) = \frac{\partial}{\partial x} L_x(y) |_{x = 0}$ to roughly map general noise models to additive Gaussian models.
Computing this for the censored likelihood ratio appearing above, we find that $f(\bullet) = 0$ while $f(y)$ for $y \neq \bullet$ is the same as the local Fisher score in the uncensored model.
We thus arrive at a natural strategy for working with censored data in CLVMs: set the censored entries to zero and proceed as though given an observation from an uncensored model.
For spiked matrix models, we reach the following conjecture in random matrix theory, which is simple to verify numerically but seems non-trivial to attack using standard techniques as in, e.g., \cite{FP-2007-LargestEigenvalueWigner,CDMF-2009-DeformedWigner}, since the additive structure of the model has been corrupted.

\begin{conjecture}
    \label{conj:censored-rmt}
    Let $\bx$ be random as in Assumption~\ref{ass:spike-prior}, $p(y)$ a density as in Proposition~\ref{prop:channel-additive}, $\sP$ the corresponding additive noise channel, $\eta \in (0, 1)$, and define $f(y) \colonequals -p^{\prime}(y) / p(y)$.
    Write $f(\bY)$ for the entrywise application of $f$ to a matrix $\bY$.
    Let $\bY^{(0)} \colonequals \frac{\lambda}{\sqrt{n}}\bx\bx^{\top} + \bW$ for $W_{ij} = W_{ji} \sim \sP_0$ i.i.d.\ (i.e., drawn with density $p$), and $W_{ii} = 0$.
    Let $\bY$ be formed by replacing every entry in the upper triangle of $\bY^{(0)}$ with zero independently with probability $\eta$, and repeating the same replacements symmetrically in the lower triangle.
    We conjecture that there exists some $\gamma \in \RR$ depending only on $p$ and $\eta$ such that:
    \begin{enumerate}
    \item If $\lambda < 1 / \sqrt{(1 - \eta)F_{\sP}}$, then $\frac{1}{\sqrt{n}} \lambda_{\max}(f(\bY)) \to \gamma$ in probability.
    \item If $\lambda > 1 / \sqrt{(1 - \eta)F_{\sP}}$, then there is $\epsilon = \epsilon(\lambda) > 0$ such that $\frac{1}{\sqrt{n}} \lambda_{\max}(f(\bY)) \to \gamma + \epsilon$ in probability.
    \end{enumerate}
\end{conjecture}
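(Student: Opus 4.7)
The plan is to reduce the conjecture to a Féral--Péché-type phase transition for a deformed Wigner matrix by pointwise Taylor-expanding $f$ around the noise entries. Writing $Y_{ij} = M_{ij}\bigl(W_{ij} + \tfrac{\lambda}{\sqrt n} x_i x_j\bigr)$ with $M_{ij}\in\{0,1\}$ the independent censoring mask of mean $1-\eta$, and using $f(0)=0$ from the symmetry of $p$, one expands
\begin{equation}
    f(Y_{ij}) \;=\; M_{ij}\, f(W_{ij}) \;+\; \tfrac{\lambda x_i x_j}{\sqrt n}\, M_{ij}\, f'(W_{ij}) \;+\; M_{ij} R_{ij},
\end{equation}
where $R_{ij}$ is a Taylor remainder of typical size $O(1/n)$, controllable under polynomial-growth hypotheses on $f''$ of the type imposed in Proposition~\ref{prop:pwbm}. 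Standard integration by parts on the density $p$ gives $\EE f(W)=0$, $\EE f(W)^2 = F_\sP$, and $\EE f'(W) = F_\sP$, so after centering one obtains a decomposition
\begin{equation}
    \tfrac{1}{\sqrt n}\, f(\bY) \;=\; \tfrac{1}{\sqrt n}\bN \;+\; \tfrac{(1-\eta)\lambda F_\sP}{n}\, \bx\bx^{\top} \;+\; \bE,
\end{equation}
where conditional on $\bx$ the entries of $\bN$ are independent, mean zero, of variance $(1-\eta)F_\sP + O(1/\sqrt n)$, and $\bE$ collects a centered cross term with entries of variance $O(\lambda^2/n^2)$ together with the rescaled Taylor remainder.

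Two classical random-matrix inputs now give the transition. First, by Wigner universality with independent entries of a common second moment and sufficiently many finite higher moments (ultimately requiring subgaussian or polynomial-moment control on $p$), the empirical spectrum of $\bN/\sqrt n$ converges to the semicircle law of radius $2\sqrt{(1-\eta)F_\sP}$ and its top eigenvalue converges in probability to $\gamma \colonequals 2\sqrt{(1-\eta)F_\sP}$, which depends only on $p$ and $\eta$ as the conjecture requires. Second, the rank-one part $\tfrac{(1-\eta)\lambda F_\sP}{n}\bx\bx^{\top}$ has operator norm $(1-\eta)\lambda F_\sP + o(1)$ since $\|\bx\|_2^2/n \to 1$ by Assumption~\ref{ass:spike-prior} and the law of large numbers. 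Invoking the deformed-Wigner BBP transition \cite{FP-2007-LargestEigenvalueWigner,CDMF-2009-DeformedWigner} for this signal-plus-noise sum yields precisely the threshold $(1-\eta)\lambda F_\sP = \sqrt{(1-\eta)F_\sP}$, i.e.\ $\lambda = 1/\sqrt{(1-\eta)F_\sP}$, with below-threshold limit $\gamma$ and above-threshold limit $(1-\eta)\lambda F_\sP + 1/\lambda > \gamma$, producing an explicit $\epsilon(\lambda) > 0$.

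The main obstacle is that the entries of $\bN$ are \emph{independent but not identically distributed}---their laws depend on $x_i x_j$ through the Taylor basepoint---and that the rank-one perturbation is itself random and jointly distributed with $\bN$. To handle this I would condition on $\bx$, use Assumption~\ref{ass:spike-prior} to show that the variance profile of $\bN$ is asymptotically constant in $i, j$, and then apply a local semicircle law for Wigner matrices with a general (but nearly constant) variance profile in the Erdős--Yau framework, which tolerates this heterogeneity and implies both the edge statistics and the outlier behavior under a rank-one perturbation. Bounding the error matrix $\bE$ is a secondary technical step: the cross-term submatrix has entry variance $O(\lambda^2/n^2)$, so by the trace method its operator norm is $O(\lambda/\sqrt n) = o(1)$, and the Taylor remainder is similarly dominated under the moment assumptions above. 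Finally, convergence in probability (not merely in expectation) of the top eigenvalue should follow from the concentration estimates accompanying the local law.
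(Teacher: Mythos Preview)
The statement you are attempting to prove is a \emph{conjecture} in the paper; there is no proof in the paper to compare against. The text introducing Conjecture~\ref{conj:censored-rmt} says explicitly that it ``seems non-trivial to attack using standard techniques as in, e.g., \cite{FP-2007-LargestEigenvalueWigner,CDMF-2009-DeformedWigner}, since the additive structure of the model has been corrupted,'' and offers only numerical evidence. So your proposal is not a comparison exercise but an attempt to resolve an open problem the paper poses.

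On the merits: your linearization strategy is essentially the one \cite{PWBM-2018-PCAI} uses in the uncensored case (Proposition~\ref{prop:pwbm} here), threaded through the censoring mask, and the outline is sound under extra hypotheses. Two points to tighten. First, you misidentify the obstacle: with your own decomposition $N_{ij}=M_{ij}f(W_{ij})$, the upper-triangular entries of $\bN$ are genuinely i.i.d.\ with mean zero and variance exactly $(1-\eta)F_{\sP}$, independent of $\bx$---there is no dependence on $x_ix_j$ ``through the Taylor basepoint,'' and no $O(1/\sqrt{n})$ correction to the variance. The heterogeneity you worry about lives entirely in the error matrix $\bE$, not in $\bN$, so the variance-profile machinery you invoke is unnecessary. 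Second, the honest gap is one of hypotheses: the conjecture as stated only assumes $p$ is positive, symmetric, and $\sC^4$, which does not by itself guarantee that $f(W)$, $f'(W)$, $f''(W)$ have the finite higher moments you need for Wigner edge universality, trace-method control of the cross term, and a uniform bound on the Lagrange remainder. You acknowledge importing the polynomial-growth and moment conditions of Proposition~\ref{prop:pwbm}; that is fine, but what you would then prove is the conjecture under those additional assumptions, not as stated. This may well be what the author had in mind by ``non-trivial'': removing those side conditions is where the work lies.
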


\subsection{Quantization: Proof of Theorem~\ref{thm:quantization}}
\label{sec:quantization}

We may prove our main result on quantization immediately.
\begin{proof}[Proof of Theorem~\ref{thm:quantization}]
    The result again follows by straightforward calculations.
    The channel likelihood ratio $L_x^{\sgn} \colonequals d\sgn(\sP)_x / d\sgn(\sP)_0$ takes values
    \begin{align}
      L_x^{\sgn}(1) &= \frac{\sP_x[\sgn(y) = 1]}{\sP_0[\sgn(y) = 1]} = 2\int_{-x}^{\infty} p(y)\,dy, \\
      L_x^{\sgn}(-1) &= \frac{\sP_x[\sgn(y) = -1]}{\sP_0[\sgn(y) = -1]} = 2\int_{-\infty}^{-x} p(y)\,dy,
    \end{align}
    where the latter formulas follow since we assume that $p$ is symmetric, so that $\sgn(\sP)_0 = \Unif(\{\pm 1\})$ and both denominators are $\frac{1}{2}$.
    The channel overlap is then:
    \begin{align*}
      R_{\sP}(x^{(1)}, x^{(2)})
      &= \Ex_{y \sim \sgn(\sP)_0} (L_{x^{(1)}}^{\sgn}(y) - 1)(L_{x^{(2)}}^{\sgn}(y) - 1) \\
      &= 2\left(\int_{-x^{(1)}}^{\infty} p(y)\,dy\right)\left(\int_{-x^{(2)}}^{\infty} p(y)\,dy\right) + 2\left(\int_{-\infty}^{-x^{(1)}} p(y)\,dy\right)\left(\int_{-\infty}^{-x^{(2)}} p(y)\,dy\right) \numberthis
    \end{align*}
    The Fisher information is then readily computed by differentiating with Leibniz's rule:
    \begin{equation}
        F_{\sP} = 4p(0)^2,
    \end{equation}
    concluding the proof.
\end{proof}

As with censorship, it is plausible that a spectral algorithm is optimal for quantized additive noise models.
This leads to another interesting random matrix theory conjecture, which is again readily verified numerically but seems outside of the reach of standard proof techniques.
\begin{conjecture}
    \label{conj:signed-rmt}
    Let $\bx$ be random as in Assumption~\ref{ass:spike-prior}, $p(y)$ a density as in Proposition~\ref{prop:channel-additive}, and $\sP$ the corresponding additive noise channel.
    Let $\bY^{(0)} \colonequals \frac{\lambda}{\sqrt{n}}\bx\bx^{\top} + \bW$ for $W_{ij} = W_{ji} \sim \sP_0$ i.i.d.\ (i.e., drawn with density $p$), and $W_{ii} = 0$.
    Let $\bY$ have entries $Y_{ij} \colonequals \sgn(Y^{(0)}_{ij})$.
    We conjecture that there exists some $\gamma \in \RR$ depending only on $p$ such that:
    \begin{enumerate}
    \item If $\lambda < 1 / (2p(0))$, then $\frac{1}{\sqrt{n}} \lambda_{\max}(\bY) \to \gamma$ in probability.
    \item If $\lambda > 1 / (2p(0))$, then there is $\epsilon = \epsilon(\lambda) > 0$ such that $\frac{1}{\sqrt{n}} \lambda_{\max}(\bY) \to \gamma + \epsilon$ in probability.
    \end{enumerate}
\end{conjecture}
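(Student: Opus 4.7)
The natural plan is to write $\bY = \bM + \bZ$ where $\bM \colonequals \EE[\bY \mid \bx]$ and $\bZ \colonequals \bY - \bM$, and then argue separately that (i) $\bM$ is a small perturbation of a rank-one matrix with prescribed top singular value, and (ii) $\bZ$ behaves asymptotically like a standard Wigner matrix. The conjectured transition would then follow from a BBP-type eigenvalue perturbation theorem applied to this decomposition.

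For the mean part, I would compute $M_{ij} = \PP[W_{ij} > -\tfrac{\lambda}{\sqrt n}x_i x_j] - \PP[W_{ij} < -\tfrac{\lambda}{\sqrt n}x_i x_j]$ and Taylor expand in $\tfrac{\lambda}{\sqrt n} x_i x_j$ using the smoothness and symmetry of $p$. Since $p$ is symmetric, odd derivatives of $y \mapsto 2\int_{-y}^{\infty}p(u)\,du - 1$ at $0$ dominate, and the leading term is $\tfrac{2\lambda p(0)}{\sqrt n}x_i x_j$ with cubic error $O(\lambda^3 n^{-3/2} |x_i x_j|^3)$. Because $\bx$ is bounded and $\|\bx\|_2^2 = (1+o(1))n$, the rank-one part $\tfrac{2\lambda p(0)}{\sqrt n}\bx\bx^\top$ has top eigenvalue $(1+o(1)) \cdot 2\lambda p(0)\sqrt n$, while the residual has Frobenius norm $O(\lambda^3/\sqrt n)$ and hence operator norm $o(\sqrt n)$. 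Thus $\tfrac{1}{\sqrt n}\bM$ looks like a rank-one matrix of spectral norm $\theta \colonequals 2\lambda p(0) + o(1)$.

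For the fluctuation part, observe that, conditionally on $\bx$, the entries $Z_{ij}$ are independent, bounded by $2$, mean-zero, and have variance $1 - \tfrac{4\lambda^2 p(0)^2}{n}x_i^2x_j^2 + O(n^{-2}) = 1 - O(1/n)$. This places $\bZ$ in the class of \emph{generalized Wigner matrices} with uniformly bounded entries and essentially unit variance, for which modern edge-universality results (Erd\H{o}s--Yau--Schlein--Yin; Ajanki--Erd\H{o}s--Kr\"uger; Benaych-Georges--P\'ech\'e) establish $\tfrac{1}{\sqrt n}\|\bZ\|_{\mathrm{op}} \to 2$ and semicircle convergence of the bulk, conditional on $\bx$ lying in a high-probability set where the variance profile is close to constant in a suitable sense. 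One then applies the Benaych-Georges--Nadakuditi perturbation theorem (or Capitaine--Donati-Martin--F\'eral for non-i.i.d.\ noise) to the decomposition $\bY/\sqrt n = \bM/\sqrt n + \bZ/\sqrt n$: if the rank-one part has spectral norm $\theta$, the top eigenvalue of the sum converges to $2$ when $\theta < 1$ and to $\theta + 1/\theta$ when $\theta > 1$. Substituting $\theta = 2\lambda p(0)$ gives the threshold $\lambda = 1/(2p(0))$, with $\gamma = 2$.

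The principal obstacle is the assumption needed for step (ii): the variance profile of $\bZ$ has a rank-one modulation tied to $\bx\bx^\top$ itself, so the perturbation and the ``background'' are not independent in the usual sense. Off-the-shelf BBP statements assume either i.i.d.\ noise or a variance profile independent of the spike, whereas here the two are genuinely correlated; one must check that the free-additive-convolution / subordination machinery continues to pin the edge at $2$ under a variance profile of the form $1 - c \cdot x_i^2 x_j^2 / n$. I expect this can be controlled by (a) conditioning on $\bx$ lying in a typical set where $\tfrac{1}{n}\sum_i x_i^2 \to 1$ and $\tfrac{1}{n^2}\sum_{i,j} x_i^2 x_j^2 \to 1$, reducing the average variance deficit to $O(1/n)$, and (b) an interpolation/Lindeberg argument swapping the non-identical variances for a common one at the cost of $o_{\PP}(1)$ in the top eigenvalue. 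A cleaner but heavier alternative is to invoke the local semicircle law for inhomogeneous Wigner matrices and derive the edge behavior from the associated Dyson equation, which is known to be stable under $O(1/n)$ variance perturbations. Either route looks technically plausible, but neither is routine, which is consistent with the paper's remark that the conjecture is outside the reach of standard techniques such as \cite{FP-2007-LargestEigenvalueWigner,CDMF-2009-DeformedWigner}.
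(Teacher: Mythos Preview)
This statement is a \emph{conjecture} in the paper, not a theorem; the paper gives no proof and explicitly says it ``seems outside of the reach of standard proof techniques,'' noting that the closest available results (those of \cite{GKKMZ-2023-NonLinearWignerBBP}) would require the entrywise nonlinearity to be smooth rather than $\sgn(\cdot)$. There is therefore no paper proof to compare your proposal against.

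That said, your outline is the natural one and the computations you sketch are correct: writing $\bY = \bM + \bZ$ with $\bM = \EE[\bY \mid \bx]$, one has $M_{ij} = F(\tfrac{\lambda}{\sqrt n}x_ix_j)$ for $F(y) = 2\int_{-y}^{\infty}p(u)\,du - 1$, and since $F(0) = 0$, $F'(0) = 2p(0)$, and $F''(0) = 0$ by symmetry of $p$, the rank-one approximation $\bM \approx \tfrac{2\lambda p(0)}{\sqrt n}\bx\bx^{\top}$ is accurate to the order you claim. Likewise, since $Y_{ij} \in \{\pm 1\}$, conditionally on $\bx$ the entries $Z_{ij}$ are independent, centered, bounded by $2$, with variance $1 - M_{ij}^2 = 1 - O(1/n)$, and your identification of the BBP threshold $\theta = 2\lambda p(0) = 1$ and $\gamma = 2$ is exactly what the heuristic predicts.

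You have also correctly located the genuine gap: conditionally on $\bx$, the matrix $\bZ$ is a generalized Wigner matrix whose variance profile and higher moments carry a rank-one modulation tied to $\bx\bx^{\top}$ itself, so the ``noise'' and the ``spike'' are not independent in the sense required by off-the-shelf deformed-Wigner results such as \cite{FP-2007-LargestEigenvalueWigner,CDMF-2009-DeformedWigner}. Your two suggested routes---conditioning on a typical $\bx$ and running a Lindeberg swap to a constant variance profile, or invoking an inhomogeneous local law and showing stability of the Dyson-equation edge under $O(1/n)$ perturbations---are both plausible, and the fact that the entries of $\bZ$ are bounded shifted Bernoullis (so all moments are controlled by $M_{ij} = O(n^{-1/2})$) is favorable for either. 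But carrying this out rigorously, in particular controlling the edge rather than just the bulk under a spike-correlated variance profile, is precisely the non-routine step the paper flags as open; your proposal is a credible plan of attack, not yet a proof.
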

\noindent
Similar models are treated by the recent work \cite{GKKMZ-2023-NonLinearWignerBBP}, but those results would only apply if the function $\sgn(y)$ in our setting were replaced by a smoother one.

We mention the computations with the calculus of variations that lead to the special densities presented in Corollary~\ref{cor:exp-quantization} as well; we do not give a formal proof, which follows by straightforward calculus.
Instead of maximizing the quantized Fisher information, let us fix it (in other words, fix $p(0)$), and minimize the unquantized Fisher information subject to this constraint.
Suppose then that $p(x)$ is a probability density that minimizes the Fisher information $F[p] = \int_{-\infty}^{\infty} \frac{p^{\prime}(y)^2}{p(y)}dy$ viewed as a functional of $p$, subject to a fixed value of $p(0) = c$.
Then, it must be that the first variation of $F[p]$ is zero, i.e., for any smooth, compactly supported, symmetric $\delta(y)$ with $\delta(0) = 0$, $\int_{-\infty}^{\infty}\delta(y)dy = 0$, and $|\delta(y)| < \epsilon$ for sufficiently small $\epsilon$ (so that adding $\delta$ to $p$ does not change $p(0)$ and leaves $p$ a probability density), we must have that $F[p + \delta] = F[p]$ to leading order.
Expanding this, we have
\begin{align*}
  F[p + \delta]
  &= 2\int_0^{\infty}\frac{(p^{\prime}(y) + \delta^{\prime}(y))^2}{p(y) + \delta(y)}dy \\
  &\approx 2\int_0^{\infty}(p^{\prime}(y)^2 + 2p^{\prime}(y)\delta^{\prime}(y))\left(\frac{1}{p(y)} + \frac{\delta(y)}{p(y)^2}\right)dy \\
  &\approx F[p] + 2\int_0^{\infty}\left(\left(\frac{p^{\prime}(y)}{p(y)}\right)^2 \delta(y) - 2\frac{p^{\prime}(y)}{p(y)}\delta^{\prime}(y)\right)dy. \numberthis
\end{align*}
The only way this can hold for all $\delta$ satisfying our conditions is if $p^{\prime}(y) / p(y)$ is constant, which means that $p(y) = c\exp(-2c|y|)$, a Laplace or two-sided exponential distribution.
The densities featuring in Corollary~\ref{cor:exp-quantization} may be obtained by, say, convolving this density with a Gaussian of variance $\epsilon$ to obtain a sequence of smooth approximations.

Lastly, having performed this derivation, let us retrospectively give some intuition as to why this noise density might be especially resilient to quantization.
Recall from the discussion in Section~\ref{sec:pf:thm:dilution} that in such a non-Gaussian additive model, it is sensible to apply the entrywise transformation of the local Fisher score
\begin{equation}
    f(y) = \frac{1}{F_{\sP}} \frac{\partial}{\partial x}L_x(y) \bigg|_{x = 0} = -\frac{1}{F_{\sP}} \frac{p^{\prime}(y)}{p(y)}
\end{equation}
which approximates a minimum variance unbiased estimator of $x$ from $y$.
But, if we apply this to $p(y) = c\exp(-2c|y|)$, then we get (up to constants) precisely the function $\sgn(y)$ (and if we consider a smooth approximation of such $p(y)$, we will have a smooth approximation of the sign function).
Thus taking the sign of the observations is a transformation we would want to perform anyway on data with this noise distribution, and quantization is relatively benign.

\section*{Acknowledgments}
\addcontentsline{toc}{section}{Acknowledgments}

I thank the participants of the BIRS Workshop on Computational Complexity of Statistical Inference for thoughtful comments following a preliminary presentation of these results, especially Aaron Potechin for pointing out a flaw in an attempted generalization of the results of Appendix~\ref{sec:coord-general}, and Florent Krzakala and Alex Wein for clarifying aspects of the papers \cite{LKZ-2015-LowRankChannelUniversality, PWBM-2018-PCAI}.
I also thank Rayan Saab for bringing the topic of quantization to my attention.

\addcontentsline{toc}{section}{References}
\bibliographystyle{alpha}
\bibliography{main}

\appendix

\section{General tools for coordinate decomposition}
\label{sec:coord-general}

\subsection{Structure of coordinate subspaces}

We first present a useful decomposition of the $V_T$ and $V_{\leq D}$ into orthogonal subspaces and descriptions of orthogonal projection operators onto these subspaces.
We do not need the structure of an LVM, and just work with $\QQ = \sQ_1 \otimes \cdots \otimes \sQ_N$ an arbitrary product measure for $\sQ_i$ probability measures over a measurable space $\Omega$ (it is trivial but notationally cumbersome to allow $\Omega$ to depend on $i$).
The definitions below are an instance of the \emph{Efron-Stein decomposition}, as presented in, e.g., Section~8.3 of \cite{ODonnell-2014-AnalysisBooleanFunctions}.

\begin{definition}[Averaging operator]
    \label{def:Avg}
    For each $T \in [N]$, we write $\Avg_T: L^2(\QQ) \to V_{[N] \setminus T}$ for the operator that maps
    \begin{equation}
        (\Avg_T f)(\by) \colonequals \Ex_{\by_T}[f(\by)] = \EE[f(\by) \mid \by_{[N] \setminus T}].
    \end{equation}
    We write $\Avg_i \colonequals \Avg_{\{i\}}$.
\end{definition}
\noindent
Concretely, the averaging operator for a single coordinate $\Avg_i$ simply ``integrates out'' the $i$th coordinate:
\begin{equation}
    (\Avg_i f)(\by) = \int f(y_1, \dots, y_{i - 1}, \widetilde{y}, y_{i + 1}, \dots, y_N) \, d\sQ_i(\widetilde{y}).
\end{equation}

\begin{proposition}
    \label{prop:V}
    The $V_T$ and $\Avg_T$ satisfy the following properties:
    \begin{enumerate}
    \item $V_S \cap V_T = V_{S \cap T}$.
    \item The $\Avg_i$ commute (i.e., $\Avg_i \Avg_j f = \Avg_j \Avg_i f$ for all $f \in L^2(\QQ)$).
    \item $\Avg_T = \prod_{i \in T} \Avg_i$.
    \item The orthogonal projection operator to $V_T$ is $\Avg_{[N] \setminus T}$.
    \item The subspaces $V_T$ meet orthogonally; that is, for any distinct $S, T \subseteq [N]$, $(V_S \cap V_T)^{\perp} \cap V_S$ is orthogonal to $(V_S \cap V_T)^{\perp} \cap V_T$.
    \end{enumerate}
\end{proposition}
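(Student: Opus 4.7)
The plan is to establish the five properties in the order 2, 3, 4, 1, 5, since each builds on the previous. For properties 2 and 3 I would simply invoke Fubini's theorem on the product measure $\QQ = \sQ_1 \otimes \cdots \otimes \sQ_N$: since $\Avg_i$ is integration in the $i$th factor against $\sQ_i$ while holding the other coordinates fixed, two such operators commute because the iterated integrals can be exchanged, and iterating gives $\Avg_T f = \int f \, d\sQ_{i_1} \cdots d\sQ_{i_k}$ for any ordering of $T = \{i_1, \dots, i_k\}$, which is precisely the content of property 3.

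For property 4, I would check the three defining characteristics of an orthogonal projection onto $V_T$. First, $\Avg_{[N]\setminus T} f$ does not depend on $\by_{[N]\setminus T}$ by construction, so it lies in $V_T$. Second, if $f \in V_T$ then the integrand in $\Avg_{[N]\setminus T} f$ does not depend on the variables being integrated out, so $\Avg_{[N]\setminus T} f = f$, proving idempotence. Third, for $f \in L^2(\QQ)$ and $g \in V_T$, writing $\langle f - \Avg_{[N]\setminus T} f, g \rangle_{\QQ}$ by conditioning on $\by_T$ and pulling the $\by_T$-measurable factor $g$ outside the inner integral over $\by_{[N]\setminus T}$, the inner integral becomes $\Ex_{\by_{[N]\setminus T}}[f(\by)] - (\Avg_{[N]\setminus T} f)(\by) = 0$. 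Property 1 then follows quickly: the inclusion $V_{S\cap T} \subseteq V_S \cap V_T$ is obvious, while if $f \in V_S \cap V_T$, then $\Avg_i f = f$ for every $i \notin S$ and every $i \notin T$, hence for every $i \notin S \cap T$; applying property 3 yields $\Avg_{[N]\setminus (S\cap T)} f = f$, placing $f$ in $V_{S\cap T}$ via property 4.

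The heart of the proposition, and the step I expect to require the most care, is property 5. Let $f \in V_S$ with $f \perp V_{S\cap T}$ and $g \in V_T$ with $g \perp V_{S\cap T}$; the goal is $\langle f, g \rangle_{\QQ} = 0$. The key identity is that $\Avg_{S\setminus T} f$ equals the orthogonal projection of $f$ onto $V_{S\cap T}$: using the disjoint decomposition $[N]\setminus (S\cap T) = ([N]\setminus S) \sqcup (S\setminus T)$ together with property 3 and the fact that $\Avg_i f = f$ for every $i \notin S$, one computes $\Avg_{[N]\setminus (S\cap T)} f = \Avg_{S\setminus T}\,\Avg_{[N]\setminus S} f = \Avg_{S\setminus T} f$, and property 4 identifies the left side as the projection of $f$ onto $V_{S\cap T}$. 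By hypothesis this projection vanishes, so $\Avg_{S\setminus T} f = 0$, and symmetrically $\Avg_{T\setminus S} g = 0$. To conclude, I would compute $\langle f, g\rangle_{\QQ}$ by conditioning on $\by_{S\cap T}$: since $\by_{S\setminus T}$ and $\by_{T\setminus S}$ are conditionally independent given $\by_{S\cap T}$ under the product measure $\QQ$, and $f$ depends only on $\by_S = \by_{S\cap T} \cup \by_{S\setminus T}$ while $g$ depends only on $\by_T = \by_{S\cap T} \cup \by_{T\setminus S}$, the conditional expectation factors as $(\Avg_{S\setminus T} f)(\Avg_{T\setminus S} g)$, which is identically zero. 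The main obstacle is purely the set-theoretic bookkeeping to ensure the partitions and commutations of averaging operators line up correctly; no analytic difficulty arises beyond repeated application of Fubini.
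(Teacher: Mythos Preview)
Your proof is correct. For Claims 2--4 it matches the paper's argument essentially verbatim (Fubini for commutativity, and verifying idempotence plus orthogonality for Claim~4). For Claim~1 the paper simply says ``immediate by definition,'' while you derive it from Claims~3 and~4; your version is slightly more explicit but equivalent.

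The only genuine difference is in Claim~5. The paper works at the operator level: since the projections $\Avg_{[N]\setminus S}$ and $\Id - \Avg_{[N]\setminus(S\cap T)}$ commute, their product $\Avg_{[N]\setminus S} - \Avg_{[N]\setminus(S\cap T)}$ is the orthogonal projection onto $V_{S\cap T}^{\perp}\cap V_S$; one then expands the product of this with the analogous projection for $T$ and observes that all four terms equal $\Avg_{[N]\setminus(S\cap T)}$, cancelling in pairs. You instead argue elementwise: for $f\in V_S\cap V_{S\cap T}^{\perp}$ and $g\in V_T\cap V_{S\cap T}^{\perp}$ you show $\Avg_{S\setminus T}f=0$ and $\Avg_{T\setminus S}g=0$, then factor $\Ex[fg\mid \by_{S\cap T}]$ using conditional independence of $\by_{S\setminus T}$ and $\by_{T\setminus S}$ under the product measure. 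Both arguments are short and essentially dual to one another; the paper's is a clean algebraic cancellation, while yours is more probabilistic and avoids invoking the fact that the product of commuting orthogonal projections is the projection to the intersection of their ranges.
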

\begin{proof}
    Claims 1 and 3 are immediate by definition, and Claim 2 by Fubini's theorem.
    Claim 4 is a well-known property of conditional expectation (see, e.g., Exercise 4.8 of \cite{Varadhan-2001-ProbabilityTheory}), but may be verified concretely as follows.
    First, clearly from the definition, $\Avg_{[N] \setminus T}$ is idempotent, has $V_T$ as its image, and acts as the identity on $V_T$.
    So, $\Avg_{[N] \setminus T}$ is a projection to $V_T$, and it only suffices to verify that it is orthogonal.
    To do this, we suppose $f \in L^2(\QQ)$ and $g \in V_T$, and compute
    \begin{align*}
      \Ex_{\by \sim \QQ} (f(\by) - \Avg_{[N] \setminus T} f(\by))g(\by)
      &= \Ex_{\by \sim \QQ}f(\by)g(\by) - \Ex_{\by \sim \QQ} \Avg_{[N] \setminus T} f (\by) g(\by) \\
      &= \Ex_{\by \sim \QQ} f(\by) g(\by) - \Ex_{\by \sim \QQ} \EE[f(\by) \mid \by_{T}] \, g(\by) \\
      &= \Ex_{\by \sim \QQ} f(\by) g(\by) - \Ex_{\by \sim \QQ} \EE[f(\by)g(\by) \mid \by_{T}] \\
      &= 0, \numberthis
    \end{align*}
    where we have used that $g(\by)$ depends only on $\by_T$ and the tower property of conditional expectation.
    For Claim~5, note by Claim 1 that $(V_S \cap V_T)^{\perp} \cap V_S = V_{S \cap T}^{\perp} \cap V_S$, and by Claim~4 the orthogonal projection to $V_{S \cap T}^{\perp}$ is $\Id - \Avg_{[N] \setminus (S \cap T)}$ and the orthogonal projection to $V_S$ is $\Avg_{[N] \setminus S}$.
    In particular, these projections commute, so the orthogonal projection to $(V_S \cap V_T)^{\perp} \cap V_S$ is their product, $\Avg_{[N] \setminus S}(\Id - \Avg_{[N] \setminus (S \cap T)}) = \Avg_{[N] \setminus S} - \Avg_{[N] \setminus (S \cap T)}$.
    Similarly, the orthogonal projection to $(V_S \cap V_T)^{\perp} \cap V_T$ is $\Avg_{[N] \setminus S} - \Avg_{[N] \setminus (S \cap T)}$.
    Finally, it suffices to check that the product of these projections is zero:
    \begin{equation}
        (\Avg_{[N] \setminus S} - \Avg_{[N] \setminus (S \cap T)})(\Avg_{[N] \setminus T} - \Avg_{[N] \setminus (S \cap T)}) = 0,
    \end{equation}
    since each of the four terms upon expanding is $\Avg_{[N] \setminus (S \cap T)}$, occurring twice positively and twice negatively.
\end{proof}

We now define a subtler family of subspaces, which aim to capture the ``new'' functions depending only on $\by_T$ that $V_T$ contains, beyond those that are linear combinations of ones depending on smaller coordinate subsets.
\begin{definition}
    For each $T \subseteq [N]$, $\what{V}_T \colonequals V_T \cap (\sum_{S \subsetneq T} V_S)^{\perp}$.
\end{definition}
\noindent
We establish the following important properties of these subspaces.

\begin{proposition}
    \label{prop:Vhat}
    The $\what{V}_T$ satisfy the following properties:
    \begin{enumerate}
    \item The subspaces $\what{V}_T$ are mutually orthogonal; that is, for any distinct $S, T \subseteq [N]$ and $f \in \what{V}_S$ and $g \in \what{V}_T$, we have $\EE_{\by \sim \QQ} f(\by)g(\by) = 0$.
    \item $V_T = \bigoplus_{S \subseteq T} \what{V}_S$; in particular, $L^2(\QQ) = V_{[N]} = \bigoplus_{T \subseteq [N]} \what{V}_T$.
    \item The orthogonal projection operator to $\what{V}_T$ is
        \begin{equation}
            \prod_{i \in T} (\Id - \Avg_i) \prod_{i \in [N] \setminus T} \Avg_i = \sum_{S \subseteq T} (-1)^{|T| - |S|} \Avg_{[N] \setminus S}
            \label{eq:Vhat-proj}
        \end{equation}
    \end{enumerate}
\end{proposition}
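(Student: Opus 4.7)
The plan is to prove the three claims in the reverse order listed, deriving Claims 1 and 2 from the projection formula of Claim 3. The central observation will be that, by Proposition~\ref{prop:V}, each $\Avg_i$ is a self-adjoint idempotent (an orthogonal projection), all the $\Avg_i$ mutually commute, and therefore each $\Id - \Avg_i$ is also an orthogonal projection and all of $\{\Avg_i, \Id - \Avg_j\}_{i, j \in [N]}$ commute. In particular, the operator $P_T \colonequals \prod_{i \in T}(\Id - \Avg_i) \prod_{i \in [N] \setminus T} \Avg_i$ is a product of commuting orthogonal projections, hence itself an orthogonal projection, onto the intersection of the images of its factors.

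For Claim~3, I first identify these images: $\img(\Avg_i) = V_{[N] \setminus \{i\}}$ and $\img(\Id - \Avg_i) = V_{[N] \setminus \{i\}}^\perp$. Taking the intersection and using Property~1 of Proposition~\ref{prop:V} iteratively gives $\img(P_T) = V_T \cap \bigl(\sum_{i \in T} V_{[N] \setminus \{i\}}\bigr)^\perp$. The step requiring care is identifying this image with $\what{V}_T = V_T \cap \bigl(\sum_{S \subsetneq T} V_S\bigr)^\perp$; the two expressions for the orthogonal complement a priori differ, since one ranges over ``large'' subspaces $V_{[N]\setminus\{i\}}$ and the other over ``small'' subspaces $V_S \subseteq V_T$. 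One inclusion ($\supseteq$) follows because every $S \subsetneq T$ is contained in some $T \setminus \{i\} \subset [N] \setminus \{i\}$, so $V_S \subseteq V_{[N] \setminus \{i\}}$. For the reverse, given $f \in V_T$ perpendicular to all $V_{T \setminus \{i\}}$ and an arbitrary $g \in V_{[N] \setminus \{i\}}$ with $i \in T$, I will write $\langle f, g \rangle = \langle f, \Avg_{[N] \setminus T}\, g\rangle$ using self-adjointness of $\Avg_{[N] \setminus T}$ (Property~4) together with $\Avg_{[N] \setminus T} f = f$, and observe that $\Avg_{[N]\setminus T}\, g \in V_T \cap V_{[N]\setminus\{i\}} = V_{T \setminus \{i\}}$, so the inner product vanishes. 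This is the main technical obstacle, but it is a short calculation once the right expression is set up. The alternating sum representation in \eqref{eq:Vhat-proj} then follows by expanding $\prod_{i \in T}(\Id - \Avg_i)$ term by term and using $\prod_{i \in U} \Avg_i \cdot \Avg_{[N] \setminus T} = \Avg_{[N] \setminus (T \setminus U)}$, with the substitution $S = T \setminus U$.

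For Claim~1, given distinct $S, T \subseteq [N]$, pick $i$ in their symmetric difference; without loss of generality $i \in T \setminus S$. Then $P_T$ contains the factor $(\Id - \Avg_i)$ while $P_S$ contains the factor $\Avg_i$. Since all factors commute and $\Avg_i(\Id - \Avg_i) = \Avg_i - \Avg_i^2 = 0$ by idempotency, the composition $P_S P_T = 0$. As $P_S$ and $P_T$ are the orthogonal projections onto $\what{V}_S$ and $\what{V}_T$, this shows these subspaces are mutually orthogonal.

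For Claim~2, the direct sum $\bigoplus_{S \subseteq T} \what{V}_S \subseteq V_T$ is meaningful by Claim~1, and it suffices to show the corresponding projections sum to the projection onto $V_T$, namely $\Avg_{[N] \setminus T}$. I will sum the operator expressions: $\sum_{S \subseteq T} P_S = \sum_{S \subseteq T}\prod_{i \in S}(\Id - \Avg_i) \prod_{i \in [N]\setminus S} \Avg_i$, and rearrange so that for each $i \in T$ one sums over whether $i \in S$ or not, yielding the factor $\Avg_i + (\Id - \Avg_i) = \Id$, while each $i \in [N] \setminus T$ always contributes $\Avg_i$. The total is thus $\prod_{i \in [N] \setminus T} \Avg_i = \Avg_{[N] \setminus T}$, which by Property~4 is the orthogonal projection onto $V_T$. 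This proves the decomposition and, specializing $T = [N]$, the decomposition of $L^2(\QQ)$.
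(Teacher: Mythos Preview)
Your proof is correct and takes a genuinely different route from the paper's. The paper proceeds in the order 1--2--3: Claim~1 is deduced from the ``orthogonal meeting'' property (Claim~5 of Proposition~\ref{prop:V}), Claim~2 is proved by induction on $|T|$, and then Claim~3 is obtained from Claim~2 via M\"obius inversion applied to the identity $\sum_{S \subseteq T} \what{P}_S = \Avg_{[N]\setminus T}$. You instead start from the explicit operator $P_T = \prod_{i\in T}(\Id-\Avg_i)\prod_{i\in[N]\setminus T}\Avg_i$, recognize it as a product of commuting orthogonal projections, and identify its image directly with $\what{V}_T$; Claims~1 and~2 then drop out algebraically from $\Avg_i(\Id-\Avg_i)=0$ and the binomial-type expansion $\prod_{i\in T}[(\Id-\Avg_i)+\Avg_i]=\Id$. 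Your approach is arguably cleaner: it avoids both the induction and the appeal to the orthogonal-meeting lemma, and it makes transparent why the Efron--Stein pieces form a resolution of the identity. The paper's approach, by contrast, keeps the geometric definition of $\what{V}_T$ front and center and derives the projection formula only as a consequence.

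One small slip: in your argument for Claim~3 you have the two inclusion labels swapped. The observation that each $V_S$ with $S\subsetneq T$ sits inside some $V_{[N]\setminus\{i\}}$ gives $\sum_{S\subsetneq T}V_S \subseteq \sum_{i\in T}V_{[N]\setminus\{i\}}$ and hence $\img(P_T)\subseteq \what{V}_T$, not $\supseteq$; your ``reverse'' computation with $\langle f,\Avg_{[N]\setminus T}\,g\rangle$ is what actually establishes $\what{V}_T \subseteq \img(P_T)$. Both arguments are correct as written, only the tags are interchanged.
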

\begin{proof}
    For Claim 1, note that $\what{V}_S \subseteq V_{S \cap T}^{\perp} \cap V_S$ and $\what{V}_T \subseteq V_{S \cap T}^{\perp} \cap V_T$.
    The result then follows by Claims 1 and 5 of Proposition~\ref{prop:V}.

    For Claim 2, we may proceed by induction on $|T|$.
    The result holds for $T = \emptyset$, in which case $V_T = \what{V}_T$ is the span of the constant function.
    Now, suppose the result holds for all $|T| \leq k - 1$, and we have $|T| = k$.
    For any $T^{\prime} \subsetneq T$, we have by the inductive hypothesis $\bigoplus_{S \subseteq T} \what{V}_S \supseteq \bigoplus_{S \subseteq T^{\prime}} \what{V}_S = V_{T^{\prime}}$.
    In particular, $\bigoplus_{S \subseteq T} \what{V}_S \supseteq \sum_{S \subsetneq T} V_S$.
    On the other hand, we also have $\bigoplus_{S \subseteq T} \what{V}_S \supseteq \what{V}_T = V_T \cap (\sum_{S \subsetneq T} V_S)^{\perp}$.
    Thus, $\bigoplus_{S \subseteq T} \what{V}_S \supseteq \sum_{S \subsetneq T} V_S + V_T \cap (\sum_{S \subsetneq T} V_S)^{\perp} \supseteq V_T$.
    The opposite inclusion is immediate, completing the induction.

    For Claim 3, note that, by Claim 2, writing $\what{P}_T$ for the orthogonal projection to $\what{V}_T$, we have $\sum_{S \subseteq T} \what{P}_S = \Avg_{[N] \setminus T}$.
    The result, in the form of the right-hand side of \eqref{eq:Vhat-proj}, then follows by the \Mobius\ inversion formula.
\end{proof}

It may be instructive to identify the manifestations of these somewhat abstract objects in Boolean function theory.
\begin{example}[Boolean Fourier analysis]
    \label{ex:boolean-fourier}
    Consider the case $\Omega = \{\pm 1\}$ and $\QQ = \Unif(\{\pm 1\}^N)$.
    Then, $L^2(\QQ)$ is simply the set of all functions $f: \{\pm 1\}^N \to \RR$.
    As is well-known, every such function may be written as a polynomial of multilinear monomials,
    \begin{equation}
        f(\by) = \sum_{T \subseteq [N]} \what{f}(T) \by^T,
    \end{equation}
    also known as the Boolean Fourier expansion of $f$, where $\by^T = \prod_{i \in T}y_i$.
    The monomials $\by^T$ form an orthonormal basis for $L^2(\QQ)$.

    In this representation, $V_T$ is the span of all $\by^S$ for $S \subseteq T$.
    As a consequence, the degree of $f$ (as a polynomial in the Fourier representation) equals the coordinate degree of $f$.
    The operator $\Avg_i$ acts on $f$ by removing all monomials including $y_i$ from this representation,
    \begin{equation}
        (\Avg_if)(\by) = \sum_{T \subseteq [N] \setminus \{i\}} \what{f}(T) \by^T,
    \end{equation}
    so indeed $\prod_{i \in [N] \setminus T} \Avg_i$ is the orthogonal projection to $V_T$.
    The commutativity of these $\Avg_i$ is also clear.

    $\what{V}_T$ consists of the functions whose monomials only have indices lying in $T$ (i.e., belonging to $V_T$) but also having no monomials whose indices are contained in any proper subset of $T$ (i.e., orthogonal to $\sum_{S \subsetneq T} V_S$).
    Therefore, $\what{V}_T$ is merely the span of the single monomial function $\by^T$.
    We may equate this with the projection formula \eqref{eq:Vhat-proj} by noting that $\Id - \Avg_i$ acts by removing all monomials not including $y_i$ from the Fourier representation, and thus the only monomial that the projection does not remove is $\by^T$.

    Similar results equating coordinate with polynomial degree and Efron-Stein decomposition with Boolean Fourier decomposition hold also for a domain $\Omega^N$ with $\Omega$ a finite set, where polynomials may be made sense of through a ``one-hot'' encoding of $\bx \in \Omega^N$.
\end{example}
\noindent
We thus see that the orthogonal decomposition $L^2(\QQ) = \bigoplus_{T \subseteq [N]} \what{V}_T$ may be viewed as a generalization of Boolean Fourier analysis to arbitrary multivariate $L^2$ spaces coming from a product measure.

Finally, this machinery gives us a simple way to project arbitrary functions to $V_{\leq D}$.
The condensed formula below will not prove very useful to us---we will prefer to view the projection to $V_{\leq D}$ as a sum of projections to the orthogonal components $\what{V}_T$---but we emphasize that it gives a fully explicit description of this projection as a linear combination of averaging operators.
\begin{lemma}
    \label{lem:PVD}
    The orthogonal projection operator to $V_{\leq D}$ is
    \begin{equation}
        P_{\leq D} = \sum_{\substack{T \subseteq [N] \\ |T| \leq D}} (-1)^{D - |T|} \binom{N - |T| - 1}{D - |T|} \Avg_{[N] \setminus T}.
    \end{equation}\
\end{lemma}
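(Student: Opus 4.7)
The plan is to sum the projection formulas for the orthogonal components $\what{V}_T$ already computed in Proposition~\ref{prop:Vhat} and then simplify using a standard binomial identity. Concretely, by Claim~2 of Proposition~\ref{prop:Vhat} we have the orthogonal decomposition $V_{\leq D} = \bigoplus_{T \subseteq [N],\,|T| \leq D} \what{V}_T$, so $P_{\leq D} = \sum_{|T| \leq D} \what{P}_T$, where $\what{P}_T$ is the orthogonal projection to $\what{V}_T$. Claim~3 of Proposition~\ref{prop:Vhat} gives
\begin{equation}
    \what{P}_T = \sum_{S \subseteq T} (-1)^{|T| - |S|} \Avg_{[N] \setminus S},
\end{equation}
so it remains only to assemble these and regroup.

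Next I would swap the order of summation, collecting the coefficient of $\Avg_{[N] \setminus S}$ for each fixed $S \subseteq [N]$ with $|S| \leq D$. Writing $s = |S|$, the $T$'s contributing are exactly those $T \supseteq S$ with $s \leq |T| \leq D$, and for each size $t$ there are $\binom{N - s}{t - s}$ such $T$'s, each contributing sign $(-1)^{t - s}$. Thus the coefficient of $\Avg_{[N] \setminus S}$ is
\begin{equation}
    \sum_{t = s}^{D} (-1)^{t - s} \binom{N - s}{t - s} = \sum_{j = 0}^{D - s} (-1)^j \binom{N - s}{j}.
\end{equation}

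The last step is the elementary binomial identity
\begin{equation}
    \sum_{j = 0}^{k} (-1)^j \binom{n}{j} = (-1)^k \binom{n - 1}{k},
\end{equation}
which one can prove by induction on $k$ using Pascal's rule $\binom{n}{j} = \binom{n-1}{j} + \binom{n-1}{j-1}$ (telescoping the alternating sum). Applying this with $n = N - s$ and $k = D - s$ turns the coefficient of $\Avg_{[N] \setminus S}$ into $(-1)^{D - |S|} \binom{N - |S| - 1}{D - |S|}$, matching the claimed formula. There is no real obstacle here; the only thing to watch is the bookkeeping in the swap of summations and the boundary case $N - |T| - 1 = -1$ when $|T| = N$, which is handled by the convention $\binom{-1}{k} = (-1)^k$ (equivalently, $|T| = N$ only contributes when $D = N$, in which case $P_{\leq D} = \Id$ and the formula is easily checked directly).
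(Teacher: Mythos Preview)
Your proof is correct and follows essentially the same approach as the paper's own proof: both sum the projections $\what{P}_T$ from Proposition~\ref{prop:Vhat}, swap the order of summation to collect the coefficient of each $\Avg_{[N]\setminus S}$, and reduce to the alternating partial sum of binomials. You in fact spell out the final ``combinatorial calculation'' (the identity $\sum_{j=0}^{k}(-1)^j\binom{n}{j}=(-1)^k\binom{n-1}{k}$) that the paper leaves implicit, and you note the boundary case $|T|=N$, so your argument is slightly more complete.
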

\begin{proof}
    We use that, per Proposition~\ref{prop:Vhat}, $V_{\leq D}$ is the direct sum of orthogonal components $\what{V}_T$ for $|T| \leq D$.
    Thus, $P_{\leq D}$ is the sum of the corresponding projections:
    \begin{align*}
      P_{\leq D}
      &= \sum_{\substack{T \subseteq [N] \\ |T| \leq D}} \what{P}_T \\
  &= \sum_{\substack{T \subseteq [N] \\ |T| \leq D}} \Avg_{[N] \setminus T} \,\, \sum_{S \subseteq T} (-1)^{|S|} \Avg_S \\
  &= \sum_{\substack{T \subseteq [N] \\ |T| \leq D}} \sum_{S \subseteq T} (-1)^{|S|} \Avg_{[N] \setminus (T \setminus S)}
  \intertext{where, introducing $R \colonequals T \setminus S$,}
  &= \sum_{\substack{R \subseteq [N] \\ |R| \leq D}} \left(\sum_{\substack{R \subseteq T \subseteq [N] \\ |T| \leq D}} (-1)^{|T| - |R|}\right)\Avg_{[N] \setminus R} \\
  &= \sum_{\substack{R \subseteq [N] \\ |R| \leq D}} \left(\sum_{t = |R|}^{D} (-1)^{t - |R|}\binom{N - |R|}{t - |R|} \right)\Avg_{[N] \setminus R}, \numberthis
\end{align*}
and the final formula follows from a combinatorial calculation.
\end{proof}

\subsection{Alternative derivation with concrete bases}

The above framework may also be established more concretely using the bases of orthogonal polynomials usually used in the analysis of LDP.
More generally, for each $i \in [N]$, let $\{f_{i, j}\}_{j \geq 0}$ be a countable basis of (univariate) orthonormal functions in $L^2(\sQ_i)$, with $f_{i, 0} = 1$.
These then form a countable orthonormal product basis $\{f_{\bm j}\}_{\bm j \in \NN^N}$ of $L^2(\QQ)$ defined by $f_{\bm j}(\by) = \prod_{i = 1}^N f_{i, j_i}(y_i)$.

Then, $V_T$ is the span of those $f_{\bm j}$ with $j_i = 0$ for all $i \notin T$.
And, $\what{V}_T$ is the span of those $f_{\bm j}$ with $j_i = 0$ for all $i \not \in T$ and $j_i > 0$ for all $i \in T$ (i.e., those $\bm j$ whose support is exactly $T$).
The basis elements spanning the $\what{V}_T$ then partition the basis, which immediately gives their orthogonality and the decomposition $L^2(\QQ) = \bigoplus_{T \subseteq [N]} \what{V}_T$.

On the other hand, our formulation above makes it clear that this decomposition is independent of the choice of orthonormal bases, which is the entire benefit of working with coordinate degree---that we can do calculations without needing to understand the details of the underlying orthogonal polynomials or orthonormal functions.

\subsection{Coordinate decomposition of likelihood ratio}

Suppose now that we also have a further probability measure $\PP$ on $\Omega^N$.
We do not make the assumptions of an LVM, and rather allow $\PP$ to be arbitrary so long as it is absolutely continuous to $\QQ$ and has $d\PP / d\QQ \in L^2(\QQ)$ (as for a good LVM in the main text).
In the main text, we are interested in the decomposition over the subspaces $\what{V}_T$ of the likelihood ratio $L \colonequals d\PP / d\QQ$.

The projections of $L$ to $V_T$, per Proposition~\ref{prop:V}, are
\begin{equation}
    L_T \colonequals \Ex_{\by_{[N] \setminus T}} L(\by).
\end{equation}
One may check that this is the natural definition of the likelihood ratio between the \emph{marginal} law $\PP_T$ of $\PP$ on the coordinates of $T$ and the corresponding marginal law $\QQ_T = \bigotimes_{i \in T} \sQ_i$, since
\begin{equation}
    \Ex_{\by_T \sim \QQ_T} L_T(\by_T) f(\by_T) = \Ex_{\by \sim \QQ} L(\by) f(\by_T) = \Ex_{\by \sim \PP} f(\by_T).
\end{equation}
That is, abstractly one would use this to \emph{define} the marginal law $\PP_T$ by its Radon-Nikodym derivative to $\QQ_T$.
But, in simple cases of discrete measures or continuous measures with well-behaved densities, one may also check that $L_T$ is indeed the ratio of marginal probability mass or density functions, respectively.
We thus refer to the $L_T$ as \emph{marginal likelihood ratios}.

From the previous results we then immediately obtain the following.
\begin{lemma}
    \label{lem:lr-decomp}
    Define the LCDLR, $L_{\leq D} \colonequals P_{\leq D} L$, and
    \begin{equation}
        \what{L}_T \colonequals \sum_{S \subseteq T} (-1)^{|T| - |S|} L_S.
    \end{equation}
    Then, $\what{L}_T$ is the orthogonal projection of $L$ to $\what{V}_T$.
    In particular, the $\what{L}_T$ are orthogonal, and
    \begin{equation}
        \Ex_{\by \sim \QQ} L_{\leq D}(\by)^2 = \sum_{\substack{T \subseteq [N] \\ |T| \leq D}} \Ex_{\by \sim \QQ} \what{L}_T(\by)^2.
    \end{equation}
\end{lemma}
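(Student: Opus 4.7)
The plan is to observe that this lemma is essentially a direct consequence of Propositions~\ref{prop:V} and \ref{prop:Vhat}, applied to the specific function $L \in L^2(\QQ)$. The main conceptual step is to identify the marginal likelihood ratios $L_T$ with the images of $L$ under the averaging operators $\Avg_{[N] \setminus T}$, after which everything follows by linearity from the earlier abstract projection formulas.

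First I would verify that $L_T = \Avg_{[N] \setminus T} L$. This is immediate from Definition~\ref{def:Avg}: we have $(\Avg_{[N] \setminus T} L)(\by) = \EE_{\by_{[N] \setminus T}}[L(\by)] = L_T(\by)$. By Claim~4 of Proposition~\ref{prop:V}, this means $L_T$ is the orthogonal projection of $L$ onto $V_T$.

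Next I apply Claim~3 of Proposition~\ref{prop:Vhat}, which gives the orthogonal projection onto $\what{V}_T$ as
\begin{equation}
    \what{P}_T = \sum_{S \subseteq T} (-1)^{|T| - |S|} \Avg_{[N] \setminus S}.
\end{equation}
By linearity, the projection of $L$ to $\what{V}_T$ is therefore $\sum_{S \subseteq T} (-1)^{|T|-|S|} \Avg_{[N]\setminus S} L = \sum_{S \subseteq T} (-1)^{|T|-|S|} L_S = \what{L}_T$, establishing the first claim. The mutual orthogonality of the $\what{L}_T$ then follows immediately from Claim~1 of Proposition~\ref{prop:Vhat}, since each $\what{L}_T$ lives in the corresponding mutually orthogonal subspace $\what{V}_T$.

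For the final Parseval-type identity, I use Claim~2 of Proposition~\ref{prop:Vhat}, which gives $V_{\leq D} = \bigoplus_{|T| \leq D} \what{V}_T$ as an orthogonal direct sum. Consequently $P_{\leq D} = \sum_{|T| \leq D} \what{P}_T$, so $L_{\leq D} = \sum_{|T| \leq D} \what{L}_T$, and applying orthogonality coordinate-wise in $L^2(\QQ)$ yields
\begin{equation}
    \Ex_{\by \sim \QQ} L_{\leq D}(\by)^2 = \sum_{\substack{T \subseteq [N] \\ |T| \leq D}} \Ex_{\by \sim \QQ} \what{L}_T(\by)^2.
\end{equation}
There is no real obstacle here: all the work was done in setting up the Efron-Stein machinery of Appendix~\ref{sec:coord-general}. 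The only thing one must be careful about is confirming that $L \in L^2(\QQ)$ (so that the projections are well-defined), which is guaranteed by the standing assumption that $d\PP/d\QQ \in L^2(\QQ)$.
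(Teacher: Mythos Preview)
Your proposal is correct and matches the paper's approach exactly: the paper states that the lemma follows ``immediately'' from the previous results (Propositions~\ref{prop:V} and~\ref{prop:Vhat}), and you have simply spelled out how---identifying $L_T = \Avg_{[N]\setminus T} L$, applying the projection formula of Proposition~\ref{prop:Vhat} Claim~3, and invoking orthogonality via Claims~1 and~2. There is nothing to add.
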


\subsection{Additional remarks}
\label{sec:coord-remarks}

We point out a few more curious features and consequences of our observations that are less essential to our arguments in the main text.

\paragraph{Expanding the LCDLR}
Note that both the norm of the LCDLR and the LCDLR itself may be written completely in terms of the marginal likelihood ratios $L_T$, upon expanding in Lemma~\ref{lem:lr-decomp}.
For the LCDLR, we have either by expanding in the Lemma or by using the formula from Lemma~\ref{lem:PVD},
\begin{equation}
    L_{\leq D} = \sum_{\substack{T \subseteq [N] \\ |T| \leq D}} (-1)^{D - |T|} \binom{N - |T| - 1}{D - |T|} L_T. \label{eq:lcdlr-app}
\end{equation}
\noindent
We do not directly use the formula \eqref{eq:lcdlr-app}, but we include it to emphasize that the LCDLR takes a surprisingly explicit form in arbitrary models so long as $\QQ$ is a product measure.
We leave it as an interesting question to ascertain whether the $\what{L}_T$ themselves have a natural statistical meaning for hypothesis testing or whether \eqref{eq:lcdlr-app} can be operationalized into a useful algorithm.

For the norm, one may expand in Lemma~\ref{lem:lr-decomp} into inner products of the $L_S$, which have the following appealing structure reflecting the lattice structure of subsets of $[N]$, analogous to Claim~1 of Proposition~\ref{prop:V} that $V_S \cap V_T = V_{S \cap T}$.
\begin{proposition}
    $\Ex_{\by \sim \QQ} L_S(\by) L_T(\by) = \Ex_{\by \sim \QQ} L_{S \cap T}(\by)^2$.
\end{proposition}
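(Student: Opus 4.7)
The plan is to interpret the identity in terms of the averaging (conditional expectation) operators of Definition~\ref{def:Avg} and apply the commutation/idempotence properties established in Proposition~\ref{prop:V}. Recall that $L_T = \Avg_{[N] \setminus T} L$, that $\Avg_{[N] \setminus T}$ is the orthogonal projection in $L^2(\QQ)$ onto $V_T$ (hence self-adjoint and idempotent), and that the $\Avg_i$ commute.

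First, I would rewrite the left-hand side as an $L^2(\QQ)$ inner product and move projections across:
\begin{equation*}
\Ex_{\by \sim \QQ} L_S(\by) L_T(\by)
= \langle \Avg_{[N] \setminus S} L, \, \Avg_{[N] \setminus T} L \rangle_{L^2(\QQ)}
= \langle L, \, \Avg_{[N] \setminus S}\,\Avg_{[N] \setminus T} L \rangle_{L^2(\QQ)},
\end{equation*}
using self-adjointness of the projection $\Avg_{[N] \setminus S}$. Since the $\Avg_i$ commute and are idempotent, the composition $\Avg_{[N] \setminus S}\,\Avg_{[N] \setminus T}$ equals $\Avg_{([N] \setminus S)\cup([N] \setminus T)} = \Avg_{[N] \setminus (S \cap T)}$, which is the projection onto $V_{S \cap T}$ and applied to $L$ yields $L_{S \cap T}$. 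Applying self-adjointness and idempotence one more time gives
\begin{equation*}
\langle L, L_{S\cap T}\rangle = \langle L, \Avg_{[N]\setminus(S\cap T)} L\rangle = \langle \Avg_{[N]\setminus(S\cap T)} L, \Avg_{[N]\setminus(S\cap T)} L\rangle = \Ex_{\by \sim \QQ} L_{S\cap T}(\by)^2,
\end{equation*}
which is the claim.

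As a sanity check (and an alternative route if one prefers not to invoke the operator calculus), one can verify the identity directly using only that $\QQ$ is a product measure: the coordinates $\by_{S \cap T}$, $\by_{S \setminus T}$, and $\by_{T \setminus S}$ are independent, so Fubini gives
\begin{equation*}
\Ex_{\by\sim\QQ} L_S L_T = \Ex_{\by_{S\cap T}} \bigl[\Ex_{\by_{S\setminus T}} L_S\bigr]\bigl[\Ex_{\by_{T\setminus S}} L_T\bigr],
\end{equation*}
and each of the inner expectations is $L_{S \cap T}(\by_{S \cap T})$ by iterated conditional expectation (integrating further coordinates out of a marginal likelihood ratio gives the likelihood ratio for the smaller index set).

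There is no real obstacle here; the only step that needs a moment of care is the identity $\Avg_{[N]\setminus S}\,\Avg_{[N]\setminus T} = \Avg_{[N]\setminus(S\cap T)}$, which follows immediately from commutation and idempotence of the coordinatewise averaging operators (Claims~2 and~3 of Proposition~\ref{prop:V}). This is exactly the ``projection version'' of the set-theoretic identity $V_S \cap V_T = V_{S \cap T}$ noted in Claim~1 of Proposition~\ref{prop:V}, so the proposition is properly viewed as the lattice statement $V_S \cap V_T = V_{S\cap T}$ lifted to the likelihood ratio.
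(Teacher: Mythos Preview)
Your proof is correct. The paper's primary argument goes through the Efron--Stein decomposition $L_T = \sum_{R \subseteq T}\what{L}_R$ with the $\what{L}_R$ mutually orthogonal (Proposition~\ref{prop:Vhat}), so that $\langle L_S, L_T\rangle = \sum_{R \subseteq S \cap T}\|\what{L}_R\|^2 = \|L_{S\cap T}\|^2$; the paper also mentions, without spelling it out, exactly the alternative you implement via commuting projections and $V_S \cap V_T = V_{S\cap T}$. Your route is arguably cleaner in that it avoids ever introducing the $\what{V}_T$ subspaces and relies only on the elementary commutation and idempotence of the $\Avg_i$ from Proposition~\ref{prop:V}, while the paper's primary route has the advantage of making transparent which orthogonal components contribute to the inner product.
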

\noindent
This follows from the decomposition, by Proposition~\ref{prop:Vhat}, $L_T = \sum_{R \subseteq T} \what{L}_R$, where the $\what{L}_R$ are mutually orthogonal.
Alternatively, it may be viewed as a consequence of $V_S \cap V_T = V_{S \cap T}$ and the orthogonal meeting of these subspaces from Proposition~\ref{prop:V}.

\paragraph{Connections with $\chi^2$ divergence}
As a special case of Lemma~\ref{lem:lr-decomp} on the norm of the LCDLR, we may consider the norm of $L$ itself, which is related to the $\chi^2$ divergence between $\PP$ and $\QQ$.
The Lemma thus gives an interesting decomposition of this divergence.
\begin{definition}
    \label{def:chi-squared}
    The \emph{$\chi^2$ divergence} between $\PP$ and $\QQ$ is
    \begin{equation}
        \chi^2(\PP \dbar \QQ) \colonequals \Ex_{\by \sim \QQ}L(\by)^2 - 1 = \Ex_{\by \sim \QQ}\left(L(\by) - 1\right)^2.
        \label{eq:chi-squared}
    \end{equation}
\end{definition}

\begin{corollary}
    \label{cor:chi-squared}
    For any $\PP$ and $\QQ = \sQ_1 \otimes \cdots \otimes \sQ_N$,
    \begin{equation}
        \chi^2(\PP \dbar \QQ) = \sum_{\substack{T \subseteq [N] \\ T \neq \emptyset}} \Ex_{\by \sim \QQ}\left(\sum_{S \subseteq T} (-1)^{|T| - |S|}L_S(\by)\right)^2.
    \end{equation}
\end{corollary}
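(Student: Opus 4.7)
The plan is to apply Lemma~\ref{lem:lr-decomp} at the maximal degree $D = N$ and then subtract off the contribution of the empty set. Since $V_{\leq N} = V_{[N]} = L^2(\QQ)$, the orthogonal projection $P_{\leq N}$ is the identity on $L^2(\QQ)$, and consequently $L_{\leq N} = L$. By Lemma~\ref{lem:lr-decomp} applied with $D = N$, we therefore obtain
\begin{equation*}
    \Ex_{\by \sim \QQ} L(\by)^2 = \sum_{T \subseteq [N]} \Ex_{\by \sim \QQ} \what{L}_T(\by)^2,
\end{equation*}
and by the formula for $\what{L}_T$ given in the lemma, each summand is exactly the expression appearing on the right-hand side of the claimed identity.

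Next, I would isolate the $T = \emptyset$ term. By definition $L_\emptyset$ is the marginal likelihood ratio associated to the trivial marginals $\PP_\emptyset$ and $\QQ_\emptyset$, both of which are the unique probability measure on the one-point space $\Omega^{\emptyset}$; equivalently, $L_\emptyset = \Ex_{\by \sim \QQ} L(\by) = 1$. Hence $\what{L}_\emptyset = L_\emptyset = 1$ and its squared $L^2(\QQ)$-norm equals $1$.

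Combining these two observations and invoking the definition of the $\chi^2$ divergence in \eqref{eq:chi-squared},
\begin{equation*}
    \chi^2(\PP \dbar \QQ) = \Ex_{\by \sim \QQ} L(\by)^2 - 1 = \sum_{\substack{T \subseteq [N] \\ T \neq \emptyset}} \Ex_{\by \sim \QQ} \what{L}_T(\by)^2,
\end{equation*}
which is exactly the claim after substituting the explicit formula $\what{L}_T = \sum_{S \subseteq T} (-1)^{|T| - |S|} L_S$. There is no real obstacle here: the result is a direct corollary of Lemma~\ref{lem:lr-decomp}, with the only subtlety being the bookkeeping identity $\what{L}_\emptyset = 1$, whose verification is immediate from the fact that marginal likelihood ratios integrate to one against $\QQ$.
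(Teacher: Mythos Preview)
Your proof is correct and follows essentially the same approach as the paper: both invoke Lemma~\ref{lem:lr-decomp} (at $D = N$), the definition of the $\chi^2$ divergence, and the observation that $L_{\emptyset} = 1$. Your version simply makes explicit the bookkeeping that the paper leaves to the reader.
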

\begin{proof}
    The result follows from the definition of $\chi^2$ divergence, Lemma~\ref{lem:lr-decomp}, and the observation that $L_{\emptyset} = 1$.
\end{proof}
\noindent
In fact, the same kind of identity also holds for an arbitrary partition of $[N]$, since we may group the $y_1, \dots, y_N$ into various subsets of ``effective coordinates'' of $\by$ and repeat the same analysis.
Actually, the definition \eqref{eq:chi-squared} itself is just the special case of these formulas when we view $\by$ as having just one coordinate, in which case there is just one term in the summation above that is equal to $\EE_{\by \sim \QQ}(L_{\{1\}}(\by) - L_{\emptyset}(\by))^2 = \EE(L(\by) - 1)^2 = \EE L(\by)^2 - 1$.
Generally, when computing a $\chi^2$ divergence, if any subsets of coordinates of $\QQ$ are independent, then we may apply such a ``coarsening'' operation and apply the identity to obtain an interesting decomposition.

In a similar spirit, it is tempting to bound the components of the norm of the LCDLR by $\chi^2$ divergences,
\begin{equation}
    \Ex_{\by \sim \QQ} \what{L}_T(\by)^2 \leq \Ex_{\by \sim \QQ} (L_T(\by) - 1)^2 = \chi^2(\PP_T \dbar \QQ_T).
\end{equation}
This is valid for all $T \neq \emptyset$, since one may view $\what{L}_T$ as $L_T$ orthogonalized against all $\what{L}_S$ with $S \subsetneq T$, while $L_T - 1$ is $L_T$ orthogonalized against just $\what{L}_{\emptyset} = 1$.
We could thus bound the norm of the LCDLR by a sum of marginal $\chi^2$ divergences,
\begin{equation}
    \Ex_{\by \sim \QQ} L_{\leq D}(\by)^2 \leq 1 + \sum_{\substack{T \subseteq [N] \\ |T| \leq D}} \chi^2(\PP_T \dbar \QQ_T),
\end{equation}
and in fact one may bound more carefully by a sum over just the $|T| = D$ as well.
Unfortunately, while conceptually appealing, this bound may be verified to be too loose even in, e.g., the spiked matrix models treated in the main text.
The issue is that, upon expanding these divergences as sums of norms of the $\what{L}_S$ per Corollary~\ref{cor:chi-squared}, we find that the norms of certain $\what{L}_S$, say those with $|S| = D / 2$, are dramatically overcounted in this bound.

\paragraph{Beyond product measures?}
Finally, we emphasize that, unfortunately, our approach no longer applies once $\QQ$ is not a product measure (or, less restrictively per the above, admits no non-trivial partition into subsets of independent indices).
The reason for this is subtle; it is, after all, still true that $\Avg_{[N] \setminus T}$ (with the interpretation as a conditional expectation) is an orthogonal projection to $V_T$; the proof of this in Proposition~\ref{prop:V} still holds.
The problem is that $\Avg_S$ no longer factorizes in terms of the $\Avg_i$, nor do these projections commute, and therefore nor do the $V_T$ meet orthogonally.
Indeed, consider the case $N = 2$ where $y_1$ and $y_2$ are correlated.
Intuitively speaking, $\Avg_2 y_2 \in V_{\{1\}}$ is the ``best prediction'' of $y_2$ as a function of $y_1$.
$\Avg_1\Avg_2 y_2 \in V_{\{2\}}$ is then the best prediction of this function of $y_1$ as a function of $y_2$---but in the presence of a non-trivial correlation, this is not a constant, while $\Avg_{\{1, 2\}} y_2 = \EE y_2$ is.

On the other hand, the right-hand side of \eqref{eq:lcdlr-app} is still sensible in arbitrary models, as a kind of ``mean-field approximation'' of the LCDLR, in the sense of computing what the LCDLR would be if $\QQ$ were a product measure.
We offer the intriguing question of whether this quantity may be useful for efficient hypothesis testing even without the product structure of $\QQ$.

\section{Necessity of conditions for universality}
\label{sec:non-universality}

We give some elaboration on the conditions that are necessary for universality and when universality of computational thresholds for strong detection and other tasks should break down.

The following is a very simple example showing that \emph{some} control of the prior is necessary for universality of the coordinate advantage to hold.

\begin{example}[Non-universality]
    \label{ex:non-universality}
    Consider the prior $\sX = \Unif(\{\pm \frac{1}{2}\}^N)$ observed through the Bernoulli and Gaussian channels.
    Through $\sP_x = \Ber(\frac{1}{2} + x)$, when $x \in \{\pm \frac{1}{2}\}$ we have $\sP_x = \delta_x$, so the observation is just $\PP = \Ber(\frac{1}{2})^{\otimes N} = \QQ$.
    Therefore,
    \begin{equation}
        \CAdv_{\leq D}\left(\sX, \Ber\left(\frac{1}{2} + x\right)\right) = 1
    \end{equation}
    for all $D \geq 0$.
    On the other hand, through $\sP_x = \sN(x, 1)$, for any $D \geq 2$,
    \begin{align*}
      \CAdv_{\leq D}(\sX, \sN(x, 1))
      &\geq \Adv_{\leq D}(\sX, \sN(x, 1)) \\
      &= \Ex_{\bx^{(1)}, \bx^{(2)} \sim \sX} \exp^{\leq D}(\langle \bx^{(1)}, \bx^{(2)} \rangle) \\
      &\geq \EE \frac{1}{2}\langle \bx^{(1)}, \bx^{(2)}\rangle^2 \\
      &= \frac{1}{8}N, \numberthis
    \end{align*}
    which diverges as $N \to \infty$ (in the inequality we appeal to Lemma~\ref{lem:non-neg}).
\end{example}
\noindent
This example illustrates that some assumption on the size of the prior and its entries is necessary, and one may check that this example fails Assumption P2 of Theorem~\ref{thm:channel-universality}.
Generally, our brand of universality holds when the dependence on the channel $\sP_x$ is only on its \emph{local} behavior near $x = 0$; priors whose entries are typically very far from zero like this one can bring out the \emph{global} differences of channels.
Here, the difference is stark: the Bernoulli channel for a certain magnitude of signal ``tells the truth'' and outputs the signal with no noise at all, while the Gaussian channel always applies some non-trivial noise to the signal.

\begin{remark}[Detection versus recovery]
    As an aside, this example also happens to showcase the subtle relationship between detection (hypothesis testing) and recovery (estimation) of $\bx$.
    The above shows that detection is much easier through the Gaussian channel than through the Bernoulli channel.
    Conversely, perfect recovery of $\bx$ from $\by \sim \PP$ is trivially possible through the Bernoulli channel, since $\by = \bx$.
    On the other hand, through the Gaussian channel we only make noisy observations of $\bx \in \{\pm \frac{1}{2}\}^N$, and a simple argument shows that it is only possible to estimate a constant fraction of the $x_i$ correctly with high probability.
\end{remark}

\paragraph{Sparse PCA}
On the other hand, there are situations that Theorem~\ref{thm:channel-universality} does not cover where we still expect universality to hold.
One is \emph{sparse PCA}.
As a specific example, studied for LDP by \cite{DKWB-2019-SubexponentialTimeSparsePCA}, consider the spiked matrix model with the sparse Rademacher prior from Section~\ref{sec:app}, but with decaying sparsity $s = s(n) = o(1)$.
Recall that the prior is the law of $\bX = \frac{\lambda}{\sqrt{n}}\bx\bx^{\top}$ where $\bx$ has $n$ i.i.d.\ entries drawn from the measure
\begin{equation}
    \pi = (1 - s)\delta_0 + \frac{s}{2}\delta_{1 / \sqrt{s}} + \frac{s}{2}\delta_{-1 / \sqrt{s}},
\end{equation}
and the total dimension is $N = n^2$.
(Here $\lambda$ is still a constant not depending on $n$; we are also ignoring the symmetry of the observed matrices for the sake of simplicity, which does not make a material difference.)
In particular, with high probability we have that the norms of the vectorization of the signal are
\begin{equation}
    \|\mathsf{vec}(\bX)\|_k \asymp n^{-1/2} \left((sn)^2 \cdot s^{-k}\right)^{\frac{1}{k}} = s^{\frac{2}{k} - 1} n^{\frac{2}{k} - \frac{1}{2}} = s^{\frac{2}{k} - 1} N^{\frac{1}{k} - \frac{1}{4}}.
\end{equation}
Thus, Assumptions P2 and P3 (for Theorem~\ref{thm:channel-universality}) concerning $k > 2$ are not satisfied once $s = o(1)$.
However, looking more deeply into the proof of Theorem`\ref{thm:channel-universality}, we see that the true source of non-universality would be for the quantity $\sum_{i, j = 1}^n (X^{(1)}_{ij})^2 (X^{(2)}_{ij})^2$ to be unbounded for typical independent draws $\bX^{(1)}, \bX^{(2)}$ from the prior.
In the proof, we bound this by the Cauchy-Schwarz inequality, which does not respect the sparsity structure in the present setting.
Indeed, we expect to have with high probability
\begin{equation}
    \sum_{i, j = 1}^n (X^{(1)}_{ij})^2 (X^{(2)}_{ij})^2 \asymp \frac{1}{n^2} \left(\sum_{i = 1}^n (x_i^{(1)})^2(x_i^{(2)})^2\right)^2 \asymp \frac{1}{n^2} \left(s^2 n \cdot \frac{1}{s^2}\right)^2 = 1.
\end{equation}
The result of Theorem~\ref{thm:channel-universality} should therefore likely hold for sparse PCA as well, but would require a more tailored proof treating sparsity more carefully.

\paragraph{Constants and weak detection rates}
Our proof of Theorem~\ref{thm:channel-universality} makes it apparent that the constants $C_1, C_2, C_3$ are generally necessary and depend on the channel $\sP$ in a way not captured by the Fisher information.
(Indeed, they arise precisely from the error term discussed above, which appears in the fourth order terms of the Taylor expansion of $R_{\sP}$, while the Fisher information only governs the second order term.)
In particular, these constants cannot in general be replaced by $1 + o(1)$.
As discussed in, e.g., \cite{BAHSWZ-2022-FranzParisiLowDegree,RSWY-2022-CountCommunitiesLowDegree}, showing that the coordinate advantage is $1 + o(1)$ may be viewed as evidence that \emph{weak detection}---distinguishing $\PP_n$ and $\QQ_n$ with some probability strictly greater than $\frac{1}{2}$---is computationally hard.
Thus we expect that thresholds for weak detection should not enjoy the channel universality of thresholds for strong detection.
The remarkable recent work of \cite{MW-2023-PreciseErrorRatesPCA} made this connection more precise, showing that the numerical value of the advantage is related (at least in a spiked matrix model) to the best receiver operating characteristic (ROC) curve, or tradeoff between Type~I and Type~II errors, achievable by efficient testing algorithms.
We therefore expect this optimal tradeoff, too, to be non-universal.
We propose the verification of this as an interesting open problem.

\section{Omitted proofs}
\subsection{Assumptions for additive channels: Proof of Proposition~\ref{prop:channel-additive}}
\label{sec:pf:prop:channel-additive}

Recall that we are considering a channel with $\sP_x$ the law of $x + z$ for $z \sim \rho$, where $\rho$ has a density $p(y)$ on $\RR$.
This means that $\sP_x$ has a density $p(y - x)$, and the channel likelihood ratios are
\begin{equation}
    L_x(y) = \frac{d\sP_x}{d\sP_0} = \frac{p(y - x)}{p(y)},
\end{equation}
which is always defined since we have assumed that $p(y) > 0$ for all $y$.
The channel overlap is then
\begin{equation}
    R_{\sP}(x^{(1)}, x^{(2)}) = -1 + \Ex_{y \sim \rho} L_{x^{(1)}}(y) L_{x^{(2)}}(y) = -1 + \int_{-\infty}^{\infty} \frac{p(y - x^{(1)})p(y - x^{(2)})}{p(y)} dy.
\end{equation}
This is, up to the $-1$ term and a logarithm, equivalent to the \emph{translation function} considered by \cite{PWBM-2018-PCAI} in the special context of PCA.
To reiterate an example mentioned there:
\begin{example}
    When $\rho = \sN(0, 1)$, then $K(x^{(1)}, x^{(2)}) = \exp(x^{(1)}x^{(2)}) - 1$.
\end{example}

\begin{proof}[Proof of Proposition~\ref{prop:channel-additive}]
It suffices to verify that the assumptions of Theorem~\ref{thm:channel-universality} hold.
Since we assume $p(y) > 0$ and $p$ is $\sC^4$ on all of $\RR$, $L_x(y)$ is also $\sC^4$ in $x$ on all of $\RR$, so Assumption C1 of Theorem~\ref{thm:channel-universality} is satisfied.
For Assumption C2, we may compute
\begin{equation}
    \frac{\partial^3 R_{\sP}}{\partial x^{(1)^2} \partial x^{(2)}}(0, 0) = -\int_{-\infty}^{\infty} \frac{p^{\prime\prime}(y)p^{\prime}(y)}{p(y)} dy = 0,
\end{equation}
since $p(-y) = p(y)$, so $p^{\prime\prime}(-y) = p^{\prime\prime}(y)$ while $p^{\prime}(-y) = -p^{\prime}(y)$, whereby the integrand is an odd function.
The Fisher information is computed similarly:
\begin{equation}
    F_{\sP} = \frac{\partial^2 R_{\sP}}{\partial x^{(1)} \partial x^{(2)}}(0, 0) = \int_{-\infty}^{\infty} \frac{p^{\prime}(y)^2}{p(y)}dy,
\end{equation}
as claimed.
\end{proof}

\begin{remark}
    The role the translation function plays in our argument is somewhat different from that in \cite{PWBM-2018-PCAI}.
    The reason for this is that their calculations (which concern the full $\chi^2$ divergence or norm of the likelihood ratio) may be viewed as taking $D = N$ in the formula (not the bound) of Theorem~\ref{thm:lvm}, factorizing the resulting expression into a product, and writing this as an exponential of a sum.
    More specifically, abbreviating $R_i \colonequals R_{\sP}(x_i^{(1)}, x_i^{(2)})$,
    \begin{align*}
      \CAdv_{\leq N}(\sX, \sP)^2
      &= \Ex_{\bx^{(1)}, \bx^{(2)}\sim \sX} \sum_{T \subseteq [N]} \prod_{i \in T} R_{i} \\
      &= \Ex_{\bx^{(1)}, \bx^{(2)}} \prod_{i = 1}^N(1 + R_i) \\
      &= \Ex_{\bx^{(1)}, \bx^{(2)}} \exp\left(\sum_{i = 1}^N \log(1 + R_i)\right), \numberthis
    \end{align*}
    and this is the source of the extra logarithm in the translation function they work with.
    We obtain a bound that instead has $\sum_{i = 1}^N R_i$ in the exponential, which is larger, but our approach has the benefit of being easily adaptable to low coordinate degree truncations.
\end{remark}

\subsection{Assumptions for exponential family channels: Proof of Proposition~\ref{prop:channel-exponential}}
\label{sec:pf:prop:channel-exponential}

Let us first review some basic definitions and properties of exponential families that we omitted in the presentation of the main results in the Introduction.
Our discussion is similar to that of the same topics in \cite{Kunisky-2020-LowDegreeMorris}.

\begin{definition}
    \label{def:exp-family}
    Let $\widetilde{\sP}_0$ be a probability measure over $\RR$ which is not a single atom.
    Let $\psi(\theta) \colonequals \log \EE_{x \sim \widetilde{\PP}_0}[\exp(\theta x) ]$ and $\Theta \colonequals \{\theta \in \RR: \psi(\theta) < \infty\}$.
    Then, the \emph{natural exponential family (NEF) generated by $\widetilde{\sP}_0$} is the family of probability measures $\rho_{\theta}$, for $\theta \in \Theta$, given by the relative densities
    \begin{equation}
        \frac{d\widetilde{\sP}_\theta}{d\widetilde{\sP}_0}(y) \colonequals \exp(\theta y - \psi(\theta)).
    \end{equation}
\end{definition}

$\psi(\theta)$ is the cumulant generating function of $\widetilde{\sP}_0$.
The cumulant generating functions of the $\widetilde{\sP}_{\theta}$ are translations of $\psi$, and the means and variances of $\widetilde{\sP}_{\theta}$ are thus given by derivatives of $\psi$ evaluated away from zero,
\begin{align}
  \mu_\theta &\colonequals \EE_{y \sim \widetilde{\sP}_\theta}[y] = \psi^\prime(\theta), \\
  \sigma_\theta^2 &\colonequals \Var_{y \sim \widetilde{\sP}_\theta}[y] = \psi^{\prime\prime}(\theta).
\end{align}
For compatibility with the statement of Proposition~\ref{prop:channel-exponential}, we also write
\begin{align}
  \mu &\colonequals \mu_0, \\
  \sigma^2 & \colonequals \sigma^2_0.
\end{align}

Since $\widetilde{\sP}_0$ is not an atom, neither is any $\widetilde{\sP}_{\theta}$, so $\psi^{\prime\prime}(\theta) = \sigma_{\theta}^2 > 0$.
Therefore, $\psi^\prime$ is strictly increasing, and thus one-to-one.
Letting $\Sigma \colonequals \psi^{\prime}(\RR) - \mu \subseteq \RR$, which one may verify is always some open interval, possibly infinite on either side, of $\RR$, we see that $\widetilde{\sP}_\theta$ admits an alternative parametrization by the mean, which we may also translate by $\mu \colonequals \mu_0$ to obtain the setting specified in Proposition~\ref{prop:channel-exponential},
\begin{align}
  \theta(x) &\colonequals \psi^{\prime^{-1}}(\mu + x), \\
  \sP_{x} &\colonequals \widetilde{\sP}_{\theta(x)}.
\end{align}
Note that $\theta(0) = 0$, so $\sP_{0} = \widetilde{\sP}_0$.

The channel likelihood ratios in this parametrization are then
\begin{equation}
    L_x(y) = \frac{d\sP_x}{d\sP_0}(y) = \frac{d\widetilde{\PP}_{\theta(x)}}{d\widetilde{\PP}_0}(y) = \exp\big(\theta(x) y - \psi(\theta(x))\big),
\end{equation}
and the channel overlap is
\begin{align*}
  R_{\sP}(x^{(1)}, x^{(2)})
  &= \Ex_{y \sim \sP_0} L_{x^{(1)}}(y) L_{x^{(2)}}(y) - 1 \\
  &= \exp(-\psi(\theta(x^{(1)})) - \psi(\theta(x^{(2)}))) \Ex_{y \sim \sP_0} \exp(y(\theta(x^{(1)}) + \theta(x^2))) - 1 \\
  &= \exp\bigg(\psi\big(\theta(x^{(1)}) + \theta(x^{(2)})\big) - \psi(\theta(x^{(1)})) - \psi(\theta(x^{(2)}))\bigg) - 1.
\end{align*}
It will also be useful to define
\begin{equation}
    K_{\sP}(x^{(1)}, x^{(2)}) \colonequals \exp\bigg(\psi\big(\theta(x^{(1)}) + \theta(x^{(2)})\big) - \psi(\theta(x^{(1)})) - \psi(\theta(x^{(2)}))\bigg) = R_{\sP}(x^{(1)}, x^{(2)}) + 1.
\end{equation}

\begin{proof}[Proof of Proposition~\ref{prop:channel-exponential}]
    Again we verify the conditions of Theorem~\ref{thm:channel-universality}, which requires taking the first three partial derivatives of $R_{\sP}$.
    For the first derivative:
    \begin{equation}
        \frac{\partial R_{\sP}}{\partial x^{(1)}}(x^{(1)}, x^{(2)}) = K_{\sP}(x^{(1)}, x^{(2)}) \theta^{\prime}(x^{(1)}) \left(\psi^{\prime}(\theta(x^{(1)}) + \theta(x^{(2)})) - \psi^{\prime}(\theta(x^{(1)}))\right).
    \end{equation}
    For the mixed second partial derivative:
    \begin{align*}
      &\frac{\partial^2 R_{\sP}}{\partial x^{(1)}\partial x^{(2)}}(x^{(1)}, x^{(2)}) = K_{\sP}(x^{(1)}, x^{(2)}) \theta^{\prime}(x^{(1)})\theta^{\prime}(x^{(2)}) \\
      &\hspace{1cm} \bigg(\left(\psi^{\prime}(\theta(x^{(1)}) + \theta(x^{(2)})) - \psi^{\prime}(\theta(x^{(1)}))\right)\left(\psi^{\prime}(\theta(x^{(1)}) + \theta(x^{(2)})) - \psi^{\prime}(\theta(x^{(2)}))\right) \\
      &\hspace{1.5cm} + \psi^{\prime\prime}(\theta(x^{(1)}) + \theta(x^{(2)}))\bigg). \numberthis
    \end{align*}
    Finally, for the mixed third partial derivative:
    \begin{align*}
      &\frac{\partial^3 R_{\sP}}{\partial x^{(1)^2}\partial x^{(2)}}(x^{(1)}, x^{(2)}) = K_{\sP}(x^{(1)}, x^{(2)}) \theta^{\prime}(x^{(2)}) \\
      &\hspace{1cm} \bigg(\theta^{\prime}(x^{(1)})^2 \bigg(\left(\psi^{\prime}(\theta(x^{(1)}) + \theta(x^{(2)})) - \psi^{\prime}(\theta(x^{(1)}))\right)\left(\psi^{\prime}(\theta(x^{(1)}) + \theta(x^{(2)})) - \psi^{\prime}(\theta(x^{(2)}))\right) \\
      &\hspace{3cm} + \psi^{\prime\prime}(\theta(x^{(1)}) + \theta(x^{(2)}))\bigg)\left(\psi^{\prime}(\theta(x^{(1)}) + \theta(x^{(2)})) - \psi^{\prime}(\theta(x^{(1)}))\right) \\
      &\hspace{1cm} + \theta^{\prime\prime}(x^{(1)}) \bigg(\left(\psi^{\prime}(\theta(x^{(1)}) + \theta(x^{(2)})) - \psi^{\prime}(\theta(x^{(1)}))\right)\left(\psi^{\prime}(\theta(x^{(1)}) + \theta(x^{(2)})) - \psi^{\prime}(\theta(x^{(2)}))\right) \\
      &\hspace{3cm} + \psi^{\prime\prime}(\theta(x^{(1)}) + \theta(x^{(2)}))\bigg) \\
  &\hspace{1cm} + \theta^{\prime}(x^{(1)})^2\bigg(\left(\psi^{\prime\prime}(\theta(x^{(1)}) + \theta(x^{(2)})) - \psi^{\prime\prime}(\theta(x^{(1)}))\right)\left(\psi^{\prime}(\theta(x^{(1)}) + \theta(x^{(2)})) - \psi^{\prime}(\theta(x^{(2)}))\right) \\
  &\hspace{3cm} + \left(\psi^{\prime}(\theta(x^{(1)}) + \theta(x^{(2)})) - \psi^{\prime}(\theta(x^{(1)}))\right)\left(\psi^{\prime\prime}(\theta(x^{(1)}) + \theta(x^{(2)}))\right) \\
  &\hspace{3cm} + \psi^{\prime\prime\prime}(\theta(x^{(1)}) + \theta(x^{(2)}))\bigg)\bigg). \numberthis
    \end{align*}
    The derivatives of $\theta(x)$ appearing in these formulas are
\begin{align}
  \theta^{\prime}(x) &= \frac{1}{\psi^{\prime\prime}(\theta(x))}, \\
  \theta^{\prime\prime}(x) &= -\frac{\psi^{\prime\prime\prime}(\theta(x))}{\psi^{\prime\prime}(\theta(x))^2} \theta^{\prime}(x) = -\frac{\psi^{\prime\prime\prime}(\theta(x))}{\psi^{\prime\prime}(\theta(x))^3}.
\end{align}
We then have
\begin{align}
  \theta(0) &= 0, \\
  \theta^{\prime}(0) &= \frac{1}{\sigma^2}, \\
  \theta^{\prime\prime}(0) &= - \frac{\psi^{\prime\prime\prime}(0)}{\sigma^6}.
\end{align}
Further, when we evaluate the derivatives of $R_{\sP}$ at $x^{(1)} = x^{(2)} = 0$, we will have many cancellations because, for $i \in \{1, 2\}$,
\begin{equation}
    \psi^{\prime}(\theta(x^{(1)}) + \theta(x^{(2)})) - \psi^{\prime}(\theta(x^{(i)})) = \psi^{\prime}(0) - \psi^{\prime}(0) = 0.
\end{equation}
In particular, for the Fisher information we find
\begin{equation}
    \frac{\partial^2 R_{\sP}}{\partial x^{(1)}\partial x^{(2)}}(0, 0) = \frac{1}{\sigma^2} \cdot \frac{1}{\sigma^2} \cdot (0 + \sigma^2) = \frac{1}{\sigma^2},
\end{equation}
as claimed, while for the third mixed derivatives we find
\begin{equation}
    \frac{\partial^3 R_{\sP}}{\partial x^{(1)^2}\partial x^{(2)}}(0, 0) = \frac{1}{\sigma^2} \cdot \left(0 - \frac{\psi^{\prime\prime\prime}(0)}{\sigma^6} \cdot (0 + \sigma^2) + \frac{1}{\sigma^4}\psi^{\prime\prime\prime}(0)\right) = 0,
\end{equation}
completing the proof.
\end{proof}

\subsection{Truncated exponentials: Proof of Proposition~\ref{prop:trunc-exp}}
\label{app:prop:trunc-exp}

We will use throughout the integral formula
    \begin{equation}
        \exp^{\leq D}(x) = \frac{1}{D!}\int_0^{\infty} \exp(-s) \left(x + s\right)^D ds,
        \label{eq:expD-int}
    \end{equation}
    which may be found in \cite{DCS-2003-TruncatedPolynomials}.

\begin{proof}[Proof of Proposition~\ref{prop:trunc-exp}]
    The first inequality of the first claim, $\exp^{\leq D}(x) > 0$, follows directly from this since $D$ is even, and the second inequality of the first claim, $\exp^{\leq D}(x) \leq \exp(|x|)$, follows by the triangle inequality and Taylor expansion.
    For the second claim, note that the left-hand inequality follows from the right-hand one: if the right-hand inequality holds, we may also bound
    \begin{equation}
        \exp^{\leq D}(x) = \exp^{\leq D}(x + y - y) \leq \exp^{\leq D}(x + y) \exp(100|y|),
    \end{equation}
    and rearranging gives the left-hand inequality.

    To prove the remaining right-hand inequality, we rewrite
    \begin{align*}
      \log \frac{\exp^{\leq D}(x + y)}{\exp^{\leq D}(x)}
      &= \log \exp^{\leq D}(x + y) - \log \exp^{\leq D}(x) \\
      &= \int_{x}^{x + y} \frac{d}{dt}\log \exp^{\leq D}(t) \, dt \\
      &= \int_x^{x + y} \frac{\exp^{\leq D - 1}(t)}{\exp^{\leq D}(t)}dt \\
      &= \int_x^{x + y} \left(1 - \frac{\frac{t^D}{D!}}{\exp^{\leq D}(t)}\right)dt \\
      &= y - \int_x^{x + y} \frac{t^D}{\int_0^{\infty} \exp(-s)(t + s)^Dds}dt \numberthis
    \end{align*}
    We will show that the quantities
    \begin{equation}
        I_D(t) \colonequals \frac{t^D}{\int_0^{\infty} \exp(-s)(t + s)^Dds}dt
    \end{equation}
    inside the remaining integral are uniformly bounded above for all $t \in \RR$ and $D \geq 0$ even.
    Fix some $t_0$ to be chosen later.
    We first note that $I_D(0) = 1$, and for $t > 0$ we have $I_D(t) \leq 1 / (\int_0^{\infty} \exp(-s)ds) = 1$.
    So, we may restrict our attention to $t < 0$.
    Let us write this as $I_D(-t)$ for $t > 0$.
    We have
    \begin{equation}
        \frac{1}{I_D(-t)} = \int_0^{\infty} \exp(-s)\left(1 - \frac{s}{t}\right)^Dds,
    \end{equation}
    and it suffices to bound this quantity from below.
    Set a further constant $C > 0$ to be fixed later.
    We consider three cases.

    \emph{Case 1, $t \leq t_0$:} We may bound
    \begin{align*}
      \frac{1}{I_D(-t)}
      &\geq \int_{2t}^{\infty} \exp(-s)\left(1 - \frac{s}{t}\right)^Dds \\
      &\geq \int_{2t}^{\infty} \exp(-s)ds \\
      &= 1 - \exp(-2t) \\
      &\geq 1 - \exp(-2t_0). \numberthis
    \end{align*}

    \emph{Case 2, $t > t_0$ and $D \leq Ct$:} We may bound using that $(1 - r)^D \geq 1 - Dr$,
    \begin{align*}
      \frac{1}{I_D(-t)}
      &\geq \int_0^{t/D} \exp(-s) \left(1 - \frac{Ds}{t}\right)ds \\
      &= \int_0^{t/D} \exp(-s)ds - \frac{D}{t}\int_0^{t/D} s\exp(-s)ds \\
      &= 1 - \exp\left(-\frac{t}{D}\right) - \frac{D}{t}\left(1 - \exp\left(-\frac{t}{D}\right)\left(\frac{t}{D} + 1\right)\right)
        \intertext{This is an increasing function of $t / D$, on $t / D \geq 0$, increasing from 0 to 1 as $t / D \to \infty$.
        Thus, since under the assumption of this case $t/D \geq 1/C$, our expression will be bounded below as}
      &\geq 1 - \exp\left(\frac{1}{C}\right) - C\left(1 - \exp\left(-\frac{1}{C}\right)\left(\frac{1}{C} + 1\right)\right). \numberthis
    \end{align*}

    \emph{Case 3, $t > t_0$ and $D > Ct$:}
    \begin{align*}
      \frac{1}{I_D(-t)}
      &\geq \int_{2t}^{\infty}\exp(-s)\left(\frac{s}{t} - 1\right)^Dds \\
      &= \exp(-2t) \int_0^{\infty}\exp(-s) \left(1 + \frac{s}{t}\right)^Dds \\
      &\geq \exp(-2t) \int_0^{\infty}\exp(-s) \left(1 + \frac{s}{t}\right)^{Ct} ds \\
      &\geq \exp(-2t) \int_t^{t + 1}\exp(-s) \left(1 + \frac{s}{t}\right)^{Ct} ds \\
      &\geq \exp(-2t) \cdot \exp(-t - 1) 2^{Ct} \\
      &= \exp\left(t(C\log(2) - 3) - 1\right)
        \intertext{so, so long as $C > 3 / \log(2)$, we have}
      &\geq \exp\left(t_0(C\log(2) - 3) - 1\right). \numberthis
    \end{align*}

    Concretely, taking $t_0 = C = 5$ and evaluating our lower bounds in the three cases gives that, for all $D \geq 0$ even and $t \in \RR$, $I_D(t) \leq 99$.
    Plugging into our original calculations, we find that
    \begin{equation}
        \log \frac{\exp^{\leq D}(x + y)}{\exp^{\leq D}(x)} \leq 100 |y|,
    \end{equation}
    and the result follows.
\end{proof}

\subsection{Binomial coefficients: Proof of Proposition~\ref{prop:binom-lb}}
\label{app:pf:prob:binom-lb}

\begin{proof}[Proof of Proposition~\ref{prop:binom-lb}]
    We first expand directly:
    \begin{align*}
  \binom{k}{t}
  &= \frac{1}{t!} k(k - 1) \cdots (k - t + 1) \\
  &= \frac{k^t}{t!} 1 \left(1 - \frac{1}{k}\right) \cdots \left(1 - \frac{t - 1}{k}\right) \\
      &= \frac{k^t}{t!}\exp\left(\sum_{i = 0}^{t - 1}\log\left(1 - \frac{i}{k}\right)\right)
        \intertext{and for the remaining sum, note that $\log(1 - x) \geq -2x$ for all $0 \leq x \leq \frac{1}{2}$, so}
      &\geq \frac{k^t}{t!}\exp\left(-\frac{2}{k}\sum_{i = 0}^{t - 1}i\right) \\
      &\geq \frac{k^t}{t!}\exp\left(-\frac{t^2}{k}\right), \numberthis
    \end{align*}
    as claimed.
\end{proof}

\subsection{Polynomial advantage in spiked tensor model: Proof of Proposition~\ref{prop:kwb-stm-2}}
\label{app:pf:prop:kwb-stm-2}

\begin{proof}[Proof of Proposition~\ref{prop:kwb-stm-2}]
    We give the argument for the case $\pi = \Unif(\{\pm 1\})$, which is easily generalized to other bounded distributions for the prior entries.
    By the proof in Section~3.1.1 of \cite{KWB-2022-LowDegreeNotes}, we have
    \begin{align*}
      \Adv_{\leq D - 2}(\sX_n, \sP)
      &\leq \sum_{d = 0}^{D - 2} \left(\lambda^2 q^{q/2 + 1} (2D)^{(q - 2)/2}\right)^d
        \intertext{By taking $b_q$ large enough, since we assume $\lambda \geq b_q D^{-(q - 2)/4}$, we may ensure the quantity being raised to powers is greater than 2. Thus,}
      &\leq 2\left(\lambda^2 q^{q/2 + 1} (2D)^{(q - 2)/2}\right)^{D - 2}. \numberthis
    \end{align*}
    On the other hand, by the proof in Section 3.1.2 of \cite{KWB-2022-LowDegreeNotes}, we have
    \begin{equation}
      \Adv_{\leq D}(\sX_n, \sP) \geq \left(\lambda^2 e^{-q} q^{q/2} D^{(q - 2)/2}\right)^D,
    \end{equation}
    and combining the two inequalities gives the result upon taking $b_q$ large enough.
\end{proof}

\end{document}